\documentclass[10pt, reqno]{amsart}

\usepackage[utf8]{inputenc} 
\usepackage[T1]{fontenc}    
\usepackage{hyperref}       
\usepackage{url}            
\usepackage{booktabs}       
\usepackage{amsfonts}       
\usepackage{nicefrac}       
\usepackage{microtype}      
\usepackage{lipsum}
\usepackage{amsthm}
\usepackage{amssymb}
\usepackage{amscd}
\usepackage{amsmath}
\usepackage{amsmath}
\usepackage[pdftex]{color,graphicx}
\usepackage{setspace}
\usepackage{cancel}
\usepackage{xcolor}
\usepackage[margin=2.5cm]{geometry}
\usepackage{comment}

\newtheorem{thm}{Theorem}[section]
\newtheorem{defi}[thm]{Definition}
\newtheorem{cor}[thm]{Corollary}
\newtheorem{prop}[thm]{Proposition}
\newtheorem{lem}[thm]{Lemma}
\newtheorem{rmk}[thm]{Remark}

\begin{document}

\title{On singular problems in nonreflexive fractional Orlicz-Sobolev spaces}

	\author{Marcos L. M. Carvalho$^1$ }
	\address{$^{1}$Instituto de Matem\'atica e Estat\'istica, Universidade Federal de Goiás, Goiânia, Brasil,  74690-900.}
	\email{marcos\_leandro\_carvalho@ufg.br}
    \thanks{The first author was also partially supported by CNPq with the grant  300411/2025-1.}

	\author{Luana C. M. Lima$^2$}
	\address{$^{2}$ Departamento de Matemática, Universidade Federal de Roraima, Boa Vista, RR, Brasil, 69310-000.}
	\email{luana.lima@ufrr.br}
    \thanks{The second author was also partially supported by CAPES with the grant 88882.386249/2019-01.}

\author{Carlos A. P. Santos$^3$}
	\address{$^{3}$Departamento de Matem\'atica, Universidade de Brasília, Brasília, DF, Brasil, 70910-900.}
	\email{csantos@unb.br}
    \thanks{The third author was also partially supported by CNPq with the grant  311562/2020-5, and FAPDF under the grant 00193.00001133/2021-80.}
	
	\author{Maxwell L. Silva$^{4}$}
	\address{$^{4}$Instituto de Matem\'atica e Estat\'istica, Universidade Federal de Goiás, Goiânia, Brasil,  74690-900.}
	\email{maxwell@ufg.br}

\begin{abstract}
In this work, we deal with existence and uniqueness of positive solution $u_s$ for the singular quasilinear problem $(-\Delta_{\Phi})^su=u^{-\gamma}$ in the nonreflexive fractional Orlicz-Sobolev $ W^{s}_0L^{\Phi}(\Omega)$ for $0<s<1$. Furthermore, we show that $u_s$ converges in $L^{\Phi}(\Omega)$ to the unique positive solution $u\in W^{1}_0L^{\Phi}(\Omega)$ of the problem $-\Delta_{\Psi}u=u^{-\gamma}$ as $s \uparrow 1$, where $\Psi$ is an appropriate $N$-function equivalent to the $N$-function $\Phi$. The main difficulties to obtain existence of weak solutions for both singular quasilinear problems are that their associate energy functionals may not be well-defined on their whole natural workspaces due to the lack of the reflexivity and the presence of the singular term. To overcome these difficulties, we will use the minimization method and present a new approach to building appropriate test functions to prove that the problems have positive minimizers that we showed to be weak solutions of them, respectively. 
\end{abstract}

\keywords{fractional Orlicz-Sobolev, singular, nonreflexive, Orlicz space} 

\maketitle


\section{Introduction}

In this work, let us prove existence and uniqueness  of weak solutions for the classes of weak singular problems ($0<\gamma<1$)
\begin{equation}\label{Pa}\tag{$P_{s,\Phi}$}
\displaystyle \left\{\begin{array}{lcr} (-\Delta_\Phi)^{s}u = u^{-\gamma}, \ \  \text{in}  \ \  \Omega,  \\ 
\displaystyle u > 0, \ \ \text{in}  \ \ \ \ \ \ \ \ \ \ \ \  \Omega,  \\ 
\displaystyle u=0, \ \ \text{in}  \ \ \ \ \ \mathbb{R}^N\setminus \Omega, 
\end{array}
\right.
\end{equation} 
and 
\begin{equation}\label{Psilaplacian}\tag{$P_{\Phi}$}
\displaystyle \left\{\begin{array}{lcr} -\Delta_\Phi u = u^{-\gamma}, \ \  \text{in}  \ \  \Omega,  \\ 
\displaystyle u > 0, \ \ \text{in}  \ \ \ \ \ \ \ \ \ \ \ \  \Omega,  \\ 
\displaystyle u=0, \ \ \text{in}  \ \ \ \ \ \mathbb{R}^N\setminus \Omega, 
\end{array}
\right.
\end{equation}
where $\Omega\subset\mathbb{R}^N$ is an open bounded  domain with Lipschitz boundary, $N \geq 1$, $0<s<1$, and $\Phi$ is an $N$-function whose functional space generated may not be reflexive anymore. Beside these, we establish an appropriate $N$-function ${\Psi}$, equivalent to the $N$-function $\Phi$, in order to conclude that the problem $(P_{\Psi})$ can be seen as an $s$-limit problem of the family of problems $(P_{s,\Phi})$, as $0<s\to 1$, in some sense.


With the aim to include more information and become more realistic some mathematical models, based on partial differential equations,  sometimes makes sense to consider functional workspaces other than the classical Lebesgue and Sobolev spaces. One manner to do this is working in the Orlicz and Orlicz-Sobolev spaces. To a brief introduction here (more details can be seen in Section 2), let us consider an $N$-function $\Phi:\mathbb{R} \to \mathbb{R}$ to remind that
$$\displaystyle L^\Phi(\Omega)=\left\{u \in L^1_{loc}(\Omega); \int\limits_{\Omega}\Phi\left(\frac{|u|}{\lambda}\right)\mathrm{d}x < +\infty, \ \text{for some} \ \lambda>0 \right\}$$
is well known as the Orlicz space,  under the Luxemburg norm
\begin{equation*}
\displaystyle||u||_{\Phi} = \inf \left\{\lambda >0; \int\limits_{\Omega}\Phi \left( \frac{|u(x)|}{\lambda} \right)\mathrm{d}x \leq 1 \right\},
\end{equation*}
and
$$\displaystyle W^1L^\Phi(\Omega)=\left\{ u \in W^{1,1}_{loc}(\Omega); u~\mbox{and }|\nabla u| \in L^\Phi(\Omega) \right\}$$
is the Orlicz-Sobolev space with the norm  

\begin{equation*}
\displaystyle ||u||_{1,\Phi}=||u||_\Phi +  \sum\limits_{i=1}^{N}||\partial_i u||_\Phi,
\end{equation*}
that leads to the space $W_0^1L^\Phi(\Omega)$ defined as the weak$^*$ closure of $C_0^{\infty}(\Omega)$ in the $W^1L^\Phi(\Omega)$.

Recently, Orlicz and Orlicz-Sobolev spaces have caught  attention of a number of mathematical researchers.  The interest readers may visit, for instance, the following references R. A. Adams \cite{Adams1}, N. Fukagai, M. Ito and K. Narukawa \cite{PositiveSol}, Kufner, Alois, Oldrich John, and Svatopluk Fucik \cite{Kufner}, M. A. Krasnosel’ski, I. A. B. Rutkkii \cite{Kras} and Rao, M.M., Ren, Z.D \cite{Rao} to be introduced to this subject. On the context of solving partial differential  equations, whose natural frameworks are Orlicz and Orlicz-Sobolev spaces, we quote, for instance, the works of M. L. M. Carvalho, J. V. Gonçalves and E. D. da Silva \cite{CarvalhoGoncalvesSilva} and M. Mihailescu and V. Radulescu \cite{Radulescu1, Radulescu2}  that considered special conditions on the $N$-function $\Phi$ that lead the functional space $W^1_0 L^\Phi(\Omega)$  to be reflexive so that the use of some classical tools are still possible. 

Although the generality of problems that can be solved by using the framework of Orlicz-Sobolev spaces, we have some mathematical models that require adjustments on these workspaces to better represent the real-world situations. See, for instance, real models coming from  financial system \cite{Cont}, ultra-relativistic limits of quantum mechanics \cite{Fefferman}, semipermeable membranes and flame propagation \cite{Caffarelli}, image processing \cite{Gilboa}, and others.

In this sense, it was introduced by J. F. Bonder and A. M. Salort \cite{FBS} the  fractional version of the Orlicz Sobolev space, defined as
$$\displaystyle W^s L^\Phi (\Omega)= \left\{u \in L^\Phi(\Omega) ; \iint\limits_{\mathbb{R}^N \times \mathbb{R}^N}\Phi \left(\frac{|u(x)-u(y)|}{|x-y|^s} \right)\frac{\mathrm{d}x \mathrm{d}y}{|x-y|^N}< +\infty \right\},~0<s<1,$$
with the norm
\begin{equation*}
||u||_{s,\Phi}=||u||_\Phi +[u]_{s,\Phi},
\end{equation*}
where $$\displaystyle [u]_{s,\Phi}:=\inf \left\{\lambda >0; (1-s)\iint\limits_{\mathbb{R}^N \times \mathbb{R}^N}\Phi \left(\frac{|D_s u|}{\lambda} \right)\mathrm{d}\mu \leq 1  \right\}$$ is the well known \textbf{$(s,\Phi)$-Gagliardo seminorm}, that naturally leads to the space $W_0^sL^\Phi(\Omega)$ defined as the weak$^*$ closure of $C_0^\infty(\Omega)$ in the $W^sL^\Phi(\Omega)$. Naturally, we are considering  functions $u$ extended by $zero$  on $\mathbb{R}^N \setminus \Omega$ in the definition of $W^s L^\Phi (\Omega)$.

As a consequence of above definitions, by taking $\Phi(t)=t^p$, $t>0$, the space $W^s L^\Phi (\Omega)$ can be seen as a generalization of the classical Fractional Sobolev space 
$$\displaystyle W^{s,p}(\Omega)= \left\{u \in L^p(\Omega); \frac{u(x)-u(y)}{|x-y|^{\frac{N}{p}+s}} \in L^p(\mathbb{R}^N \times \mathbb{R}^N) \right\},$$
under the norm 
\begin{equation*}
\displaystyle ||u||_{s,p} = \left(||u||^p_p + [u]^p_{s,p} \right)^{\frac{1}{p}}  ,  
\end{equation*}
where $\displaystyle ||\cdot||_p$ is the norm of the Lebesgue space and $$\displaystyle [u]_{s,p}:=\left[ \ \iint\limits_{\mathbb{R}^N \times \mathbb{R}^N} \frac{|u(x)-u(y)|^p}{|x-y|^{N+sp}}\mathrm{d}x\mathrm{d}y \right]^{\frac{1}{p}}$$ is the \textbf{$(s,p)$- Gagliardo seminorm}. The interested reader may visit the paper of  E. Di Nezza, G. Palatucci, E. Valdinoci \cite{guia} to found rich details, properties and applications about the functional space $ W^{s,p}(\Omega)$.  

Under the framework of  Orlicz-Sobolev space it is natural to consider partial differential equations modeled by the $\Phi$-Laplacian operator 
$$\displaystyle \langle-\Delta_\Phi u,v \rangle  = \int\limits_{\mathbb{R}^N} \phi(|\nabla u|)\nabla u \nabla v \mathrm{d}x \ \mbox{for} ~u, v \in W^1 L^\Phi (\mathbb{R}^N),$$
while the fractional Orlicz-Sobolev space becomes the natural workspace for approaching problems driven by the fractional $\Phi$-Laplacian operator, defined as
\begin{eqnarray*}
    \begin{array}{lcl}
    \displaystyle (-\Delta_\Phi)^s u(x) &=& \displaystyle p.v. (1-s) \int\limits_{\mathbb{R}^N}\phi \left(\frac{|u(x)-u(y)|}{|x-y|^s} \right)\frac{u(x)-u(y)}{|x-y|^s}\frac{\mathrm{d}y}{|x-y|^N} \\
    
    &=& \displaystyle (1-s)\lim_{\varepsilon \rightarrow 0} \int\limits_{\mathbb{R}^N \backslash B_\varepsilon (0)}\phi \left(\frac{|u(x)-u(y)|}{|x-y|^s} \right)\frac{u(x)-u(y)}{|x-y|^s}\frac{\mathrm{d}y}{|x-y|^N} ,    
    \end{array}
\end{eqnarray*}
where $\displaystyle B_\varepsilon (0)$ is the  open ball centered at the origin of $\mathbb{R}^N$ with radius $\varepsilon$, and  $\phi:(0,\infty)\to (0,\infty)$ is such that 
$$\phi(t)t=\Phi'(t), \ t \in \mathbb{R},$$
that leads to its more manageable form
$$\displaystyle \langle(-\Delta_\Phi)^su,v \rangle = \frac{(1-s)}{2}\iint\limits_{\mathbb{R}^N \times \mathbb{R}^N}\phi\left(\frac{|u(x)-u(y)|}{|x-y|^s} \right)\frac{u(x)-u(y)}{|x-y|^s}\frac{v(x)-v(y)}{|x-y|^s}\frac{\mathrm{d}x \mathrm{d}y}{|x-y|^N} \  \mbox{for}~u, v \in W^sL^\Phi(\mathbb{R}^N),$$
for each $0<s<1$. Problems involving such kind of operators have been considered, for instance, in  S. Bahrouni, H. Ounaies and L. S. Tavares \cite{Sabri1}  in the context of $W^s_0 L^\Phi (\Omega)$ be still reflexive.

About problems driven by $\Phi-$Laplacian operator on non-reflexive Orlicz-Sobolev spaces, there are some results on the literature. We can quote Gossez \cite{gossez1, gossez2} who considered the homogeneous problem using Monotonicity Methods, García-Huidobro, M. et al. \cite{GarciaHuidobro} that worked eigenvalue problems using the non-smooth Lagrange multipliers technique, Mustonen and Tienari \cite{Mustonen}, Silva, Gon\c calves and Silva \cite{EGS} that used Minimization methods. Moreover, Alves, Pimenta and Silva \cite{AlvesPimentaSilva}, Alves, Bahrouni and Carvalho \cite{AlvesCarvalho}, Alves and Carvalho \cite{AlvesCarvalho2}, Silva, Carvalho, Gonçalves and Silva \cite{Silva2CarvalhoGoncalves} found critical points to the Energy functional via Mountain Pass Theorem, and Santos and Soares \cite{JeffersonMonari} treated optimal design problems using a penalization technique and a truncated minimization problems in terms of the Taylor polynomial of $N$-function $\Phi$. However, there are few works on literature approaching problems governed by the fractional $\Phi$-Laplacian operator with $\Phi$ being an $N$-function that generates its associate fractional Orlicz-Sobolev space being non-reflexive. It seems that Salort and Vivas in \cite{Salort} were the first authors to work on this direction by approaching an eigenvalue problem using the generalized Lagrange multipliers methods.

The main goal of this paper is to consider the non-homogeneous problem  $(P_{s,\Phi})$ (similar to $(P_{\Phi})$) under an $N$-function $\Phi$ that may lead its associate functional space $W^s_0L^\Phi(\Omega)$ to be non-reflexive so that its energy functional may assume infinity values on parts of the domain preventing us to approach this problem by the use of classical variational arguments. In addition, the presence of the singular term leads the energy functional to be non-differentiable even on the parts of the domain where the energy functional takes finite values. 

To overcome these difficulties, we present a new approach, inspired on ideas from Yjing \cite{Yijing}, and in a technique of constructing carefully test functions that belongs simultaneously to the part of the domain where the energy functional is finite and still permit us to show the existence of the Gateaux derivative at  minimum points of the energy functional. Besides these, by some fine estimates we are able to build an $N$-function $\Psi$ equivalent to $\Phi$ to show that the problem $(P_{\Psi})$ is an $s$-limit problem of $(P_{s,\Phi})$ as $s\to 1$, in some sense. 

Before enunciating our main results, let us remind that the $N$-function $\Phi: \mathbb{R} \rightarrow \mathbb{R}$ is of the type 
$\Phi'(t)=\phi(\vert t\vert )t, \ t \in \mathbb{R}$ where $\phi:(0,\infty)\to (0,\infty)$ is a function that, in addition, should satisfy
$$
t \mapsto t\phi(t); \quad t>0 \;\; \quad \mbox{increasing;}\leqno{(\phi_1)}
$$

$$
\lim_{t \to 0}t\phi(t)=0, \quad  \lim_{t \to +\infty}t\phi(t)=+\infty \leqno{(\phi_2)}
$$

In particular, it follows from $(\phi_1)$ and $(\phi_2)$, that
$$\Phi(t)=\int_{0}^{\vert t \vert}\phi(\tau)\tau\mathrm{d}\tau,~t \in \mathbb{R},$$
 is an N-function.

We already know that the reflexivity of Orlicz-Sobolev space $W_0^1L^\Phi(\Omega)$  and of the fractional Orlicz-Sobolev space $W_0^sL^\Phi(\mathbb{R}^N)$ are strongly connected with the property of the $N$-function $\Phi$
 to behave as
$$\Phi(2t)\leq K\Phi(t), \forall t \geq 0$$
that is well-known as the $N$-function $\Phi$ satisfying the $\Delta_2$-condition, or shortly,  $\Phi \in \Delta_2$. More specifically, the reflexivity of Orlicz-Sobolev space $W_0^1L^\Phi(\Omega)$  and of the fractional Orlicz-Sobolev space $W_0^sL^\Phi(\mathbb{R}^N)$ are equivalent to the fact the the $N$-function $\Phi$ and its \textbf{conjugate N-function}  $$\displaystyle \tilde{\Phi}(s) = \sup\{s t-\Phi(t); t>0 \}$$
should satisfy the $\Delta_2$-condition, or shortly,  $\Phi, \displaystyle \tilde{\Phi} \in \Delta_2$ (see Proposition \ref{reflexividade} below). So, our main interest in this paper is when at least one of the $N$-function  $\Phi$  and $\displaystyle \tilde{\Phi}$ does not satisfy the $\Delta_2$-condition.

To complete our overview to state our main results, just remains to make clear the meaning of weak solutions throughout this paper. First, we set the weak solution for the problem \eqref{Pa}.
\begin{defi}
		A function $u=u_s\in W^s_0 L^\Phi(\Omega)$ is said to be a \textbf{weak solution of \eqref{Pa}} if 
		$$\frac{(1-s)}{2}\iint\limits_{\mathbb{R}^N \times \mathbb{R}^N}\phi\left(\frac{|u(x)-u(y)|}{|x-y|^s} \right)\frac{u(x)-u(y)}{|x-y|^s}\frac{v(x)-v(y)}{|x-y|^s}\frac{\mathrm{d}x \mathrm{d}y}{|x-y|^N}=\int\limits_{\Omega}u^{-\gamma}vdx,~\mbox{for all}~v\in W^s_0 L^\Phi(\Omega).$$
\end{defi}
Finally, a weak solution to the problem \eqref{Psilaplacian} is understood as below.
\begin{defi}
		A function $u\in W^1_0 L^\Phi(\Omega)$ is said to be a \textbf{weak solution of \eqref{Psilaplacian}} if 
		$$\int\limits_{\Omega} \phi(|\nabla u|)\nabla u \nabla v \mathrm{d}x =\int\limits_{\Omega}u^{-\gamma}vdx,~\mbox{for all}~v\in W^1_0 L^\Phi(\Omega).$$
\end{defi}

Now, we are ready to state our first result. 

\begin{thm}\label{pricipal-1}
Assume that $0<s, \gamma<1$, and $(\phi_1)-(\phi_2)$ hold. Then Problem \eqref{Pa} admits a unique positive weak solution $u:=u_s \in W_0^s L^\Phi (\Omega)$. 
\end{thm}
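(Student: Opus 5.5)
We look for the solution as a minimizer of the energy
$$I_s(u)=(1-s)\iint\limits_{\mathbb{R}^N\times\mathbb{R}^N}\Phi\left(\frac{|u(x)-u(y)|}{|x-y|^s}\right)\frac{\mathrm{d}x\,\mathrm{d}y}{|x-y|^N}-\frac{1}{1-\gamma}\int\limits_\Omega |u|^{1-\gamma}\,\mathrm{d}x .$$
It is defined on $W^s_0L^\Phi(\Omega)$ with values in $(-\infty,+\infty]$, and it is finite exactly on the modular domain $D=\{u: \text{the modular is finite}\}$. The singular term is finite everywhere, because $0<1-\gamma<1$, $\Omega$ is bounded, and $L^\Phi(\Omega)\hookrightarrow L^1(\Omega)$.

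\textbf{Existence of a minimizer.} First show coercivity and a lower bound. Since $\Phi$ is superlinear and we have a Poincaré-type inequality in $W^s_0L^\Phi(\Omega)$, the modular dominates $[u]_{s,\Phi}$ once $[u]_{s,\Phi}\ge1$. The singular term grows at most like $\|u\|_{1}^{1-\gamma}\le C\|u\|_\Phi^{1-\gamma}$. Hence $I_s$ is bounded below and minimizing sequences $(u_n)$ are bounded. Since $W^s_0L^\Phi(\Omega)$ is nonreflexive, we do not use weak compactness. Instead we use the weak$^*$ topology: $L^\Phi$ is the dual of $E^{\tilde\Phi}$, and the space is the weak$^*$ closure of $C^\infty_0$. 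This gives, up to a subsequence, $u_n\to u$ weak$^*$, $D_su_n\to D_su$ weak$^*$ in $L^\Phi(\mu)$, and $u_n\to u$ a.e. and in $L^1(\Omega)$ by the compact embedding into $L^1$. The modular is weak$^*$ lower semicontinuous, by convexity or by Fatou via a.e. convergence of $D_su_n$, and the $L^{1-\gamma}$ term converges. So $u$ is a minimizer.

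Replacing $u$ by $|u|$ does not increase $I_s$, so we may take $u\ge0$. Testing $I_s(t\varphi)<0$ for small $t$ with $\varphi\in C^\infty_0$, $\varphi\ge0$, uses the fact that $\Phi(t)/t\to0$ as $t\to 0$, by $(\phi_2)$, while $t^{1-\gamma}$ dominates. This shows $u\not\equiv0$.

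\textbf{Euler--Lagrange equation (main obstacle).} $I_s$ is not Gateaux differentiable on the whole space, for two reasons. First, $u+tv$ may leave $D$ because $\Phi\notin\Delta_2$. Second, the singular term has derivative $u^{-\gamma}$, which needs $u>0$.

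We proceed in the spirit of Yijing \cite{Yijing}. First, compare $u$ with a subsolution. The solution $w$ of a truncated problem, or $\epsilon\varphi_1$-type functions, satisfies $w\ge c\,\mathrm{dist}(x,\partial\Omega)^{\kappa}>0$, and a comparison argument gives $u\ge w>0$ locally in $\Omega$. In practice, one shows directly from minimality that for $\psi\ge0$, $\psi\in D$, the one-sided derivative inequality
$$\frac{1-\gamma}{t}\left(\int (u+t\psi)^{1-\gamma}-u^{1-\gamma}\right)\cdot\frac{1}{1-\gamma}\le \frac{\text{modular}(u+t\psi)-\text{modular}(u)}{t}$$
holds. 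Letting $t\downarrow0$ with Fatou on the left and convexity/dominated convergence on the right, which is valid for $\psi$ with $\lambda\psi\in D$ for some $\lambda>1$, yields $\int u^{-\gamma}\psi\le\langle(-\Delta_\Phi)^su,\psi\rangle$. In particular $u^{-\gamma}\psi\in L^1$.

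For the reverse inequality we build test functions: $\psi_\epsilon=(u+\epsilon v)^+$ for $v\in C_0^\infty$, and more generally truncations $T_k$ of $v\in W^s_0L^\Phi$. These satisfy $\lambda\psi_\epsilon\in D$ because $|D_s(u+\epsilon v)^+|\le|D_su|+\epsilon|D_sv|$ and $v$ is bounded with a finite modular at a larger multiple. Using the splitting $\{u+\epsilon v\le0\}$ and dividing by $\epsilon$, as in the classical Canino/Yijing argument, we obtain $\langle(-\Delta_\Phi)^su,v\rangle\ge\int u^{-\gamma}v$. Replacing $v$ by $-v$ gives equality for smooth $v$. Extending to all $v\in W^s_0L^\Phi(\Omega)$ requires approximating $v$ by smooth $v_n$ that are weak$^*$ convergent and modular convergent, with $|v_n|$ controlled. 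The left side is handled because $\phi(|D_su|)D_su\in L^{\tilde\Phi}$: by Young's equality, $\tilde\Phi(\phi(t)t)\le\Phi(2t)$-type bounds on $D$, which needs care. The right side is handled by the inequality already obtained together with dominated convergence. This density and integrability step is the delicate point of the argument.

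\textbf{Uniqueness.} If $u_1,u_2$ are solutions, test both equations with $u_1-u_2$, which is admissible in the definition, and subtract:
$$\frac{1-s}{2}\iint\big(\phi(|D_su_1|)D_su_1-\phi(|D_su_2|)D_su_2\big)(D_su_1-D_su_2)\,d\mu=\int_\Omega(u_1^{-\gamma}-u_2^{-\gamma})(u_1-u_2)\le0.$$
The left side is nonnegative by the strict monotonicity of $t\mapsto\phi(t)t$ from $(\phi_1)$. Hence $D_su_1=D_su_2$ a.e., and so $u_1=u_2$.
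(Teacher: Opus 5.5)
There is a genuine gap in your Euler--Lagrange step, and it sits exactly where the failure of $\Delta_2$ matters.

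\textbf{The gap.} You compare $I_s(u+t\psi)$ with $I_s(u)$, and later use competitors $(u+\epsilon v)^+$. Nothing guarantees that these lie in the effective domain $D$.

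\emph{Perturbations of $u$ may leave $D$.} From $|D_s(u+t\psi)|\le|D_su|+t|D_s\psi|$ you would need $\Phi(|D_su|+t|D_s\psi|)\in L^1(d\mu)$. For $\Phi(t)=e^{t^2}-1$, the ratio $\Phi(a+b)/\Phi(a)$ is unbounded in $a$ for every fixed $b>0$. So finiteness of $\iint\Phi(|D_su|)\,d\mu$ gives no control on this integral. If $u+t\psi\notin D$ for all small $t>0$, the minimality inequality reads ``$\cdots\le+\infty$'' and yields nothing.

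\emph{Your proviso is about the wrong function.} The condition ``$\lambda\psi\in D$ for some $\lambda>1$'' concerns only $\psi$. What is actually needed is room around $u$ itself, e.g.\ $\lambda u\in D$, and that is not available. Your claim $\lambda\psi_\epsilon\in D$ silently uses this.

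\emph{The energy identity is not available either.} The Canino--Yijing cancellation needs $(1-s)\iint\phi(|D_su|)|D_su|^2\,d\mu=\int_\Omega u^{1-\gamma}\,dx$. Getting it by differentiating $t\mapsto I_s(tu)$ at $t=1$ requires $tu\in D$ for some $t>1$, which is the same obstruction.

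\emph{The dual integrability is not justified.} The bound $\widetilde\Phi(\phi(t)t)\le\Phi(2t)$ gives $\phi(|D_su|)|D_su|\in L^{\widetilde\Phi}$ only if $\Phi(2|D_su|)\in L^1(d\mu)$. That is again a $\Delta_2$-type property and fails on $D$ in general.

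\textbf{The missing ideas (this is how the paper proceeds).}

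(i) Compare $u$ only with $(1-\tfrac1n)u$, which lies in $D$ by convexity. The mean value theorem and Fatou give $(1-s)\iint\phi(|D_su|)|D_su|^2\,d\mu\le\int_\Omega u^{1-\gamma}\,dx$. Young's \emph{equality} $\phi(a)a^2=\Phi(a)+\widetilde\Phi(\phi(a)a)$ then yields $\iint\widetilde\Phi(\phi(|D_su|)|D_su|)\,d\mu<\infty$.

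(ii) Use competitors that are convex combinations with the weight on $u$ shrunk: $v_t=(1-2t)u+t(v-u)^+$ with $0\le v\in C_0^\infty(\Omega)$. For these, $|D_sv_t|\le(1-t)|D_su|+t|D_sv|$, so $v_t\in D$, and (i) supplies the dominating function. Letting $t\downarrow0$ gives $u>0$ a.e., $u^{-\gamma}v\in L^1$, and the variational inequality $(1-s)\iint\phi(|D_su|)D_su\,D_s(-2u+(v-u)^+)\,d\mu\ge\int_\Omega u^{-\gamma}(-2u+(v-u)^+)\,dx$.

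(iii) This inequality no longer involves the modular of a competitor. It is therefore extended by density to $0\le v\in W_0^sL^\Phi(\Omega)$ and tested with $v=4u$. This gives the reverse of (i), hence the energy identity.

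Only after (i)--(iii) does the Canino-type step with $[(1-3t)u+tv]^+$ go through. It is plugged into the linear inequality rather than into $I_s$, so no domain issue arises.

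Your existence and uniqueness parts essentially match the paper. The subsolution/comparison detour is unnecessary.
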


As a consequence of the above theorem, we establish a unique solution for the problem
\begin{equation*}
\displaystyle \left\{\begin{array}{lcr} \displaystyle  \ p.v. (1-s)\int\limits_{\mathbb{R}^N} (u(x)-u(y))e^{\frac{|u(x)-u(y)|^2}{|x-y|^{2s}}}\frac{\mathrm{d}y}{|x-y|^{N+s}}=u(x)^{-\gamma} \ \  \text{in}  \ \  \Omega;  \\ 
\displaystyle u > 0, \ \ \ \ \ \ \ \ \ \ \ \ \ \ \ \ \ \ \ \ \ \ \ \ \ \ \ \ \ \ \ \ \ \ \ \ \ \ \ \ \ \ \ \ \ \ \ \ \ \ \ \ \ \ \ \ \ \ \ \ \ \ \ \ \ \ \ \   \text{in}   \ \  \Omega;  \\ 
\displaystyle u=0, \ \ \ \ \ \ \ \ \ \ \ \ \ \ \ \ \ \ \ \ \ \ \ \ \ \ \ \ \ \ \ \ \ \ \ \ \ \ \ \ \ \ \ \ \ \ \ \ \ \ \ \ \ \ \ \ \ \  \text{in}  \ \ \ \ \ \mathbb{R}^N\setminus \Omega. 
\end{array}
\right.
\end{equation*}
in the context of the non-reflexive space  $W_0^s L^\Phi (\Omega)$, because its associate $N$-function 
$$\Phi(t)=\frac{{e^{t^2}-1}}{2},~t>0$$
satisfies $(\phi_1)-(\phi_2)$; however,
$$\ell := \inf_{t \geq 0} \frac{t^2 \phi(t)}{\Phi(t)} =2 ~~\mbox{and}~~m:= \sup_{t \geq 0} \frac{t^2 \phi(t)}{\Phi(t)}= \infty,$$
which implies that $\tilde{\Phi}\in \Delta_2$ but $\Phi \not\in \Delta_2$. See Proposition \ref{reflexividade}. 

By taking advantage of the closeness in the arguments, we also prove the below theorem.

\begin{thm}\label{pricipal-2}
Assume that $0< \gamma<1$, and $(\phi_1)-(\phi_2)$ hold. Then Problem \eqref{Psilaplacian} admits a unique positive weak solution $u \in W_0^1L^{\Phi}(\Omega)$. 
\end{thm}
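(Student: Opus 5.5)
We plan to follow the same minimization scheme as for Theorem \ref{pricipal-1}, now in the local setting $W^1_0L^\Phi(\Omega)$. Consider the energy functional
$$I(u)=\int_\Omega \Phi(|\nabla u|)\,\mathrm{d}x-\frac{1}{1-\gamma}\int_\Omega |u|^{1-\gamma}\,\mathrm{d}x,\qquad u\in W^1_0L^\Phi(\Omega),$$
which may take the value $+\infty$, and set $D(I)=\{u:\int_\Omega\Phi(|\nabla u|)<\infty\}$.

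\emph{Step 1 (minimizer).} Since $\Phi$ grows superlinearly, Poincaré's inequality in Orlicz spaces gives $\int_\Omega |u|^{1-\gamma}\le C\|u\|_{1}^{1-\gamma}\le C'(1+\|\nabla u\|_\Phi)^{1-\gamma}$. Because $\Phi$ is an $N$-function, the modular controls the norm linearly once the norm exceeds $1$, so $I$ is coercive and bounded below. Take a minimizing sequence $u_n\in D(I)$; we may assume $u_n\ge0$, since $I(|u|)\le I(u)$. The sequence is bounded in $W^1_0L^\Phi(\Omega)$. By Banach--Alaoglu in the dual pairing with $E^{\tilde\Phi}$, a subsequence converges weak$^*$, and by the compact embedding into $L^1$ it converges strongly in $L^1$ and a.e. The modular $\int\Phi(|\nabla u|)$ is weak$^*$ lower semicontinuous, being convex and a supremum of $\sigma(L^\Phi,E^{\tilde\Phi})$-continuous functionals. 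The singular term converges by Vitali, since $|u_n|^{1-\gamma}$ is uniformly integrable. Hence we obtain a minimizer $u\ge0$ with $I(u)<0$, because $I(t\varphi)<0$ for small $t$.

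\emph{Step 2 (Euler--Lagrange equation).} This is the main obstacle: $I$ is neither finite on open sets nor differentiable because of $u^{-\gamma}$. Following Yijing's idea, we first show $u>0$ and $\int_\Omega \phi(|\nabla u|)\nabla u\nabla v\ge\int_\Omega u^{-\gamma}v$ for all $v\ge0$ in a suitable class.
\begin{itemize}
\item For $v\ge0$ with $\Phi(|\nabla v|)$ integrable on $D(I)$, convexity gives $u+tv\in D(I)$ for $t\in(0,1]$.
\item The quotient $[\int\Phi(|\nabla(u+tv)|)-\int\Phi(|\nabla u|)]/t$ is monotone by convexity, with limit the derivative term.
\item Fatou applied to $[(u+tv)^{1-\gamma}-u^{1-\gamma}]/(t(1-\gamma))$ then yields the inequality, forces $u^{-\gamma}v\in L^1$, and hence $u>0$ a.e.
\end{itemize}

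To reach equality, we test with $(u+\varepsilon\varphi)^+$, where $\varphi\in C_0^\infty(\Omega)$ or, after approximation, $\varphi$ in the needed class. To keep the test function in $D(I)$, we use convex combinations: $(1-t)u+t(u+\varepsilon\varphi)^+$ stays in $D(I)$ whenever $u+\varepsilon\varphi$ has finite modular. This is the delicate construction, and it requires approximating the nonreflexive test space by truncations of $u$ and smooth functions.

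Next we use the identity obtained from minimality along $t\mapsto tu$ near $t=1$, which gives $\int\phi(|\nabla u|)|\nabla u|^2=\int u^{1-\gamma}$. We split $\Omega$ into $\{u+\varepsilon\varphi\ge0\}$ and its complement, divide by $\varepsilon$, and show the contribution of $\{u+\varepsilon\varphi<0\}$ vanishes as $\varepsilon\to0$, since its measure tends to $0$. This gives $\langle-\Delta_\Phi u,\varphi\rangle\ge\int u^{-\gamma}\varphi$. Replacing $\varphi$ by $-\varphi$ gives equality. Finally, we extend to all $v\in W^1_0L^\Phi(\Omega)$ by modular (weak$^*$) density of $C_0^\infty$, using that $\phi(|\nabla u|)|\nabla u|\in L^{\tilde\Phi}$ because $u\in D(I)$.

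\emph{Step 3 (uniqueness).} Let $u_1,u_2$ be two solutions. We test their difference of equations with $(u_1-u_2)^+$, which is admissible as a $W^1_0L^\Phi$ function. This gives
$$\int_\Omega\big(\phi(|\nabla u_1|)\nabla u_1-\phi(|\nabla u_2|)\nabla u_2\big)\cdot\nabla(u_1-u_2)^+=\int_\Omega(u_1^{-\gamma}-u_2^{-\gamma})(u_1-u_2)^+\le0.$$
Strict monotonicity of $\xi\mapsto\phi(|\xi|)\xi$, from $(\phi_1)$, forces $(u_1-u_2)^+=0$. By symmetry $u_1=u_2$.
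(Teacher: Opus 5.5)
Your Steps 1 and 3 are sound and essentially follow the paper: coercivity, weak$^*$ compactness with weak$^*$ lower semicontinuity of the modular, and monotonicity of $\xi\mapsto\phi(|\xi|)\xi$ for uniqueness. The gap is in Step 2. There you repeatedly move \emph{outward} from $u$ and assume you stay in $D(I)$, which is exactly what fails when $\Phi\notin\Delta_2$.

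Convexity gives $(1-t)u+tv\in D(I)$, not $u+tv\in D(I)$. Taking $v=u$ gives $(1+t)u$, and nothing prevents $\int_\Omega\Phi(\tau|\nabla u|)\,dx=+\infty$ for every $\tau>1$. The same can happen for $v\in C_0^\infty(\Omega)$, e.g.\ with $\Phi(t)=e^{t^2}-1$ and $\nabla v$ aligned with $\nabla u$ where $|\nabla u|$ is large.

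For the same reason, minimality along $t\mapsto tu$ only yields $\int_\Omega\phi(|\nabla u|)|\nabla u|^2\,dx\le\int_\Omega u^{1-\gamma}\,dx$, which is Lemma~\ref{Domin}. For $t>1$ one may have $I(tu)=+\infty$, so there is no information to the right of $t=1$, and your identity is unproved.

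Every admissible competitor has the form $(1-t)u+tw$ with $w\in D(I)$, so the direction $w-u$ always carries a $-u$ part. The paper works within this constraint in three steps:
\begin{enumerate}
\item It uses $v_t=(1-2t)u+t(v-u)^+$, which satisfies $|\nabla v_t|\le(1-t)|\nabla u|+t|\nabla v|$.
\item It handles the resulting sign-changing quotient of $v_t^{1-\gamma}$ by splitting $\Omega$ into $[v\le u]$, $[u<v<2u]$, $[v>2u]$. Your Fatou step with $u+tv$ has a nonnegative integrand and hides this difficulty.
\item It obtains the missing reverse inequality by extending \eqref{inequality-fund} to all nonnegative $v$ in the space and taking $v=4u$ (Corollary~\ref{igual}).
\end{enumerate}

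Both readings of your $(u+\varepsilon\varphi)^+$ step rest on something unavailable. Inserting it into a linear inequality needs the identity. Using the competitor $(1-t)u+t(u+\varepsilon\varphi)^+$ needs $(u+\varepsilon\varphi)^+\in D(I)$, which is the same outward move again. A repair that stays inside $D(I)$ is to use $((1-\varepsilon)u+\varepsilon\varphi)^+$, whose gradient is bounded by $(1-\varepsilon)|\nabla u|+\varepsilon|\nabla\varphi|$. This gives $A(\varphi)\ge\int_\Omega\phi(|\nabla u|)|\nabla u|^2\,dx-\int_\Omega u^{1-\gamma}\,dx$, where $A(\varphi)=\int_\Omega\phi(|\nabla u|)\nabla u\cdot\nabla\varphi\,dx-\int_\Omega u^{-\gamma}\varphi\,dx$ is linear. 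Replacing $\varphi$ by $\lambda\varphi$ for all real $\lambda$ then forces $A\equiv0$ on $C_0^\infty(\Omega)$.

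The final extension to all $v\in W_0^1L^\Phi(\Omega)$ also cannot rest on weak$^*$ density. Weak$^*$ convergence of $\nabla\varphi_n$ only tests against $E^{\tilde\Phi}$, whereas $\phi(|\nabla u|)\nabla u$ is in general only in $L^{\tilde\Phi}$. You need modular density of $C_0^\infty(\Omega)$ (Gossez), which yields $\sigma(L^\Phi,L^{\tilde\Phi})$ convergence of the gradients. Moreover, $\int_\Omega u^{-\gamma}\varphi_n\,dx\to\int_\Omega u^{-\gamma}v\,dx$ is not automatic for sign-changing approximants. This is why the paper first establishes the inequality for all nonnegative $v$ in the space, where Fatou suffices. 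Only then does it run the $[(1-3t)u+tv]^+$ argument directly for arbitrary $v\in W_0^1L^\Phi(\Omega)$.
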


To state our last result, let us point out that under the change of variable  $\rho=tr^{1-s}$,  we have well defined the limit 
\begin{equation}\label{Psi1}
\Psi(t)=\lim_{s \uparrow 1}(1-s)\int_0^1 \int_{\mathbb{S}^{N-1}} \Phi(t|z_N|r^{1-s})\mathrm{d}S_z \frac{\mathrm{d}r}{r}=\int_0^t \int_{\mathbb{S}^{N-1}} \Phi(\rho|z_N|)\mathrm{d}S_z \frac{\mathrm{d}\rho}{\rho},~t\in \mathbb{R},
\end{equation} 
where $z_N$ is the last coordinate of the vector $z=(z_1,...,z_N) \in \mathbb{R}^N$ and $\mathbb{S}^{N-1}$ is the unitary sphere centered at the origin of $\mathbb{R}^N$. We highlight that $\Psi$ is a strictly convex $N$-function  equivalent to the $\Phi$ that satisfies 
$$\Psi(t)\leq |\mathbb{S}^{N-1}|\int_0^t\frac{\Phi(\rho)}{\rho}d\rho,~t\in \mathbb{R}.$$
See section 5 for the proofs and details.

As a consequence of the last equality in \eqref{Psi1}, we have:
\begin{enumerate}
    \item[1)] $\Phi(t)=|t|^p, t \in \mathbb{R}^+$ leads to 
   $$\displaystyle \Psi(t) = \frac{k_{N,p}}{p} |t|^p,~t\in \mathbb{R},~\mbox{where }k_{N,p}:=\int\limits_{\mathbb{S}^{N-1}}|z_N|^p \mathrm{d}S_z$$ 
   whose associate operator is
   $$-\Delta_\Psi u=-k_{N,p}\mbox{div}(|\nabla u|^{p-2}\nabla u);$$
   
   \item[2)] for $\Phi(t)=t^p|log \ t|, t \in \mathbb{R}^+$ and $p>1$, then  
   $$\Psi(t)=\frac{t^p}{p}\left(k_{N,p}|log t|+k_{log,N,p}+\frac{k_{N,p}}{p}\right),~\mbox{where }k_{log,N,p}:=\int\limits_{\mathbb{S}^{N-1}}|z_N|^p|log|z_N||\mathrm{d}S_z,$$
 and 
   $$-\Delta_\Psi u =-k_{N,p}\mbox{div}(|\nabla u|^{p-2}|log|\nabla u||\nabla u)-(pk_{log,N,p}+2k_{N,p})\mbox{div}(|\nabla u|^{p-2}\nabla u);$$
   
   \item[3)] if $1<q<p$ and $\Phi(t)=\max\{t^p, t^q \}$, then we have
   \begin{equation*}
    \displaystyle 
    \Psi(t)=\left\{\begin{array}{lcr}\displaystyle \frac{k_{N,q}}{q} t^q, \ \ \ \ \  \ \ \ \ \ \ \ \ \ \  \ \ \ \ \  \ \ \ \ \  \ \ \ \ \  \ \ \ \ \  \ \ \ \ \ \text{if} \ t \leq 1;  \\ 
    \displaystyle \frac{t^q}{q}\int\limits_{|z_N|\leq \frac{1}{t}}|z_N|^q \mathrm{d}S_z + \frac{t^p}{p}\int\limits_{|z_N|>\frac{1}{t}}|z_N|^p \mathrm{d}S_z \\ 
    \displaystyle + \left(\frac{1}{q} - \frac{1}{p} \right) \int\limits_{|z_N|> \frac{1}{t}}\mathrm{d}S_z\ \ \ \ \  \ \ \ \ \  \ \ \ \ \  \ \ \ \ \ \  \text{if} \ t>1. 
        \end{array}
        \right.
    \end{equation*} 
 and
    \begin{equation*}
    \displaystyle 
    -\Delta_\Psi u=\left\{
    \begin{array}{lcr}
    \displaystyle -k_{N,q}\mbox{div}(|\nabla u|^{q-2}\nabla u),&&  \text{if} \ |\nabla u| \leq 1,  \\ 
    \displaystyle -\mbox{div}(\int\limits_{|z_N|\leq \frac{1}{|\nabla u|}}|z_N|^q \mathrm{d}S_z|\nabla u|^{q-2}\nabla u+\int\limits_{|z_N|> \frac{1}{|\nabla u|}}|z_N|^p \mathrm{d}S_z|\nabla u|^{p-2}\nabla u),&&\text{if} \ |\nabla u|>1;
        \end{array}
    \right.
    \end{equation*}
    
   \item[4)]  for $1<p<q$ and $\Phi(t)=|t|^p + |t|^q, t \in \mathbb{R}^+$, we conclude that
   $$\displaystyle \Psi(t) = \frac{k_{N,p}}{p} |t|^p+\frac{k_{N,q}}{q} |t|^q,~t\in \mathbb{R},$$
   where $k_{N,p}$ and $k_{N,q}$ were defined in $1)$, and
   $$-\Delta_\Psi u =-k_{N,p}\mbox{div}(|\nabla u|^{p-2}\nabla u)-k_{N,q}\mbox{div}(|\nabla u|^{q-2}\nabla u).$$
\end{enumerate}

So, we are ready to present a result that contributes in the understanding of the behavior of Problem \eqref{Pa} as $s \uparrow 1$. This kind of subject was considered, for instance, by Bonder and Salort in \cite{fractionalplaplace1} and Bonder, Salort, and Vivas in \cite{fractionalplaplace3} for limiting situations of the fractional $p$-Laplacian operator as $s \uparrow 1$, while Bonder, Silva, and Spedaletti in \cite{fractionalplaplace2}  presented a  behavior of 
eigenvalues problems to the fractional p-Laplacian operator as $s \uparrow 1$, by using the $\Gamma$-convergence method.

\begin{thm}\label{gammaconvergence}
Assume $0<s, \gamma<1$, $(\phi_1)-(\phi_2)$, and {$\Phi\in \Delta_2$}.
Let $u_s \in W_0^s L^\Phi(\Omega)$ be the unique weak solution to the problem \eqref{Pa}, for $0<s<1$, given by Theorem \ref{pricipal-1}. Then $\displaystyle u_s\to u$ in $L^\Phi(\Omega)$, when $s\ \uparrow 1$, where $u\in W_0^1L^\Phi(\Omega)$ is the unique weak solution of the problem $(P_{\Psi})$, where $\Psi$ was defined in \eqref{Psi1}.
\end{thm}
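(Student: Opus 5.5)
We plan to characterize $u_s$ variationally and pass to the limit in the minimization problems, in the spirit of $\Gamma$-convergence. By the proof of Theorem \ref{pricipal-1}, $u_s$ is the unique minimizer of
$$J_s(v)=(1-s)\iint_{\mathbb{R}^N\times\mathbb{R}^N}\Phi(|D_sv|)\,\mathrm{d}\mu-\frac{1}{1-\gamma}\int_\Omega (v^+)^{1-\gamma}\,\mathrm{d}x$$
on $W^s_0L^\Phi(\Omega)$. Likewise, by Theorem \ref{pricipal-2} applied with $\Psi$ in place of $\Phi$ (the $N$-function $\Psi$ satisfies $(\phi_1)$–$(\phi_2)$ by Section 5), the solution $u$ of $(P_\Psi)$ is the unique minimizer of
$$J(v)=\int_\Omega\Psi(|\nabla v|)\,\mathrm{d}x-\frac{1}{1-\gamma}\int_\Omega (v^+)^{1-\gamma}\,\mathrm{d}x$$
on $W^1_0L^\Phi(\Omega)=W^1_0L^\Psi(\Omega)$; this identification of spaces uses the equivalence $\Psi\sim\Phi$. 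The singular part is continuous under strong $L^1$ convergence since $0<1-\gamma<1$, so the argument reduces to three Bonder–Salort type ingredients for the Orlicz modular:
\begin{enumerate}
\item[(a)] a uniform bound;
\item[(b)] compactness with a liminf inequality;
\item[(c)] a limsup (recovery) inequality for smooth functions.
\end{enumerate}

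\textbf{Step 1 (uniform bounds).} Since $J_s(u_s)\le J_s(0)=0$, the modular $(1-s)\iint\Phi(|D_su_s|)\,\mathrm{d}\mu$ is at most $C\int u_s^{1-\gamma}\le C\|u_s\|_{L^1}^{1-\gamma}$. We need a Poincaré inequality uniform in $s\in[s_0,1)$, available from Bonder–Salort and using $\Phi\in\Delta_2$. Combined with the sublinear growth of the right-hand side, it gives $\sup_{s\ge s_0}(1-s)\iint\Phi(|D_su_s|)\,\mathrm{d}\mu<\infty$ and $\sup_s\|u_s\|_\Phi<\infty$.

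\textbf{Step 2 (compactness and liminf).} The Orlicz version of the Bourgain–Brezis–Mironescu compactness (Bonder–Salort, \cite{FBS}) then yields a sequence $s_k\uparrow1$ and a function $w\in W^1_0L^\Phi(\Omega)$ with $u_{s_k}\to w$ in $L^\Phi(\Omega)$; strong convergence in $L^\Phi$ is where $\Delta_2$ is used. It also yields the liminf inequality
$$\int_\Omega\Psi(|\nabla w|)\le\liminf_k (1-s_k)\iint\Phi(|D_{s_k}u_{s_k}|)\,\mathrm{d}\mu.$$
The constant in this inequality is exactly the one in \eqref{Psi1}. In polar coordinates, rotation invariance lets us align $\nabla w$ with the $e_N$ direction, producing the factor $|z_N|$. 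Moreover, $(1-s)\int_0^1\Phi(t|z_N|r^{1-s})\,\frac{\mathrm{d}r}{r}$ converges to $\int_0^t\Phi(\rho|z_N|)\,\frac{\mathrm{d}\rho}{\rho}$. Hence $J(w)\le\liminf_k J_{s_k}(u_{s_k})$.

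\textbf{Step 3 (limsup and identification).} For $v\in C_0^\infty(\Omega)$ we show $\lim_{s\uparrow1}(1-s)\iint\Phi(|D_sv|)\,\mathrm{d}\mu=\int_\Omega\Psi(|\nabla v|)$. This follows from a Taylor expansion $v(x)-v(y)\approx\nabla v(x)\cdot(x-y)$ near the diagonal together with dominated convergence: the far region contributes $O(1-s)$, and the near region yields \eqref{Psi1}. Minimality of $u_s$ gives $J_{s_k}(u_{s_k})\le J_{s_k}(v)\to J(v)$, so $J(w)\le J(v)$ for all smooth $v$. This inequality extends to all of $W^1_0L^\Phi(\Omega)$ by modular density of $C_0^\infty$ (valid since $\Phi\in\Delta_2$) and continuity of $J$. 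Therefore $w$ minimizes $J$, and uniqueness of the minimizer forces $w=u$. Since the limit is independent of the subsequence, the whole family converges: $u_s\to u$ in $L^\Phi(\Omega)$ as $s\uparrow1$.

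\textbf{Main obstacle.} The delicate point is Step 2: the liminf inequality with the precise anisotropic constant defining $\Psi$ rather than $\Phi$. It requires the Orlicz BBM-type lower semicontinuity, and we would handle it by mollification. Since $\Psi$ is convex, Jensen's inequality shows that mollifying $u_s$ does not increase the nonlocal modular, which reduces the liminf to the smooth recovery computation in Step 3. Step 3 itself also needs care: the limit in \eqref{Psi1} is a pointwise computation, and turning it into the integral statement needs a uniform (dominated) bound on the near-diagonal integrand.
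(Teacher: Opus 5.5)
Your proposal is correct and follows essentially the same $\Gamma$-convergence route as the paper. Both use a uniform modular bound from $I(u_s)<0$ together with the $s$-uniform Poincaré inequality, the Bonder--Salort compactness and liminf theorem, and convergence of minimizers, with the limit identified through uniqueness for $(P_\Psi)$. The differences are refinements rather than a new route: you write out the fundamental theorem of $\Gamma$-convergence by hand, you need the limsup computation only for $C_0^\infty(\Omega)$ functions plus density, and you use $(v^+)^{1-\gamma}$ so that the minimizer is genuinely unique. One small slip: in your last paragraph the Jensen argument for mollification uses convexity of $\Phi$, not $\Psi$.
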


The study of singular problems of the type ($P_{\Phi}$) is not new under the assumption that $\Phi$ satisfies the $\Delta_2$-condition. See, for instance, Carvalho, M. L., Goncalves, J. V., Silva, E. D., and Santos, C. A. P. \cite{CarvalhoGoncalvesSilvaSantos}, where the authors proved  existence and uniqueness of solutions for a quasilinear elliptical problem that may be singular at the origin, and also proved a comparison principle. However, the main novelty in Theorem \ref{pricipal-2} is the fact that the $N$-function $\Phi$ may not satisfy the $\Delta_2$-condition so that the arguments to overcome this difficulty need to be news and other than the classical ones used when $\Phi$ satisfies the $\Delta_2$-condition. It seems that this work is the first one to approach the difficulties that come from both the singularity and the lack of the $\Delta_2$-condition.

In the sequel, let us highlight the main contributions to the literature of this paper:
\begin{enumerate}
    \item[1)] the singularity in the problems ($P_{s,\Phi}$) and ($P_{\Phi}$) may be twofold in the sense that one comes from the presence of the term $u^{-\gamma}$ with $\gamma>0$ and the other one comes from the lack of the $\Delta_2$-condition that leads the energy functional be not finite in some parts of the domain, 
 \item[2)]  despite of the lack of the regularity of the associated energy functional and the restrictions required to establish its domain, we are still able to apply Variational method, 
    
    \item[3)] we carefully construct test functions that belong to the effective domain $D_{\Phi}$ (see definition in Section 3), to demonstrate the validity of the weak formulation of solutions for the problems, 
    
    \item[4)] with the help of the above building of test functions, we establish a new approach to prove that a local minimum in $D_\Phi \cap D_{\tilde{\Phi}}$ (see definitions of $D_\Phi$ and $D_{\tilde{\Phi}}$ later) is a weak solution for problems involving operator and singular non-linearities, when the effective domain is a convex and proper subset.

    \item[5)] Theorem \ref{pricipal-2} as well as Theorem \ref{pricipal-1} are new in the context of obtaining solutions for non-local \eqref{Pa} and local \eqref{Psilaplacian}  problems when the fractional Orlicz-Sobolev and Orlicz-Sobolev spaces are not reflexive, while Theorem \ref{gammaconvergence}  is also new for the same reason and because it involves the singular term.
\end{enumerate}

This work is structured as follows: Initially we present a historical context, motivations of our work and also the statement of our main results. In Section 2, we present some concepts and properties to ease the reading of the readers. In the subsequent section, we prove the first two theorems of this work through a sequence of lemmas and propositions, and in the last section we prove Theorem \ref{gammaconvergence}, based on ideas from \cite{FBS}.

\section{About the workspace}

To ease the reading of the readers, we are going to present on this section the main definitions and properties, related to the our functional workspaces. Beside these, let us state and prove two results that will be very useful in our approach. We begin with the Orlicz Spaces.

\subsection{Orlicz spaces}

In this work consider $\Omega \subset \mathbb{R}^N$ be a bounded domain. Also consider $\phi: (0,+\infty) \rightarrow (0,+\infty)$ be a function satisfying the hypotheses $(\phi_1)$ and $(\phi_2)$. So, it is well known that  
 $$\displaystyle\Phi(t):=\int_{0}^{|t|}\phi(s)s \mathrm{d}s,~t \in \mathbb{R},$$
defines an $N$-function. To it, the \textbf{conjugate N-function}  is defined by 
$$\displaystyle \widetilde{\Phi}(s) = \sup\{st-\Phi(t); t>0 \},~ s \in \mathbb{R}$$
that imply { $\widetilde{\widetilde{\Phi}} = \Phi$ } and the \textbf{Young's inequality} 
$$st \leq \Phi(t)+ \widetilde{\Phi}(s),$$ where the equality holds  (\textbf{Young's equality})  if, and only if, $s=\tilde{\phi}(t)t:=\widetilde{\Phi}'(t)$  or $t=\phi(s)s$ with $\tilde{\phi}$ satisfying 
$$
   \tilde{\Phi}(t)=\int_0^{\vert t \vert} \tilde{\phi}(\tau)\tau\mathrm{d}\tau,~t \in \mathbb{R}. 
$$

An N-function $\Phi$ satisfies the \textbf{$\Delta_2$-condition} if for $K>1$ we have 
\begin{equation}\label{delta0}
\Phi(2t)\leq K\Phi(t), \forall t \geq 0,
\end{equation}
and, in particular, if \eqref{delta0} is satisfied to all $t\geq t_0$, for some $t_0>0$, we say that $\Phi$ satisfies the \textbf{$\Delta_2(\infty)$-condition at infinity}, in short, $\Phi \in \Delta_2(\infty)$. 

\begin{prop}[See \cite{Rao}]\label{thm21}
Let $\Phi$ and $\tilde{\Phi}$ be conjugated N-functions. Then the following statements are equivalents:
\begin{itemize}
    \item[(i)] $\Phi \in \Delta_2(\infty)$;
    \item[(ii)] there exist $1<m<+\infty$ and $t_0>0$ such that $$\displaystyle \frac{t^2\phi(t)}{\Phi(t)}\leq m, \ t \geq t_0;$$
    \item[(iii)] there exist $1<\widetilde{\ell}<+\infty$ and $t_0 \geq 0$ such that $$\displaystyle \widetilde{\ell} \leq \frac{t^2\tilde{\phi}(t)}{\tilde{\Phi}(t)}, \ t \ge t_0.$$
\end{itemize}
\end{prop}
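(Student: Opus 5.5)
We prove the cycle (i)$\Rightarrow$(ii)$\Rightarrow$(i) and then (ii)$\Leftrightarrow$(iii), using only the convexity of $\Phi$, the monotonicity in $(\phi_1)$, and the Young equality between $\Phi$ and $\tilde\Phi$.

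\emph{(i)$\Rightarrow$(ii).} Write $\Phi(t)=\int_0^t \tau\phi(\tau)\,d\tau$. Since $\tau\phi(\tau)$ is increasing,
$$\Phi(2t)\ge\int_t^{2t}\tau\phi(\tau)\,d\tau\ge t\cdot t\phi(t)=t^2\phi(t).$$
If $\Phi(2t)\le K\Phi(t)$ for $t\ge t_0$, this gives $t^2\phi(t)/\Phi(t)\le K$ for $t\ge t_0$. We may take $m=\max\{K,2\}>1$.

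\emph{(ii)$\Rightarrow$(i).} The bound in (ii) reads $(\log\Phi)'(t)=\Phi'(t)/\Phi(t)\le m/t$ for $t\ge t_0$. Integrating from $t$ to $2t$ gives $\Phi(2t)\le 2^m\Phi(t)$ for all $t\ge t_0$, which is the $\Delta_2(\infty)$-condition.

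\emph{(ii)$\Leftrightarrow$(iii).} We use the inverse-function relation between the derivatives. Set $a(t)=t\phi(t)=\Phi'(t)$ and $\tilde a(s)=s\tilde\phi(s)=\tilde\Phi'(s)$. These are mutually inverse, at least in the generalized sense of right-inverses when $a$ is only nondecreasing. Young's equality gives
$$\tilde\Phi(a(t))=t\,a(t)-\Phi(t).$$
Take $s=a(t)$, so that $t=\tilde a(s)$. Then
$$\frac{s^2\tilde\phi(s)}{\tilde\Phi(s)}=\frac{s\,\tilde a(s)}{\tilde\Phi(s)}=\frac{t\,a(t)}{t\,a(t)-\Phi(t)}=\frac{r(t)}{r(t)-1},\qquad r(t):=\frac{t^2\phi(t)}{\Phi(t)}.$$
The map $x\mapsto x/(x-1)$ is a decreasing bijection of $(1,\infty)$ onto itself. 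Note that $r(t)>1$ always, since $\Phi(t)<t\,a(t)$ by strict monotonicity. Hence $r\le m$ is equivalent to $\tilde r\ge \tilde\ell$ with $\tilde\ell=m/(m-1)>1$, and conversely $m=\tilde\ell/(\tilde\ell-1)$. Moreover, $t\ge t_0$ corresponds to $s\ge a(t_0)$, because $a$ is increasing and tends to $\infty$ by $(\phi_2)$. So the thresholds transfer.

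\emph{Main obstacle.} The delicate point is this reparametrization when $a$ is only monotone rather than a continuous bijection. Jumps of $a$ or flat pieces produce values $s$ not of the form $a(t)$. For such $s$, with $a(t^-)\le s\le a(t^+)$, we still have $\tilde a(s)=t$ and Young's equality still holds. The ratio identity then becomes the inequality
$$\frac{s\,t}{s\,t-\Phi(t)}\ \ge\ \frac{m}{m-1},$$
which follows from $s\,t\le a(t^+)\,t\le m\,\Phi(t)$. The last bound uses (ii) applied to right limits, which holds by continuity of $\Phi$. The converse direction is handled symmetrically, using $\tilde{\tilde\Phi}=\Phi$.
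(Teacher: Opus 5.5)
The paper gives no proof of this proposition; it is quoted from Rao--Ren, so there is no in-paper argument to compare against. Your proof is correct and self-contained, and it takes a different route from the usual textbook one. That route passes through the $\nabla_2$-condition. One first proves $\Phi\in\Delta_2(\infty)\iff\tilde\Phi\in\nabla_2(\infty)$ by manipulating Young's inequality at the level of the functions themselves. One then characterizes $\Delta_2(\infty)$ by $\limsup_{t\to\infty}t\Phi'(t)/\Phi(t)<\infty$ and $\nabla_2(\infty)$ by $\liminf_{t\to\infty}t\tilde\Phi'(t)/\tilde\Phi(t)>1$.

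You bypass $\nabla_2$ altogether. Young's equality gives the pointwise identity $\tilde r(a(t))=r(t)/(r(t)-1)$, where $\tilde r(s):=s^2\tilde\phi(s)/\tilde\Phi(s)$. This links the two indices directly and even yields the sharp correspondence $\tilde\ell=m/(m-1)$ between the constants. The price is generalized-inverse bookkeeping when $a=\Phi'$ is not a homeomorphism. If $\phi$ is continuous and $t\phi(t)$ strictly increasing (the paper tacitly uses $\Phi\in C^1$), your main computation is already complete.

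There are two small points of precision in the nonsmooth part; they are clarifications, not gaps.

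\emph{Parametrization in (ii)$\Rightarrow$(iii).} Parametrize from the $s$ side: set $t:=\tilde a(s)$. This always gives $a(t^-)\le s\le a(t^+)$, and hence Young's equality. Your claim that $\tilde a(s)=t$ for every $s\in[a(t^-),a(t^+)]$ can fail at $s=a(t^+)$ when $a$ is constant just to the right of $t$. Note also that flat pieces of $a$ create jumps of $\tilde a$, not gaps in the range of $a$.

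\emph{The converse (iii)$\Rightarrow$(ii).} Exchanging $\Phi$ and $\tilde\Phi$ in your argument would turn an upper bound on $\tilde r$ into a lower bound on $r$. That is not the implication needed. Instead, rerun the computation from the $t$ side:
\begin{itemize}
\item With $s=a(t)$ one has $\tilde a(s^-)\le t$ and $\Phi(t)=st-\tilde\Phi(s)$.
\item Letting $\sigma\uparrow s$ in (iii) gives $st\ge s\,\tilde a(s^-)\ge\tilde\ell\,\tilde\Phi(s)$.
\item Since $x\mapsto x/(x-\tilde\Phi(s))$ is decreasing, $r(t)=st/(st-\tilde\Phi(s))\le\tilde\ell/(\tilde\ell-1)$.
\end{itemize}
This holds for all $t$ with $a(t)$ strictly above the threshold in (iii), so that the left limits stay in the range where (iii) holds.
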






We define the \textbf{Orlicz class} of the N-function $\Phi$ as the following convex set
$$\displaystyle \mathcal{L}_\Phi(\Omega)=\left\{u:\Omega \longrightarrow \mathbb{R} \ \text{mensurable}; \int\limits_\Omega \Phi(u(x))\mathrm{d}x <+\infty \right\}$$
that is a  vector space if and only if $\Phi \in \Delta_2(\infty)$, while the \textbf{Orlicz space} is defined by 
$$L^{\Phi}(\Omega)=\left\{u: \Omega \longrightarrow \mathbb{R} \ \ \text{mensurable}; \int\limits_{\Omega}\Phi\left(\frac{|u|}{\lambda}\right)\mathrm{d}x<+\infty,~\mbox{for some }\lambda>0 \right\}$$
that is a Banach space, under the \textbf{Luxemburg norm} 
\begin{equation*}\label{normaorlicz}
\displaystyle||u||_{\Phi} = \inf \left\{\lambda >0; \int\limits_{\Omega}\Phi \left( \frac{|u(x)|}{\lambda} \right)\mathrm{d}x \leq 1 \right\},
\end{equation*}
continuously embedded into $L^1(\Omega)$, see \cite{Kufner}. Orlicz space $L^{\Phi}(\Omega)$ is the smallest vector
space containing $\mathcal{L}_{\Phi}(\Omega)$, that is,
$\mathcal{L}_\Phi(\Omega) \subseteq L^\Phi(\Omega)$
with the equality holding if and only if $\Phi \in \Delta_2$. Furthermore, the space $E^{\Phi}(\Omega):=\overline{C_0^{\infty}(\Omega)}^{L^{\Phi}(\Omega)}$ is such that 
$E^{\Phi}(\Omega)=L^{\Phi}(\Omega)$ if and only if $\Phi \in \Delta_2(\infty).$
Summing up, we have $$E^\Phi(\Omega) \subset \mathcal{L}_\Phi(\Omega) \subseteq L^\Phi(\Omega) \subseteq L^1(\Omega)$$
with the three equalities occurring simultaneously
 if and only if $\Phi \in \Delta_2(\infty)$. 
 
An important consequence of Young’s inequality is \textbf{Hölder inequality} $$\displaystyle \left|\int\limits_{\Omega}uv \mathrm{d}x \right| \leq 2||u||_{\Phi}||v||_{\widetilde{\Phi}}, \forall u \in L^{\Phi}(\Omega), \forall v \in L^{\widetilde{\Phi}}(\Omega).$$

\begin{prop}\label{LPhicontinuousL1}(See \cite{Kufner})
Assume that $\Omega \subset \mathbb{R}^N$ has finite Lebesgue messure. Then  Orlicz Space $L^\Phi(\Omega)$ is continuous embedding into $L^1(\Omega)$.
\end{prop}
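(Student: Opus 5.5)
The plan is to show directly that for every $u\in L^\Phi(\Omega)$ one has $\|u\|_{L^1(\Omega)}\le C\|u\|_\Phi$, with $C$ depending only on $\Phi$ and $|\Omega|$. Since both spaces are normed and the inclusion is linear, this inequality gives the continuous embedding. The natural tool is the Hölder inequality stated above, $\left|\int_\Omega uv\,\mathrm{d}x\right|\le 2\|u\|_\Phi\|v\|_{\widetilde{\Phi}}$, applied with a test function $v$ chosen so that $|v|=1$.

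First, fix $u\in L^\Phi(\Omega)$ and set $v=\operatorname{sgn}(u)$, which is measurable and satisfies $|v|\le 1$ on $\Omega$. Then $\int_\Omega |u|\,\mathrm{d}x=\int_\Omega uv\,\mathrm{d}x\le 2\|u\|_\Phi\|v\|_{\widetilde\Phi}$. It remains to check that $v\in L^{\widetilde\Phi}(\Omega)$ with a uniform bound. Since $\widetilde\Phi$ is increasing in $|t|$, we have $\int_\Omega\widetilde\Phi(|v|/\lambda)\,\mathrm{d}x\le \widetilde\Phi(1/\lambda)\,|\Omega|$. Because $\widetilde\Phi$ is an $N$-function, it is continuous, increasing, and satisfies $\widetilde\Phi(t)\to 0$ as $t\to 0$. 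Hence we may choose $\lambda_0>0$ with $\widetilde\Phi(1/\lambda_0)|\Omega|\le 1$; this is where finiteness of $|\Omega|$ is used. By the definition of the Luxemburg norm, $\|v\|_{\widetilde\Phi}\le\lambda_0$, so $\|u\|_1\le 2\lambda_0\|u\|_\Phi$.

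A Hölder-free alternative serves as a sanity check. By Jensen's inequality applied to the normalized measure $\mathrm{d}x/|\Omega|$, together with $\int_\Omega\Phi(|u|/\lambda)\,\mathrm{d}x\le1$ for every $\lambda>\|u\|_\Phi$, one gets $\Phi\big(\tfrac{1}{|\Omega|}\int|u|/\lambda\big)\le 1/|\Omega|$. This gives $\|u\|_1\le |\Omega|\,\Phi^{-1}(1/|\Omega|)\,\lambda$, and letting $\lambda\downarrow\|u\|_\Phi$ yields the bound.

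The only delicate point is that $v$ must genuinely belong to $L^{\widetilde\Phi}(\Omega)$ even when $\widetilde\Phi\notin\Delta_2$. The argument above handles this, because it only needs integrability of a bounded function on a set of finite measure, and no growth condition enters. Either route gives the embedding constant explicitly in terms of $|\Omega|$ and $\Phi$ alone, without any $\Delta_2$ hypothesis, which is what the nonreflexive setting requires.
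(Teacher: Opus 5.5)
Your proof is correct. The paper gives no argument of its own here (it only cites Kufner et al.), and your main route is the standard proof behind that citation: H\"older's inequality against a bounded function, which lies in $L^{\widetilde\Phi}(\Omega)$ with $\|\chi_\Omega\|_{\widetilde\Phi}=1/\widetilde\Phi^{-1}(1/|\Omega|)$ precisely because $|\Omega|<\infty$. One small point in your Jensen variant: a priori $\int_\Omega|u|\,\mathrm{d}x$ might be infinite, so either apply Jensen to $\min\{|u|,k\}$ and let $k\to\infty$, or first note $t\le\Phi(t)+T$ for all $t\ge0$ (with $T$ such that $\Phi(t)\ge t$ for $t\ge T$) to get $u\in L^1(\Omega)$.
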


More important facts on $L^\Phi(\Omega)$.
\begin{prop}\label{reflexividade}(See \cite{Kras})
\begin{enumerate}
\item[($i$)] $E^\Phi(\Omega)$ is a closed subspace of $L^\Phi(\Omega)$ (hence a Banach space itself). More, $E^\Phi(\Omega)$ is
separable. 
    \item[$ii$)] Orlicz space $L^\Phi(\Omega)$ is reflexive if and only if $\Phi, \tilde{\Phi} \in \Delta_2(\infty)$,
     \item[$iii$)] $L^\Phi(\Omega)$ is separable if $\Phi \in \Delta_2(\infty)$.
\end{enumerate}
\end{prop}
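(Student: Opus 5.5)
Since this proposition is quoted from \cite{Kras}, I would only sketch how the three items are obtained from the definitions. The plan uses the duality $(E^\Phi(\Omega))^*\cong L^{\tilde\Phi}(\Omega)$ together with the characterization $E^\Phi(\Omega)=L^\Phi(\Omega)\iff\Phi\in\Delta_2(\infty)$ recalled above.

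\emph{Item (i).} $E^\Phi(\Omega)$ is defined as a norm closure in $L^\Phi(\Omega)$, so it is a closed subspace and hence a Banach space. For separability, I would first take $u\in C_0^\infty(\Omega)$ and approximate it uniformly by step functions with rational values on dyadic cubes with rational vertices. Since $|\Omega|<\infty$, uniform convergence on a set of finite measure implies convergence in the Luxemburg norm. Indeed, if $\sup|u-v|\le\varepsilon$, then $\|u-v\|_\Phi\le \varepsilon/\Phi^{-1}(1/|\Omega|)$. This countable family is dense in $C_0^\infty(\Omega)$ for $\|\cdot\|_\Phi$, and therefore dense in $E^\Phi(\Omega)$.

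\emph{Item (iii).} This follows at once from (i), because $\Phi\in\Delta_2(\infty)$ gives $E^\Phi(\Omega)=L^\Phi(\Omega)$.

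\emph{Item (ii), sufficiency.} The first step is the duality $(E^\Phi)^*\cong L^{\tilde\Phi}$ with equivalent norms. Each $v\in L^{\tilde\Phi}$ acts on $E^\Phi$ by $u\mapsto\int uv$, and this is bounded by the Hölder inequality. Conversely, a bounded functional on $E^\Phi$ restricted to characteristic functions defines an absolutely continuous measure. Its Radon--Nikodym density $v$ lies in $L^{\tilde\Phi}$; to see this, test the functional on truncations of $\tilde\phi(|v|)|v|\,\mathrm{sgn}\, v$ and use Young's equality. Now assume $\Phi,\tilde\Phi\in\Delta_2(\infty)$. Then $E^\Phi=L^\Phi$ and $E^{\tilde\Phi}=L^{\tilde\Phi}$, so $(L^\Phi)^{**}=(L^{\tilde\Phi})^*=L^\Phi$. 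One checks that this identification is the canonical embedding, since both are given by $\int uv$. Hence $L^\Phi(\Omega)$ is reflexive.

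\emph{Item (ii), necessity when $\Phi\notin\Delta_2(\infty)$.} I would pick $t_k\to\infty$ with $\Phi(2t_k)>2^k\Phi(t_k)$, and disjoint sets $A_k\subset\Omega$ with $\Phi(t_k)|A_k|=2^{-k}$. The functions $f_k=t_k\chi_{A_k}$ then have modular $2^{-k}$, while $\int\Phi(2f_k)\,\mathrm{d}x>1$. The key claim is that $(a_k)\mapsto\sum_k a_kf_k$ is an isomorphic embedding of $\ell^\infty$ into $L^\Phi(\Omega)$, with norm comparable to $\sup_k|a_k|$. The upper bound comes from $\sum_k 2^{-k}\le1$. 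The lower bound comes from a single block: a coefficient $|a_k|\ge 2\lambda$ forces the modular of $\sum a_jf_j/\lambda$ to exceed $1$. Since $\ell^\infty$ is not reflexive and closed subspaces of reflexive spaces are reflexive, $L^\Phi(\Omega)$ is not reflexive.

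\emph{Item (ii), necessity when $\tilde\Phi\notin\Delta_2(\infty)$.} The same construction shows that $L^{\tilde\Phi}(\Omega)$ is not reflexive. If $L^\Phi(\Omega)$ were reflexive, its closed subspace $E^\Phi(\Omega)$ would be reflexive, and so would its dual $L^{\tilde\Phi}(\Omega)$, which is a contradiction.

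I expect the main obstacle to be the two-sided norm estimate in the $\ell^\infty$ embedding. It requires choosing the $t_k$ so that the modular blow-up under doubling holds uniformly. A secondary obstacle is the Radon--Nikodym step in the duality, where truncation is needed to guarantee that the test functions lie in $E^\Phi$.
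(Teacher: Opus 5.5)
The paper gives no proof of this proposition; it only cites Krasnosel'skii--Rutickii. So the comparison is with the standard textbook argument, and your sketch is correct.

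The pieces check out as follows:
\begin{itemize}
\item The estimate $\|w\|_\Phi\le\varepsilon/\Phi^{-1}(1/|\Omega|)$ for $|w|\le\varepsilon$ gives (i).
\item (iii) follows from $E^\Phi=L^\Phi$ under $\Delta_2(\infty)$.
\item Sufficiency in (ii) is the usual duality $(E^\Phi)^*\cong L^{\tilde\Phi}$.
\item Your map $T(a)=\sum_k a_kf_k$ does satisfy $\tfrac12\|a\|_\infty\le\|Ta\|_\Phi\le\|a\|_\infty$. The lower bound is exactly your one-block computation, so the ``main obstacle'' you anticipate is already settled by the choice $\Phi(2t_k)>2^k\Phi(t_k)$.
\end{itemize}

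Where you genuinely differ is the necessity direction. The common textbook route first shows that $L^\Phi$ is non-separable when $\Phi\notin\Delta_2(\infty)$, using essentially the same disjoint blocks $t_k\chi_{A_k}$ indexed by subsets of $\mathbb{N}$. It then argues that a reflexive $L^\Phi=(E^{\tilde\Phi})^*$ would have the separable dual $E^{\tilde\Phi}$, and would therefore itself be separable. Your $\ell^\infty$-embedding avoids the theorem ``separable dual implies separable'' and gives a stronger structural fact, which also yields the converse of (iii). The textbook route, in turn, reuses the duality already needed for sufficiency.

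Two bookkeeping points should be stated explicitly:
\begin{itemize}
\item Take the $t_k$ large enough that $\sum_k 2^{-k}/\Phi(t_k)\le|\Omega|$, so that the disjoint sets $A_k$ fit inside $\Omega$.
\item The paper defines $E^\Phi$ as the closure of $C_0^\infty(\Omega)$, so the Radon--Nikodym step needs $\chi_A\in E^\Phi$. This holds because bounded measurable functions on $\Omega$ are a.e.\ limits of uniformly bounded functions in $C_0^\infty(\Omega)$. Dominated convergence then gives $\int_\Omega\Phi(|\cdot|/\lambda)\,dx\to0$ for every $\lambda>0$. For (i) you can instead note that a subset of a separable metric space is separable, so your rational step functions need not lie in $E^\Phi$.
\end{itemize}
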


Using the same ideas of \cite{Salort} and \cite{FN2}, we can prove the following lemma.
\begin{lem}
Assume $(\phi_1)$ and $(\phi_2)$ hold. Then
$$\displaystyle ||u||_\Phi \to +\infty\qquad\Rightarrow\qquad \int\limits_\Omega \Phi(|u|)\mathrm{d}x\to +\infty , \ \ \ \forall u \in W^s L^\Phi(\Omega).$$
If, in addition, $\Phi\in \Delta_2$ holds, then 
$$\min\{||u||_{\Phi}^{\ell}, ||u||_{\Phi}^{m} \} \leq \int\limits_\Omega \Phi(|u|)\mathrm{d}x \leq \max\{||u||_{\Phi}^{\ell}, ||u||_{\Phi}^{m} \}, \ \forall u \in L^\Phi(\Omega).$$   
\end{lem}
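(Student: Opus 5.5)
The first claim follows from convexity of $\Phi$ together with the definition of the Luxemburg norm; the second needs a pointwise power-type comparison for $\Phi$. Here $\ell$ and $m$ are the constants $\ell=\inf_{t>0} t^2\phi(t)/\Phi(t)$ and $m=\sup_{t>0} t^2\phi(t)/\Phi(t)$ from the introduction, with $m<\infty$ under $\Delta_2$ by Proposition \ref{thm21}.

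\textbf{First part.} Fix $u$ with $\|u\|_\Phi>1$ and set $\lambda=\|u\|_\Phi$. For any $1<\mu<\lambda$, the definition of the Luxemburg norm gives $\int_\Omega \Phi(|u|/\mu)\,dx>1$. Convexity of $\Phi$ with $\Phi(0)=0$ yields $\Phi(t/\mu)\le \Phi(t)/\mu$ for $\mu\ge 1$. Hence
$$\int_\Omega\Phi(|u|)\,dx\ \ge\ \mu\int_\Omega\Phi(|u|/\mu)\,dx\ >\ \mu .$$
Letting $\mu\uparrow\lambda$ gives $\int_\Omega\Phi(|u|)\,dx\ge \|u\|_\Phi$, which tends to $+\infty$. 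If the integral is already $+\infty$ there is nothing to prove. Only $(\phi_1)$–$(\phi_2)$ are used, through the fact that $\Phi$ is an $N$-function.

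\textbf{Second part.} The key step is the pointwise comparison
$$\min\{\sigma^\ell,\sigma^m\}\Phi(t)\ \le\ \Phi(\sigma t)\ \le\ \max\{\sigma^\ell,\sigma^m\}\Phi(t),\qquad \sigma,t>0 .$$
To prove it, fix $t>0$ and set $g(\sigma)=\log\Phi(\sigma t)$. Then
$$g'(\sigma)=\frac{\sigma t^2\phi(\sigma t)}{\Phi(\sigma t)}\cdot\frac1\sigma \in \Big[\frac{\ell}{\sigma},\frac{m}{\sigma}\Big].$$
Integrating $g'$ between $1$ and $\sigma$, separately for $\sigma\ge1$ and $\sigma\le1$, gives the two inequalities. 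This requires $m<\infty$ globally, i.e.\ the quotient bound of Proposition \ref{thm21} for all $t>0$; I read $\Phi\in\Delta_2$ as providing exactly that. The lower bound only needs $\ell\ge1$, which always holds since $t\phi(t)$ is increasing, so $\Phi(t)\le t\cdot t\phi(t)$.

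Now let $u\neq0$ and $\lambda=\|u\|_\Phi$. Under $\Delta_2$ the modular is continuous in the scaling parameter, by dominated convergence using $\Phi(2t)\le K\Phi(t)$. Therefore the infimum defining the norm is attained with equality: $\int_\Omega\Phi(|u|/\lambda)\,dx=1$. Apply the pointwise comparison with $\sigma=\lambda$ and $t=|u(x)|/\lambda$, then integrate over $\Omega$. This gives
$$\min\{\lambda^\ell,\lambda^m\}\ \le\ \int_\Omega\Phi(|u|)\,dx\ \le\ \max\{\lambda^\ell,\lambda^m\},$$
which is the claim.

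\textbf{Main obstacle.} The only delicate point is the identity $\int_\Omega\Phi(|u|/\|u\|_\Phi)\,dx=1$. It needs finiteness of the modular for every scaling of $u$, which is exactly what $\Delta_2$ guarantees; without $\Delta_2$ only the inequality $\le 1$ is available. Everything else reduces to the one-variable logarithmic-derivative computation above.
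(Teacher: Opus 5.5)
Your proof is correct and is essentially the standard argument the paper relies on; the paper gives no proof of its own and only defers to the literature. The argument has two parts: the convexity bound $\int_\Omega\Phi(|u|)\,dx\ge\|u\|_\Phi$ for $\|u\|_\Phi>1$, and the Fukagai--Ito--Narukawa comparison $\min\{\sigma^\ell,\sigma^m\}\Phi(t)\le\Phi(\sigma t)\le\max\{\sigma^\ell,\sigma^m\}\Phi(t)$ combined with $\int_\Omega\Phi(|u|/\|u\|_\Phi)\,dx=1$.

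The one step you leave implicit is that the global $\Delta_2$ condition gives $m<\infty$ for all $t>0$, whereas Proposition~\ref{thm21} only covers $t\ge t_0$. This follows from $t\Phi'(t)\le\int_t^{2t}\Phi'(\tau)\,d\tau\le\Phi(2t)\le K\Phi(t)$, using that $\Phi'(t)=t\phi(t)$ is increasing.
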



Below, let us recall Orlicz-Sobolev
space.

\subsection{Orlicz-Sobolev space}
As we know, \textbf{Orlicz-Sobolev space} generated by the $N$-function $\Phi$ is defined by 
$$\displaystyle W^1L^\Phi(\mathbb{R}^N)=\left\{ u \in W^{1,1}_{loc}(\mathbb{R}^N); u~\mbox{and }|\nabla u| \in L^\Phi(\mathbb{R}^N) \right\}$$
with the norm
\begin{equation*}
\displaystyle ||u||_{1,\Phi}=||u||_\Phi +  \sum\limits_{i=1}^{N}||\partial_i u||_\Phi,
\end{equation*}
and $W_0^1L^\Phi(\Omega)$ as the weak$^*$ closure of $C_0^\infty(\Omega)$ in $W^1L^\Phi(\Omega)$. In this space, the modular
Poincar\'e’s Inequality
$$\int_\Omega\Phi(|u|)dx\leq \int_\Omega\Phi(d|\nabla u|)dx,~\forall u\in W_0^1L^\Phi(\Omega),$$
holds, where $d=\mbox{diam}(\Omega)$, and consequently 
$$\|u\|_\Phi\leq 2d\|\nabla u\|_\Phi,~\forall u\in W_0^1L^\Phi(\Omega),$$
which implies that the functional $\|\cdot\|:=\|\nabla \cdot\|$ defines an equivalent norm on $W_0^1L^\Phi(\Omega)$.

Some important facts.
\begin{prop}[See \cite{Adams2}]
We have the following:
\begin{enumerate}
    \item[$i)$] Orlicz-Sobolev space $W_0^1L^\Phi(\Omega)$ is reflexive if and only if $\Phi, \tilde{\Phi} \in \Delta_2(\infty)$,
\item[$ii)$] $W^1_0L^\Phi(\Omega)=\overline{C_0^\infty(\Omega)}^{\|\|_{1,\Phi}}$ if $\Phi \in \Delta_2(\infty)$,
      \item[$iii)$] $W_0^1L^\Phi(\Omega)$ is separable if $\Phi \in \Delta_2(\infty)$.
\end{enumerate}
\end{prop}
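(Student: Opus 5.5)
Throughout, I would use the map $T:W^1L^\Phi(\Omega)\to (L^\Phi(\Omega))^{N+1}$, $Tu=(u,\partial_1u,\dots,\partial_Nu)$. With the norm $\|\cdot\|_{1,\Phi}$ this is an isometry onto its range. The range is norm closed: a Cauchy sequence $(u_n,\nabla u_n)$ converges in $L^\Phi$, hence in $L^1$ by Proposition \ref{LPhicontinuousL1}, and weak derivatives pass to $L^1$ limits. The restriction to $W_0^1L^\Phi(\Omega)$ is still a closed subspace, because a weak$^*$ closed set is norm closed. So every statement about $W_0^1L^\Phi(\Omega)$ will be reduced to a statement about closed subspaces of $(L^\Phi(\Omega))^{N+1}$, combined with Proposition \ref{reflexividade}.

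\emph{Item $iii)$.} If $\Phi\in\Delta_2(\infty)$, then $L^\Phi(\Omega)=E^\Phi(\Omega)$ is separable by Proposition \ref{reflexividade}$(i)$. Hence $(L^\Phi)^{N+1}$ is separable, and so is its subspace $T(W_0^1L^\Phi(\Omega))$.

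\emph{Item $i)$.} If $\Phi,\tilde\Phi\in\Delta_2(\infty)$, then $L^\Phi(\Omega)$ is reflexive by Proposition \ref{reflexividade}$(ii)$. A finite product of reflexive spaces is reflexive, and a closed subspace of a reflexive space is reflexive, so $W_0^1L^\Phi(\Omega)$ is reflexive.

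For the converse I would embed a copy of $L^\Phi$ of an interval into $W_0^1L^\Phi(\Omega)$.
\begin{enumerate}
\item[1)] Fix a cube $I\times Q'\Subset\Omega$ and a cutoff $\eta\in C_0^\infty(Q')$ with $\eta\not\equiv0$.
\item[2)] Let $X=\{f\in L^\Phi(I):\int_I f=0\}$. For $f\in X$ set $F(x_1)=\int_{a}^{x_1}f$, which vanishes at both endpoints of $I$, and define $Sf(x)=F(x_1)\eta(x')$.
\item[3)] Then $\partial_1 Sf=f\,\eta$ and $\partial_j Sf=F\,\partial_j\eta$. Since $\|F\|_\infty\le\|f\|_{L^1}\le C\|f\|_\Phi$, we get $\|Sf\|_{1,\Phi}\le C\|f\|_\Phi$. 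A Fubini argument in the modular, restricted to the set where $|\eta|\ge c>0$, gives $\|Sf\|_{1,\Phi}\ge\|\partial_1Sf\|_\Phi\ge c'\|f\|_\Phi$.
\item[4)] $Sf$ lies in $W_0^1L^\Phi(\Omega)$. Mollifying $F$ in $x_1$ produces $C_0^\infty$ functions converging weak$^*$, since they are bounded and converge a.e.
\end{enumerate}
Thus $X$ is isomorphic to a closed subspace of a reflexive space, so $X$ is reflexive. Since $X$ has codimension one in $L^\Phi(I)$, the space $L^\Phi(I)$ is reflexive too. The one-dimensional version of Proposition \ref{reflexividade}$(ii)$ then gives $\Phi,\tilde\Phi\in\Delta_2(\infty)$.

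\emph{Item $ii)$, the main obstacle.} The inclusion $\overline{C_0^\infty}^{\|\cdot\|_{1,\Phi}}\subset W_0^1L^\Phi(\Omega)$ is immediate. For the reverse, take $u\in W_0^1L^\Phi(\Omega)$ and extend it by zero. I would first check that the extension lies in $W^1L^\Phi(\mathbb{R}^N)$ with $\nabla u=0$ outside $\Omega$. This is the delicate point, because $u$ is only a weak$^*$ limit. I would test against $\varphi\in C_0^\infty(\mathbb{R}^N)$ and pass to the limit using the pairing with $E^{\tilde\Phi}$ functions; bounded $\varphi,\partial_j\varphi$ lie in $E^{\tilde\Phi}$.

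Next, using that $\Omega$ is Lipschitz, I would localize with a partition of unity. In each chart I translate $u$ slightly inward, which pushes the support into $\Omega$, and then mollify. Under $\Phi\in\Delta_2(\infty)$ we have $L^\Phi=E^\Phi$, where translations are norm continuous and mollifiers converge in norm. So both steps converge in $\|\cdot\|_{1,\Phi}$, and the product rule terms coming from the cutoffs are controlled by $\|u\|_\Phi$. Verifying carefully that the zero extension has no boundary contribution in the weak$^*$ setting is where I expect the real work to be.
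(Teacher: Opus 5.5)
Your argument is correct. It is also genuinely different from what the paper does, since the paper gives no proof and only cites \cite{Adams2}.

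For (iii) and the sufficiency half of (i), you use the same device that underlies the cited result. The map $u\mapsto(u,\nabla u)$ identifies $W_0^1L^\Phi(\Omega)$ with a weak$^*$-closed, hence norm-closed, subspace of $(L^\Phi(\Omega))^{N+1}$, and Proposition \ref{reflexividade} then does the rest. Your write-up adds two things a bare citation leaves implicit:
\begin{enumerate}
\item[(a)] an explicit proof of necessity in (i), via the isomorphic copy $f\mapsto F(x_1)\eta(x')$ of the codimension-one subspace of $L^\Phi(I)$;
\item[(b)] a proof of (ii) showing that this is the only item where the Lipschitz boundary is used.
\end{enumerate}

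Two small corrections to the write-up. First, in step 4, the claim that a sequence bounded in $L^\Phi$ and convergent a.e.\ converges weak$^*$ needs justification. It follows from Egorov's theorem together with the absolute continuity of the norm of elements of $E^{\tilde\Phi}$; you should say so.

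Second, in (ii) the zero-extension step is not where the work lies. The functional $u\mapsto\int_\Omega(u\,\partial_j\varphi+\partial_ju\,\varphi)\,dx$ is weak$^*$ continuous, because $\varphi,\partial_j\varphi\in L^\infty(\Omega)\subset E^{\tilde\Phi}(\Omega)$. It vanishes on $C_0^\infty(\Omega)$, hence on its weak$^*$ closure, and that settles it in one line. The real care is elsewhere:
\begin{enumerate}
\item[(1)] in the inward translation, where the Lipschitz graph guarantees that the translated supports are compact in $\Omega$;
\item[(2)] in the norm continuity of translations and mollifications in $E^\Phi=L^\Phi$, which is exactly where $\Delta_2(\infty)$ enters.
\end{enumerate}

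The citation buys brevity. Your route buys a self-contained proof adapted to the paper's weak$^*$ definition of $W_0^1L^\Phi(\Omega)$, including the necessity direction of (i).
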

Below, let us recall some facts related to Fractional Orlicz-Sobolev spaces.

\subsection{Fractional Orlicz-Sobolev spaces}

As we already mentioned, the \textbf{fractional Orlicz-Sobolev space}  
 was introduced by J. F. Bonder and A. M. Salort \cite{FBS} as
$$
\displaystyle W^s L^\Phi(\Omega):=\left\{u \in L^{\Phi}(\Omega); D_s u \in L^\Phi(\mathbb{R}^N \times \mathbb{R}^N, \mathrm{d}\mu )\right\},
$$
under the norm
\begin{equation}\label{normafractional}
||u||_{s,\Phi}=||u||_\Phi +[u]_{s,\Phi},
\end{equation}
where  $u$ is understood as being $zero$ on $\mathbb{R}^N \setminus \Omega$, 
$$\displaystyle [u]_{s,\Phi}:=\inf \left\{\lambda >0; (1-s)\iint\limits_{\mathbb{R}^N \times \mathbb{R}^N}\Phi \left(\frac{|D_s u|}{\lambda} \right)\mathrm{d}\mu \leq 1  \right\}$$ is the \textbf{$(s,\Phi)$-Gagliardo seminorm,} and
\begin{center}
$\displaystyle D_s u =\frac{u(x)-u(y)}{|x-y|^s}, \ \ \ \   \mathrm{d}\mu:=\frac{\mathrm{d}x\mathrm{d}y}{|x-y|^N}, \ \forall (x,y) \in (\mathbb{R}^N \times \mathbb{R}^N) $.  
\end{center}

A first important property.
\begin{prop}\label{teo2.5}(See \cite{ABS})
$C_0^{2}(\Omega) \subset W^s L^\Phi(\Omega).$
\end{prop}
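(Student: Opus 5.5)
We need to show that every $u\in C_0^2(\Omega)$, extended by zero outside $\Omega$, lies in $L^\Phi(\Omega)$ and satisfies
$$\iint_{\mathbb{R}^N\times\mathbb{R}^N}\Phi\Big(\frac{|D_su|}{\lambda}\Big)\,d\mu<\infty \quad\text{for some }\lambda>0.$$
Membership in $L^\Phi(\Omega)$ is immediate: $u$ is bounded and $\Omega$ has finite measure, so $\int_\Omega\Phi(|u|)\,dx\le\Phi(\|u\|_\infty)|\Omega|<\infty$. The work is in the Gagliardo modular. For it we use only two elementary pointwise bounds on $D_su$:
$$|u(x)-u(y)|\le \|\nabla u\|_\infty|x-y| \quad\text{and}\quad |u(x)-u(y)|\le 2\|u\|_\infty .$$
Only $C^1$ regularity is actually needed, so $C^2$ is more than enough.

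We split $\mathbb{R}^N\times\mathbb{R}^N$ into three regions.

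\emph{Region A: both points outside $\Omega$.} Here $D_su=0$, so it contributes nothing.

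\emph{Region B: $|x-y|<1$ with at least one point in $\Omega$.} Let $K=\operatorname{supp}u$. If $u(x)\neq u(y)$, then one of the points lies in $K$ and the other lies within distance $1$ of it. So the relevant set is contained in $\{x\in K_1,\ |h|<1\}$ with $h=y-x$ (or symmetrically), where $K_1$ is the $1$-neighbourhood of $K$, a bounded set. On this set,
$$|D_su|\le \|\nabla u\|_\infty|h|^{1-s}\le \|\nabla u\|_\infty .$$
Take $\lambda=\max\{\|\nabla u\|_\infty,2\|u\|_\infty,1\}$. Convexity of $\Phi$ with $\Phi(0)=0$ gives $\Phi(\theta t)\le\theta\Phi(t)$ for $\theta\in[0,1]$. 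Applying this with $\theta=|h|^{1-s}$, the contribution of Region B is at most
$$2|K_1|\,\Phi(1)\int_{|h|<1}|h|^{1-s-N}\,dh=2|K_1|\,\Phi(1)\,\frac{|\mathbb{S}^{N-1}|}{1-s}<\infty .$$
The radial integral converges precisely because $1-s>0$.

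\emph{Region C: $|x-y|\ge1$ with at least one point in $\Omega$.} We use $|D_su|\le 2\|u\|_\infty|h|^{-s}$, so $|D_su|/\lambda\le |h|^{-s}\le 1$. The same convexity trick gives
$$\Phi(|D_su|/\lambda)\le |h|^{-s}\Phi(1).$$
Since at least one variable lies in the bounded set $\Omega$, the contribution is at most
$$2|\Omega|\,\Phi(1)\int_{|h|\ge1}|h|^{-s-N}\,dh=2|\Omega|\,\Phi(1)\,\frac{|\mathbb{S}^{N-1}|}{s}<\infty .$$

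Adding the three regions shows the modular is finite for this $\lambda$. Hence $D_su\in L^\Phi(d\mu)$ and $u\in W^sL^\Phi(\Omega)$.

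The only point requiring care is that $\Phi$ need not satisfy $\Delta_2$. That is why we first scale by a fixed $\lambda$ so that all arguments of $\Phi$ are at most $1$, and then use only the convexity inequality $\Phi(\theta t)\le\theta\Phi(t)$ rather than any growth condition. No other obstacle is expected; the remaining steps are routine polar-coordinate integrations.
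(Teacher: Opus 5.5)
Your proof is correct. The paper gives no argument of its own for this inclusion, only the citation, and what you wrote is the standard proof: the Lipschitz bound plus $\Phi(\theta t)\le\theta\Phi(t)$ near the diagonal, the sup bound for $|x-y|\ge 1$, and in fact only $C^1$ regularity is used. One small refinement: place the cut-off radii where $\|\nabla u\|_\infty|x-y|^{1-s}/\lambda$ and $2\|u\|_\infty|x-y|^{-s}/\lambda$ fall below $1$. The leftover annuli then have finite $\mu$-measure over the bounded support, and $\Phi$ is bounded there, so the modular is finite for every $\lambda>0$. Thus the fixed scaling is a convenience rather than something forced by the failure of $\Delta_2$, and the argument also covers the $\lambda=1$ form of the definition written in the introduction.
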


As a consequence, we have well-defined the Banach space 
$W_0^s E^\Phi(\Omega)$ as the closure of $C_0^\infty(\Omega)$ in the 
norm \eqref{normafractional}.

\begin{prop}
Let $\Omega$ be an open bounded subset of $\mathbb{R}^N$ with Lispchitz boundary and $\Phi$ an N-function. Then:
\begin{enumerate}
    \item[$i)$] $W^s L^\Phi(\Omega)$ is a Banach space with the norm \eqref{normafractional}, $W^s_0E^\Phi(\Omega)$ is a closed subspace of $W^s L^\Phi(\Omega)$ (hence a Banach space it self). Furthermore, $W^s_0E^\Phi(\Omega)$ is separable, 
     \item[$ii)$]\label{impontant} the spaces $W^s L^\Phi(\Omega)$ and $W^sE^\Phi(\Omega)$ are isometrically identified with $L^\Phi(\Omega) \times
L^\Phi(\mathbb{R}^N \times \mathbb{R}^N; d\mu)$ and $E^\Phi(\Omega)\times E^\Phi(\mathbb{R}^N \times \mathbb{R}^N; d\mu)$, respectively.
\end{enumerate}
\end{prop}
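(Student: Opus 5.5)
The plan is to prove both parts at once through the map
$$T:W^sL^\Phi(\Omega)\to L^\Phi(\Omega)\times L^\Phi(\mathbb{R}^N\times\mathbb{R}^N;\mathrm{d}\mu),\qquad Tu=(u,D_su),$$
with $u$ extended by zero outside $\Omega$. The product carries the norm $\|(a,b)\|:=\|a\|_\Phi+\|b\|_{\Phi,\mu}$, where $\|b\|_{\Phi,\mu}$ is the Luxemburg norm with respect to the measure $(1-s)\,\mathrm{d}\mu$. This product is a Banach space, since each factor is a Luxemburg-normed Orlicz space over a $\sigma$-finite measure space. By the definition of $[\cdot]_{s,\Phi}$ we have $\|D_su\|_{\Phi,\mu}=[u]_{s,\Phi}$. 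Hence $T$ is linear and $\|Tu\|=\|u\|_{s,\Phi}$, so $T$ is an isometry onto its image. This already yields the identification claimed in $ii)$, once we know the image is closed.

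\textbf{Completeness.} I will show that $T(W^sL^\Phi(\Omega))$ is closed. Let $u_n\in W^sL^\Phi(\Omega)$ with $Tu_n\to(u,w)$ in the product.
\begin{itemize}
\item By Proposition \ref{LPhicontinuousL1}, $u_n\to u$ in $L^1(\Omega)$. Along a subsequence, $u_n\to u$ a.e. in $\Omega$, and trivially also on $\mathbb{R}^N\setminus\Omega$, where every function vanishes.
\item Consequently $D_su_n(x,y)\to D_su(x,y)$ for a.e. $(x,y)\in\mathbb{R}^{2N}$, because the product of two full-measure sets has full measure.
\item Convergence in $L^\Phi(\mathrm{d}\mu)$ gives $L^1$ convergence on every set of finite $\mu$-measure, for instance on $\{|x-y|>\varepsilon,\ |x|,|y|<R\}$. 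By a diagonal argument, a further subsequence has $D_su_n\to w$ $\mu$-a.e.
\end{itemize}
Therefore $w=D_su$ a.e., so $u\in W^sL^\Phi(\Omega)$ and $Tu=(u,w)$. Hence $W^sL^\Phi(\Omega)$ is Banach. The space $W^s_0E^\Phi(\Omega)$ is closed by its definition as a closure, so it is a Banach space as well.

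\textbf{Separability and the $E^\Phi$ identification.} First I check that $T$ maps $C_0^\infty(\Omega)$ into $E^\Phi(\Omega)\times E^\Phi(\mathbb{R}^{2N};\mathrm{d}\mu)$.
\begin{itemize}
\item For $\varphi\in C_0^\infty(\Omega)$ and every $\lambda>0$, Proposition \ref{teo2.5} applied to $\lambda\varphi$ gives $\iint\Phi(\lambda|D_s\varphi|)\,\mathrm{d}\mu<\infty$.
\item I also need a uniform bound: $|D_s\varphi|\le C\min\{|x-y|^{1-s},|x-y|^{-s}\}$, and it vanishes when both points lie outside the support.
\item From this, truncating the domain to bounded sets and the values to bounded levels gives elements of $E^\Phi(\mathrm{d}\mu)$. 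These truncations converge to $D_s\varphi$ in norm by dominated convergence, using finiteness of the modular at every level $\lambda$.
\end{itemize}
The spaces $E^\Phi(\Omega)$ and $E^\Phi(\mathbb{R}^{2N};\mathrm{d}\mu)$ are closed and separable. This is Proposition \ref{reflexividade}$(i)$, extended to the $\sigma$-finite measure $\mu$ by the same simple-function argument. Taking closures, $T(W^s_0E^\Phi(\Omega))$ lies in the separable space $E^\Phi\times E^\Phi$. A subspace of a separable metric space is separable, so $W^s_0E^\Phi(\Omega)$ is separable. Restricting the same isometry to $W^sE^\Phi(\Omega)=\{u:u\in E^\Phi(\Omega),\ D_su\in E^\Phi(\mathrm{d}\mu)\}$ gives the second identification in $ii)$.

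\textbf{Main obstacle.} I expect the hardest step to be extracting $\mu$-a.e. convergence of $D_su_n$ and identifying the limit $w$ with $D_su$. The measure $\mu$ is infinite near the diagonal, so $L^\Phi(\mathrm{d}\mu)$ does not embed into $L^1(\mathrm{d}\mu)$ globally. I will handle this by localizing to sets of finite $\mu$-measure and diagonalizing, as described above.
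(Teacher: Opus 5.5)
The paper gives no proof of this proposition; it is quoted as background from the fractional Orlicz--Sobolev literature. Your route is the standard one, modeled on classical Sobolev spaces: the isometry $T u=(u,D_su)$ into the product space, followed by closedness of its image.

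Your completeness step is correct. You read $ii)$ as an identification with a \emph{closed subspace} of the product, which is the right interpretation, since $T$ is not onto; say so explicitly. The localization you flag as the main obstacle is not needed. If $\|f_n-f\|_{\Phi,\mu}<\varepsilon$, Chebyshev gives $\mu(\{|f_n-f|>\delta\})\le 1/\big((1-s)\Phi(\delta/\varepsilon)\big)$. So norm convergence already implies convergence in $\mu$-measure on all of $\mathbb{R}^{2N}$, and Riesz's theorem yields an a.e.\ convergent subsequence.

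The separability step has a wrong justification. Proposition \ref{teo2.5} applied to $\lambda\varphi$ does \emph{not} give $\iint\Phi(\lambda|D_s\varphi|)\,\mathrm{d}\mu<\infty$. Under the definition that makes $W^sL^\Phi(\Omega)$ a Banach space, membership only means $D_s\varphi\in L^\Phi(\mathrm{d}\mu)$, i.e.\ the modular is finite at \emph{some} scale. Since the space is linear, rescaling $\varphi$ adds nothing.

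Finiteness at every scale is exactly what puts $D_s\varphi$ in $E^\Phi(\mathrm{d}\mu)$, and it must come from your pointwise bound. Set $g(r)=\min\{r^{1-s},r^{-s}\}\le 1$. Convexity and $\Phi(0)=0$ give $\Phi(\lambda C g)\le \Phi(\lambda C)\,g$. Let $K=\mathrm{supp}\,\varphi$; then $D_s\varphi(x,y)=0$ unless $x\in K$ or $y\in K$. Hence
\[
\iint\limits_{\mathbb{R}^N\times\mathbb{R}^N}\Phi(\lambda|D_s\varphi|)\,\mathrm{d}\mu\le 2\,\Phi(\lambda C)\,|K|\,|\mathbb{S}^{N-1}|\Big(\frac{1}{1-s}+\frac{1}{s}\Big)<\infty\quad\text{for every }\lambda>0 .
\]

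Also, ``truncating the domain to bounded sets'' does not work as stated. Bounded sets meeting the diagonal have infinite $\mu$-measure; for instance $\mu(B_R\times B_R)=\infty$. A bounded function supported on such a set need not even belong to $L^\Phi(\mathrm{d}\mu)$. Truncate instead to sets of finite $\mu$-measure, such as $\{|x-y|>\varepsilon,\ |x|,|y|<R\}$, or $\{|f|\ge 1/k\}$, which has finite measure by Chebyshev. Alternatively, use directly the characterization $E^\Phi(\mathrm{d}\mu)=\{f:\iint\Phi(\lambda|f|)\,\mathrm{d}\mu<\infty\ \forall\lambda>0\}$, which the display above verifies.

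With these corrections the rest of your argument goes through: the closure of $T(C_0^\infty(\Omega))$ lies in the separable space $E^\Phi(\Omega)\times E^\Phi(\mathrm{d}\mu)$, and $T(W^sE^\Phi(\Omega))=T(W^sL^\Phi(\Omega))\cap\big(E^\Phi\times E^\Phi\big)$ is closed.
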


As a consequence of the last item $ii)$ and the fact $$(E^{\widetilde{\Phi}})'=L^\Phi \ \text{and} \ (E^\Phi)'=L^{\widetilde{\Phi}}, $$
we have that the space $W^s L^\Phi(\Omega)$ is a closed subspace of $L^\Phi(\Omega) \times L^\Phi(\mathbb{R}^N \times \mathbb{R}^N; d\mu)$ being this space the dual of the following separable space $(E^{\widetilde{\Phi}} \times E^{\widetilde{\Phi}}(d\mu))'$, which implies by Banach-Alaoglu theorem that $W^s L^\Phi(\Omega)$ is  weak* closed in $L^\Phi(\Omega) \times 
L^\Phi(d\mu)$. Thus, by denoting $W_0^s L^\Phi(\Omega)$ as the weak* closure of $C_{0}^{\infty}(\Omega)$ in $W^s L^\Phi(\Omega)$, we have that $W_0^s L^\Phi(\Omega)$
is a weak* closed subset of the dual of the separable space $(E^{\widetilde{\Phi}}(\Omega))' \times (E^{\widetilde{\Phi}}(\mathbb{R}^N \times \mathbb{R}^N; d\mu))'$, because 
$$W_0^s L^\Phi(\Omega)\subset L^\Phi(\Omega) \times L^\Phi(\mathbb{R}^N \times \mathbb{R}^N; d\mu)=(E^{\widetilde{\Phi}}(\Omega))' \times (E^{\widetilde{\Phi}}(\mathbb{R}^N \times \mathbb{R}^N; d\mu))'.$$

We recall that $W^s_0L^\Phi(\Omega)$ may not be reflexive so that the  weak$^*$ convergence is the our main manageable tool. The next result is a consequence of the above information.

\begin{lem} \label{Estrela} If $(u_n) \subset W_0^s L^\Phi(\Omega) $ is a bounded sequence, then there are a subsequence of $(u_n)$, still denoted by itself, and $u \in  W_0^s L^\Phi(\Omega)$ such that
	$$
	u_n \stackrel{*}{\rightharpoonup} u \quad \mbox{in} \quad W_0^s L^\Phi(\Omega).
	$$
\end{lem}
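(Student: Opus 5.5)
The plan is to obtain the lemma from the Banach--Alaoglu theorem in its sequential form, which holds in duals of separable spaces, together with the weak$^*$ closedness of $W_0^sL^\Phi(\Omega)$ established just above. Set $X:=E^{\widetilde{\Phi}}(\Omega)\times E^{\widetilde{\Phi}}(\mathbb{R}^N\times\mathbb{R}^N;d\mu)$. By Proposition \ref{reflexividade}$(i)$, applied to $\widetilde\Phi$ and to the measure space $(\mathbb{R}^N\times\mathbb{R}^N,d\mu)$, both factors are separable Banach spaces. Hence $X$ is separable, and
$$X'=L^\Phi(\Omega)\times L^\Phi(\mathbb{R}^N\times\mathbb{R}^N;d\mu)$$
with an equivalent norm, using $(E^{\widetilde\Phi})'=L^\Phi$.

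Next, embed $W_0^sL^\Phi(\Omega)$ into $X'$ through the isometry $T u=(u,D_su)$ from item $ii)$ of the preceding proposition. The topology in the statement, $u_n\stackrel{*}{\rightharpoonup}u$ in $W_0^sL^\Phi(\Omega)$, is understood as $Tu_n\stackrel{*}{\rightharpoonup}Tu$ in $X'$. Since $(u_n)$ is bounded in $\|\cdot\|_{s,\Phi}$, the sequence $(Tu_n)$ is bounded in $X'$.

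Because $X$ is separable, the closed balls of $X'$ are weak$^*$ compact and weak$^*$ metrizable, so they are weak$^*$ sequentially compact. We therefore extract a subsequence and some $(w,W)\in X'$ with $Tu_n\stackrel{*}{\rightharpoonup}(w,W)$. Concretely, we take a countable dense set in $X$ and run a diagonal argument.

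The main step is to show that $(w,W)=Tu$ for some $u\in W_0^sL^\Phi(\Omega)$. By construction $W_0^sL^\Phi(\Omega)$ is the weak$^*$ closure of $C_0^\infty(\Omega)$ inside $W^sL^\Phi(\Omega)$, and $T(W^sL^\Phi(\Omega))$ is weak$^*$ closed in $X'$, as noted above. Together these give that $T(W_0^sL^\Phi(\Omega))$ is weak$^*$ closed in $X'$, and in particular weak$^*$ sequentially closed. So the limit $(w,W)$ lies in $T(W_0^sL^\Phi(\Omega))$, which yields $u$ with $w=u$ and $W=D_su$.

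If one does not want to rely on that remark, the relation $W=D_sw$ can be checked directly. Test against functions $(0,\psi)$ and $(\varphi,0)$ with $\varphi$, $\psi$ bounded and compactly supported away from the diagonal; such functions lie in the $E^{\widetilde\Phi}$ factors. Weak$^*$ convergence then gives $u_n\to w$ and $D_su_n\to W$ in the sense of distributions, and since $D_s$ is continuous off the diagonal for locally integrable functions, $W=D_sw$ almost everywhere.

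The only delicate point is the identification of the weak$^*$ limit as an element of $W_0^sL^\Phi(\Omega)$. That is, we must pass from weak$^*$ closedness, a topological property, to sequential closedness, using metrizability on bounded sets. We also need the vanishing of $w$ outside $\Omega$, which is preserved under distributional limits.
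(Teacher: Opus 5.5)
Your proposal is correct and is essentially the paper's argument: sequential Banach--Alaoglu in $(E^{\widetilde\Phi}(\Omega)\times E^{\widetilde\Phi}(\mathbb{R}^N\times\mathbb{R}^N;d\mu))'$, combined with the weak$^*$ closedness of $W_0^sL^\Phi(\Omega)$. Your off-diagonal test showing $W=D_sw$ usefully supplies a proof of the weak$^*$ (sequential) closedness of $W^sL^\Phi(\Omega)$, which the paper only asserts. One minor correction: the step from weak$^*$ closed to weak$^*$ sequentially closed holds in any topological space and needs no metrizability. Metrizability of balls, via Krein--\v{S}mulian, would only be needed to upgrade your sequential check to genuine weak$^*$ closedness, and the lemma does not require that.
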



Below, let us present some important definitions and results for our approach, besides additional assumptions that are need  for compact-embedding results.

\begin{defi}
Given N-functions $\Phi$ and $\Psi$, we say that \textbf{$\Phi$ grows essentially more slowly (near infinity) than $\Psi$} if
$$\displaystyle \lim_{t \rightarrow \infty}\frac{\Phi(t)}{\Psi(\beta t)}=0,$$
for all $\beta > 0$. In particular, for any $c>0$ there exists $T>0$ such that $$\Phi(t) \leq \Psi(ct), \ \text{for} \ t \geq T.$$
\end{defi}

By letting an $N$-function $\Phi$  such that 
\begin{equation}\label{2.12}
\displaystyle \int_{1}^{\infty}\left(\frac{t}{\Phi(t)} \right)^{\frac{s}{N-s}}\mathrm{d}t=\infty ~\mbox{and }\int_{0}^{1}\left(\frac{t}{\Phi(t)} \right)^{\frac{s}{N-s}}\mathrm{d}t < \infty
\end{equation}
hold, we can define the critical function $\Phi_*(t) := \Phi(H^{-1}(t))$ by $$\displaystyle H(t)=\left(\int_{0}^{t}\left(\frac{\tau}{\Phi(\tau)} \right)^{\frac{s}{N-s}}\mathrm{d}\tau \right)^{\frac{N-s}{N}}$$
and state the below result, whose proof can be found at \cite[Theorem. 9.1]{Alberico}. 

\begin{prop}\label{compact}
Let $\Omega \subset \mathbb{R}^N$ be a bounded Lipschitz domain and $\Phi$ an N-function satisfying \eqref{2.12}. If the N-function
$\Psi$ grows essentially more slowly than $\Phi_*$ near to infinity, then the embedding $W_0^s L^\Phi(\Omega) \subset L^{\Psi} (\Omega)$
is compact.
\end{prop}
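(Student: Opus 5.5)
The statement to prove is the compact embedding $W_0^s L^\Phi(\Omega) \subset L^\Psi(\Omega)$ (Proposition \ref{compact}). The natural route is to reduce it to two known facts: the fractional Orlicz--Sobolev inequality of Alberico--Cianchi--Pick--Slavíková type, $\|u\|_{L^{\Phi_*}(\mathbb{R}^N)}\le C[u]_{s,\Phi}$ for $u$ vanishing outside $\Omega$, and a compactness statement into $L^1(\Omega)$. These are then combined by a Vitali-type interpolation in Orlicz spaces.

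\textbf{Step 1 (continuous embedding).} Condition \eqref{2.12} makes $H$ well defined and $\Phi_*=\Phi\circ H^{-1}$ an $N$-function. We invoke the optimal fractional Orlicz--Sobolev embedding from \cite{Alberico} to get $W_0^sL^\Phi(\Omega)\hookrightarrow L^{\Phi_*}(\Omega)$ with a bound $\|u\|_{\Phi_*}\le C\,\|u\|_{s,\Phi}$. This holds first for $C_0^\infty(\Omega)$. It extends to the weak$^*$ closure because bounded sets in $L^{\Phi_*}$ are closed under a.e. limits (Fatou applied to the Luxemburg modular), once Step 2 supplies a.e. convergence along subsequences.

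\textbf{Step 2 (compactness in $L^1$).} Take a bounded sequence $(u_n)\subset W_0^sL^\Phi(\Omega)$. Since $\Phi(t)\ge ct-C$, a bound on the $(s,\Phi)$-modular controls the Gagliardo-type quantity $\iint_{|x-y|<1}\frac{|u_n(x)-u_n(y)|}{|x-y|^{N+s}}\,dx\,dy$. This yields a uniform estimate $\sup_{|h|\le\delta}\|u_n(\cdot+h)-u_n\|_{L^1}\to0$ as $\delta\to0$. Together with the uniform support in $\Omega$, the Fréchet--Kolmogorov theorem gives a subsequence converging in $L^1(\Omega)$ and a.e. to some $u$, and Fatou places $u$ in $L^{\Phi_*}$.

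\textbf{Step 3 (upgrade to $L^\Psi$).} This is the main obstacle, because $\Psi$ need not be $\Delta_2$, so modular convergence does not imply norm convergence for free. It suffices to show $\int_\Omega\Psi(|u_n-u|/\varepsilon)\,dx\to0$ for every $\varepsilon>0$. Fix $\varepsilon$. Since $\|u_n-u\|_{\Phi_*}\le M$, the essentially slower growth of $\Psi$ provides $T$ with $\Psi(t/\varepsilon)\le \eta\,\Phi_*(t/M)$ for $t\ge T$, where $\eta$ is arbitrary. This uses $\Psi(t/\varepsilon)/\Phi_*(t/M)\to0$, which follows by rescaling the definition with $\beta=\varepsilon/M$.

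We then split $\Omega$ into $\{|u_n-u|\ge T\}$ and its complement. On the first set the integral is at most $\eta\int\Phi_*(|u_n-u|/M)\le\eta$. On the second set the integrand is bounded by $\Psi(T/\varepsilon)$ and tends to $0$ a.e., so dominated convergence makes that part vanish. Letting $\eta\to0$ gives $\|u_n-u\|_\Psi\to0$. Applying this to any bounded sequence yields compactness of the embedding.
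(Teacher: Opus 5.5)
The paper does not prove this statement; it quotes the compact embedding theorem \cite[Theorem 9.1]{Alberico}. Your route is different: you reduce compactness to the continuous embedding into $L^{\Phi_*}$ plus compactness in $L^1$. That reduction is legitimate, and Step 3 (the Vitali-type upgrade from essentially slower growth and boundedness in $L^{\Phi_*}$) is correct as written.

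The flaw is in Step 2. You claim that a bound on the $(s,\Phi)$-modular controls $\iint_{|x-y|<1}\frac{|u_n(x)-u_n(y)|}{|x-y|^{N+s}}\,\mathrm{d}x\,\mathrm{d}y=\iint_{|x-y|<1}|D_su_n|\,\mathrm{d}\mu$, and this is false.
\begin{itemize}
\item The bound $\Phi(t)\ge ct-C$ only helps if the constant $C$ is integrated against a finite measure. But $\mathrm{d}\mu$ has infinite mass on $\{|x-y|<1,\ x\in\Omega\}$, since $\int_{|z|<1}|z|^{-N}\,\mathrm{d}z=+\infty$.
\item No other argument can rescue the claim. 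Take $\Phi(t)=t^p$ with $p>1$ and $sp<N$, so that \eqref{2.12} holds. The claim would then give $W^{s,p}_0(\Omega)\subset W^{s,1}$, which fails; this is the Mironescu--Sickel non-embedding. In dimension one, $\chi(x)\sum_j j^{-1}2^{-js}\cos(2^jx)$ with a smooth cutoff $\chi$ lies in $B^s_{p,p}=W^{s,p}$ but not in $B^s_{1,1}=W^{s,1}$.
\end{itemize}
So the uniform translation estimate is not justified by what you wrote.

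The conclusion of Step 2 is nevertheless true. The repair is to keep a power of $|x-y|$, which makes the measure finite near the diagonal. Averaging over $y\in B_{|h|}(x)$ gives
\[
\|u(\cdot+h)-u\|_{L^1(\mathbb{R}^N)}\le\frac{2}{|B_{|h|}|}\iint\limits_{|x-y|<2|h|}|u(x)-u(y)|\,\mathrm{d}x\,\mathrm{d}y\le C_N\iint\limits_{|x-y|<2|h|}|x-y|^{s}\,|D_su|\,\mathrm{d}\mu .
\]
The integrand vanishes unless $x\in\Omega$ or $y\in\Omega$. On that region with $|x-y|<2|h|$, the measure $|x-y|^s\,\mathrm{d}\mu=|x-y|^{s-N}\,\mathrm{d}x\,\mathrm{d}y$ has mass at most $C(N,s)|\Omega|\,|h|^s$. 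Now split $|D_su_n|\le M\Phi(|D_su_n|/M)+MT_0$, where $M\ge\sup_n[u_n]_{s,\Phi}$ and $\Phi(t)\ge t$ for $t\ge T_0$. This yields $\|u_n(\cdot+h)-u_n\|_{L^1}\le C|h|^s$ uniformly in $n$. Equivalently, the modular controls the $W^{r,1}$-seminorm for every $r<s$, but not for $r=s$.

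With this fix your proof is complete. Compared with the paper's bare citation, it works directly with the zero-extension seminorm on $\mathbb{R}^N\times\mathbb{R}^N$ that defines $W^s_0L^\Phi(\Omega)$, and it never uses the Lipschitz hypothesis.

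A side remark on Step 1: you need no density argument. The Sobolev inequality of \cite{Alberico} is stated for functions decaying near infinity, so it applies to every $u\in W^sL^\Phi(\Omega)$ extended by zero. As written, passing to a.e. limits of sequences would only reach the weak$^*$ \emph{sequential} closure of $C_0^\infty(\Omega)$.
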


In particular, we obtain from the previous theorem, whose proof is in \cite[Proposition 2.3]{Salort}, the next one.

\begin{prop}\label{211}
Under the hypothesis of the previous theorem, the embedding of $W_0^s L^\Phi(\Omega)$ into $L^\Phi(\Omega)$ is compact. In particular,  $W_0^sL^\Phi(\Omega)$ is continuously embedded into $L^1(\Omega)$.
\end{prop}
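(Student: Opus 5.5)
We will deduce the statement from Proposition \ref{compact} with the choice $\Psi=\Phi$. The only thing to check is that $\Phi$ grows essentially more slowly than $\Phi_*=\Phi\circ H^{-1}$ near infinity. Once that is done, Proposition \ref{compact} gives the compact embedding $W_0^sL^\Phi(\Omega)\hookrightarrow L^\Phi(\Omega)$. A compact linear embedding is in particular bounded. Composing it with the continuous embedding $L^\Phi(\Omega)\hookrightarrow L^1(\Omega)$ from Proposition \ref{LPhicontinuousL1} (valid since $\Omega$ is bounded) then yields the continuous embedding $W_0^sL^\Phi(\Omega)\hookrightarrow L^1(\Omega)$.

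First, $H$ and $\Phi_*$ are well defined. Set $g(\tau)=(\tau/\Phi(\tau))^{s/(N-s)}$. The second condition in \eqref{2.12} makes $H$ finite, and $H$ is continuous and strictly increasing with $H(0)=0$. The first condition in \eqref{2.12} gives $H(t)\to\infty$. Hence $H:[0,\infty)\to[0,\infty)$ is a bijection and $H^{-1}$ is increasing.

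Next, the key claim is that $H(t)=o(t)$ as $t\to\infty$, equivalently $H^{-1}(\tau)/\tau\to\infty$. Since $\Phi$ is an $N$-function, $\Phi(\tau)/\tau\to\infty$, so $g(\tau)\to 0$ as $\tau\to\infty$. In particular $g$ is bounded on $[1,\infty)$, say by $C_1$. Writing $C_0=\int_0^1 g$, we get $\int_0^t g\,d\tau\le C_0+C_1t$ for $t\ge1$. Therefore
$$H(t)\le (C_0+C_1t)^{\frac{N-s}{N}},$$
and since $\frac{N-s}{N}<1$, this is $o(t)$.

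Finally, we conclude the growth comparison. Fix $\beta>0$ and $M\ge1$. By the claim, there is $T$ with $H^{-1}(\beta t)\ge Mt$ for $t\ge T$. By monotonicity of $\Phi$ and its convexity with $\Phi(0)=0$, which gives $\Phi(Mt)\ge M\Phi(t)$, we obtain for $t\ge T$
$$\frac{\Phi(t)}{\Phi_*(\beta t)}=\frac{\Phi(t)}{\Phi(H^{-1}(\beta t))}\le\frac{\Phi(t)}{\Phi(Mt)}\le\frac1M.$$
Since $M$ is arbitrary, $\lim_{t\to\infty}\Phi(t)/\Phi_*(\beta t)=0$ for every $\beta>0$. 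This is exactly the hypothesis of Proposition \ref{compact}.

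The main point needing care is the sublinearity of $H$. It relies on $\Phi$ being superlinear at infinity together with the exponent $\frac{N-s}{N}$ being strictly less than $1$; everything else is a direct application of earlier results.
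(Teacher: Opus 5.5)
Your proposal is correct and follows the route the paper indicates. The paper applies Proposition \ref{compact} with the target $N$-function taken to be $\Phi$ itself and defers the check that $\Phi$ grows essentially more slowly than $\Phi_*$ to \cite[Proposition 2.3]{Salort}; you carry out that check directly, via $H(t)=O(t^{(N-s)/N})=o(t)$ and $\Phi(Mt)\ge M\Phi(t)$, and then compose with Proposition \ref{LPhicontinuousL1} to get the continuous embedding into $L^1(\Omega)$.
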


To state a version of the well-known \textbf{Poincaré's Inequality} for $W_0^s L^\Phi(\Omega)$, let us  denote the \textbf{modulars} on $L^\Phi(\Omega)$ and $W_0^s L^\Phi(\Omega
)$, respectively, by
$$\rho_{\Phi}(u)=\int\limits_{\Omega}\Phi(|u|)\mathrm{d}x, \ \mbox{and} \ I_1(u)=(1-s)\iint\limits_{\mathbb{R}^N \times \mathbb{R}^N}\Phi(|D_s u|)\mathrm{d}\mu.$$ 

\begin{prop}(See \cite{FBS,Salort})\label{Poincare} 
Assume that $\Omega$ is a bounded domain and $\Phi$ satisfies $(\phi_1)-(\phi_2)$. Then, there exists a constant $C>0$, depending on $s,N, \Phi$ and $\Omega$, such that
$$
    \displaystyle \rho_\Phi(u) \leq \iint\limits_{\mathbb{R}^N \times \mathbb{R}^N}\Phi(C|D_s u|)\mathrm{d}\mu.
$$
for every $ 0< s < 1$ and $u \in L^\Phi(\Omega)$. In addition, if $\Phi$ satisfies the $\Delta_2$-condition then
$$
    \displaystyle \rho_\Phi(u) \leq  C I_1(u),
$$
where $C$ is not depend of $s$.
\end{prop}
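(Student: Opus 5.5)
We prove the two inequalities separately; throughout, $u\in L^\Phi(\Omega)$ is extended by zero outside $\Omega$ (if the Gagliardo integral is infinite there is nothing to prove). The idea is the standard one from \cite{FBS,Salort}: bound $|u(x)|$ by the nonlocal differences $|u(x)-u(y)|$ over points $y$ lying outside $\Omega$, where $u(y)=0$, and then apply Jensen's inequality to the convex function $\Phi$.

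\textbf{First inequality.} Let $d=\mathrm{diam}(\Omega)$, fix $x_0\in\Omega$, and set $B=B_{2d}(x_0)\setminus B_d(x_0)$. Then $B\subset\mathbb{R}^N\setminus\Omega$, and for every $x\in\Omega$ and $y\in B$ we have $d\le|x-y|\le 3d$. Since $u(y)=0$ on $B$,
$$
|u(x)|=\frac{1}{|B|}\int_B |u(x)-u(y)|\,\mathrm{d}y
=\frac{1}{|B|}\int_B |D_su(x,y)|\,|x-y|^{s}\,\mathrm{d}y
\le \frac{(3d)^s}{|B|}\int_B |D_su(x,y)|\,\mathrm{d}y .
$$
Apply the monotone convex function $\Phi$ and use Jensen's inequality with respect to the probability measure $\mathrm{d}y/|B|$:
$$
\Phi(|u(x)|)\le \frac{1}{|B|}\int_B \Phi\big((3d)^s|D_su|\big)\,\mathrm{d}y
\le \frac{(3d)^N}{|B|}\int_B \Phi\big((3d)^s|D_su|\big)\,\frac{\mathrm{d}y}{|x-y|^N}.
$$
Here $(3d)^N/|B|=3^N/\big(\omega_N(2^N-1)\big)=:A$ depends only on $N$. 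Integrating over $x\in\Omega$ gives
$$
\rho_\Phi(u)\le A\iint\limits_{\mathbb{R}^N\times\mathbb{R}^N}\Phi\big(\max\{1,3d\}\,|D_su|\big)\,\mathrm{d}\mu .
$$
To absorb the factor $A$ into the argument of $\Phi$: if $A\le1$ we are done. If $A>1$, convexity together with $\Phi(0)=0$ gives $A\,\Phi(t)\le\Phi(At)$. Hence the first claim holds with $C=A\max\{1,3d\}$, which does not depend on $s$.

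\textbf{Second inequality.} The factor $(1-s)$ inside $I_1$ is the real obstacle, since the estimate above gives no $(1-s)$. When $\Phi\in\Delta_2$ we have $\Phi(Ct)\le K_C\Phi(t)$, so the first part gives $\rho_\Phi(u)\le K_C\,(1-s)^{-1}I_1(u)$. This is uniform only for $s$ bounded away from $1$. For $s$ near $1$ we need a Bourgain--Brezis--Mironescu type argument, in the form of the Orlicz version proved in \cite{FBS}. That is, uniformly in $s\in[s_0,1)$ and under $\Delta_2$,
$$
\rho_\Phi(u)\le C\,(1-s)\iint\Phi(|D_su|)\,\mathrm{d}\mu ,
$$
obtained by comparison with the local modular Poincaré inequality $\int_\Omega\Phi(|u|)\le\int_\Omega\Phi(d|\nabla u|)$ through mollification. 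I expect this uniform-in-$s$ step to be the main difficulty. I would take it from \cite{FBS,Salort} rather than reprove it, and then combine the two ranges of $s$ to obtain a single constant $C$.
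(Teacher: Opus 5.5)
The paper gives no proof of this proposition; it only refers to \cite{FBS,Salort}. Your argument for the first inequality is complete and correct, and it gives more than is stated: your constant depends only on $N$ and $d=\mathrm{diam}(\Omega)$, not on $s$ or $\Phi$. Two small slips should be fixed. First, the claim $d\le|x-y|$ for $x\in\Omega$, $y\in B$ is false. For $\Omega=B_1(0)$, $x_0=0$, $x$ near $e_1$ and $y=2e_1\in B$, one has $|x-y|\approx 1<2=d$. You only ever use $|x-y|<3d$, so nothing breaks. Second, the constant must be $C=\max\{1,A\}\max\{1,3d\}$ rather than $A\max\{1,3d\}$. Indeed $A<1$ can occur (e.g.\ $A=3/\pi$ for $N=2$), and then shrinking the argument of $\Phi$ is not allowed.

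For the $\Delta_2$ part, your reduction for $s\le s_0$ is correct. However, the only nontrivial content, uniformity as $s\uparrow1$, is delegated to the literature, so this half is not actually proved. The paper is in the same position. Your mollification remark is not an argument as it stands.

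The step can be closed directly, for all $s\in(0,1)$ at once, by a dyadic doubling argument. Let $m=\sup_{t>0}t^2\phi(t)/\Phi(t)$, which is finite by $\Delta_2$, so that $\Phi(\lambda t)\le\lambda^m\Phi(t)$ for $\lambda\ge1$. Put $G_s(h)=\int_{\mathbb{R}^N}\Phi\big(|u(x+h)-u(x)|/|h|^s\big)\,dx$, so that $I_1(u)=(1-s)\int_{\mathbb{R}^N}G_s(h)\,|h|^{-N}\,dh$. Write $u(x+2h)-u(x)$ as a sum of two $h$-increments and use $|2h|^s=2^{s}|h|^s$. Convexity and translation invariance then give
\[
G_s(2h)\le\int_{\mathbb{R}^N}\Phi\Big(2^{1-s}\,\frac{|u(x+h)-u(x)|}{|h|^s}\Big)\,dx\le 2^{(1-s)m}\,G_s(h).
\]
If $d<|h'|\le 2d$, the supports of $u$ and $u(\cdot+h')$ are disjoint, hence $G_s(h')\ge 2\rho_\Phi\big(u/(2d)^s\big)$. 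For $0<|h|\le 2d$, pick $k\ge0$ with $2^k|h|\in(d,2d]$. Iterating the doubling bound gives $G_s(h)\ge(|h|/2d)^{(1-s)m}\,2\rho_\Phi\big(u/(2d)^s\big)$. Integrating in polar coordinates,
\[
I_1(u)\ge 2(1-s)\,|\mathbb{S}^{N-1}|\,\rho_\Phi\Big(\frac{u}{(2d)^s}\Big)\int_0^{2d}\Big(\frac{r}{2d}\Big)^{(1-s)m}\frac{dr}{r}=\frac{2|\mathbb{S}^{N-1}|}{m}\,\rho_\Phi\Big(\frac{u}{(2d)^s}\Big).
\]
Since $\rho_\Phi(u)\le\max\{1,2d\}^m\rho_\Phi\big(u/(2d)^s\big)$, you get $\rho_\Phi(u)\le\frac{m\max\{1,2d\}^m}{2|\mathbb{S}^{N-1}|}\,I_1(u)$ for every $s\in(0,1)$. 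There is no need to split the range of $s$.

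Your exterior-annulus argument cannot see this mechanism. The factor $(1-s)$ is compensated by $\int_0^{2d}r^{(1-s)m-1}\,dr=(2d)^{(1-s)m}/((1-s)m)$, that is, by the increments at all small scales.
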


As a consequence of Poincaré's Inequality, we have that the $(s,\Phi)$-Gagliardo seminorm $[\cdot]_ {s,\Phi}$ is an equivalent norm on $W_0^s L^\Phi(\Omega)$ to that one defined in \eqref{normafractional} so that the fractional Orlicz-Sobolev space $W_0^s L^\Phi(\Omega)$ will be equipped with $(s,\Phi)$-Gagliardo seminorm $[\cdot]_ {s,\Phi}$ throughout this paper.

About the functional $I_1$.
\begin{lem}[See Propositon 3.1 and 3.2 of \cite{Salort}]\label{I1-coer}
Let $\Phi$ be an N-function. Then $I_1$ is coercive and weak{*} lower semicontinuous.
\end{lem}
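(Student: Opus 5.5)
The statement to prove is Lemma \ref{I1-coer}: the modular $I_1(u)=(1-s)\iint\Phi(|D_s u|)\,\mathrm{d}\mu$ is coercive and weak$^*$ lower semicontinuous on $W_0^sL^\Phi(\Omega)$, normed by $[\cdot]_{s,\Phi}$. I will prove the two properties separately.

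\textbf{Coercivity.} The plan is the standard modular-versus-norm comparison. Set $\lambda=[u]_{s,\Phi}>1$. Convexity of $\Phi$ together with $\Phi(0)=0$ gives $\Phi(t/\lambda)\le \Phi(t)/\lambda$ for $\lambda\ge 1$. Suppose $I_1(u)\le\lambda$. Then
\[(1-s)\iint\Phi(|D_su|/\lambda')\,\mathrm{d}\mu\le I_1(u)/\lambda'<1\quad\text{for every }\lambda'\in(I_1(u),\lambda),\]
so $[u]_{s,\Phi}\le\lambda'<\lambda$, which is a contradiction. Hence $I_1(u)\ge [u]_{s,\Phi}$ whenever $[u]_{s,\Phi}>1$. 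This also covers the case $I_1(u)=+\infty$, which can occur in the non-$\Delta_2$ setting. Consequently $I_1(u)\to+\infty$ as $[u]_{s,\Phi}\to\infty$. By the Poincaré inequality (Proposition \ref{Poincare}), the seminorm is equivalent to the full norm on $W_0^sL^\Phi(\Omega)$, so this is coercivity in the norm of the space.

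\textbf{Weak$^*$ lower semicontinuity.} I will use the identification of $W^sL^\Phi(\Omega)$ with a closed subspace of $L^\Phi(\Omega)\times L^\Phi(\mathbb{R}^N\times\mathbb{R}^N;\mathrm{d}\mu)$, the latter being the dual of $E^{\tilde\Phi}(\Omega)\times E^{\tilde\Phi}(\mathrm{d}\mu)$. Under this identification, $u_n\stackrel{*}{\rightharpoonup}u$ implies $D_su_n\stackrel{*}{\rightharpoonup}D_su$ in $\sigma(L^\Phi(\mathrm{d}\mu),E^{\tilde\Phi}(\mathrm{d}\mu))$. It then suffices to represent the modular as a supremum of weak$^*$ continuous affine functionals. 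By Young's inequality and the duality $\widetilde{\widetilde\Phi}=\Phi$,
\[\iint\Phi(|w|)\,\mathrm{d}\mu=\sup\Big\{\iint wv\,\mathrm{d}\mu-\iint\tilde\Phi(|v|)\,\mathrm{d}\mu:\ v\in L^\infty\text{ with bounded support in }\mathbb{R}^{2N}\setminus\{x=y\}\Big\}.\]
Each such $v$ lies in $E^{\tilde\Phi}(\mathrm{d}\mu)$, because $\mathrm{d}\mu$ restricted to such a support is finite. Hence each map $w\mapsto\iint wv\,\mathrm{d}\mu-\iint\tilde\Phi(|v|)\,\mathrm{d}\mu$ is weak$^*$ continuous, and a supremum of weak$^*$ continuous functionals is weak$^*$ lower semicontinuous.

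\textbf{Main obstacle.} The step needing care is justifying the supremum formula, including the case where the left side is infinite. I will handle it by truncation. On sets $A_k$ exhausting $\{x\neq y\}$, with $\mu(A_k)<\infty$, take $v_k=\phi(|w_k|)w_k\chi_{A_k}$, where $w_k$ is $w$ truncated at level $k$. Young's equality gives
\[\iint w_kv_k\,\mathrm{d}\mu-\iint\tilde\Phi(|v_k|)\,\mathrm{d}\mu=\iint_{A_k}\Phi(|w_k|)\,\mathrm{d}\mu.\]
Since $|w_k|\le|w|$, $\Phi$ is increasing, and $w_kv_k\ge0$, we have $\iint wv_k\,\mathrm{d}\mu\ge\iint w_kv_k\,\mathrm{d}\mu$, so the functional evaluated at $v_k$ is at least $\iint_{A_k}\Phi(|w_k|)\,\mathrm{d}\mu$. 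Monotone convergence in $k$ then gives the formula.

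An alternative avoiding duality: weak$^*$ convergence in $W_0^sL^\Phi$ gives $u_n\to u$ in $L^1(\Omega)$ by Proposition \ref{211}, at least along a subsequence. Pass to a further subsequence realizing the $\liminf$ and converging a.e.; then $D_su_n\to D_su$ $\mu$-a.e., and Fatou's lemma yields $I_1(u)\le\liminf I_1(u_n)$. I would present the Fatou argument if the compact embedding hypotheses are available, and the duality argument otherwise.
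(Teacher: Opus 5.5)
The paper does not prove this lemma; it only cites Propositions 3.1 and 3.2 of Salort--Vivas. Your argument therefore has to stand on its own, and it does, up to two small fixes.

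\textbf{Coercivity.} The bound $I_1(u)\ge [u]_{s,\Phi}$ for $[u]_{s,\Phi}>1$ is the standard modular/Luxemburg comparison, and it is what the paper records as Proposition \ref{prop2.3}. Two details need adjusting.
\begin{itemize}
\item Argue by contradiction from $I_1(u)<\lambda$, not $I_1(u)\le\lambda$. If $I_1(u)=\lambda$, your interval $(I_1(u),\lambda)$ is empty.
\item Take $\lambda'\in(\max\{1,I_1(u)\},\lambda)$, because the inequality $\Phi(t/\lambda')\le\Phi(t)/\lambda'$ requires $\lambda'\ge 1$.
\end{itemize}

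\textbf{Weak$^*$ lower semicontinuity.} Your duality argument is correct. You write the modular as a Fenchel--Young supremum of $\sigma(L^\Phi(\mathrm{d}\mu),E^{\tilde\Phi}(\mathrm{d}\mu))$-continuous affine maps. The truncations $v_k=\phi(|w_k|)w_k\chi_{A_k}$ then give the reverse inequality, including when the modular is infinite. One precision is needed: define the admissible class as bounded functions supported in a set of finite $\mu$-measure, for example bounded and at positive distance from the diagonal. A bounded set that merely avoids $\{x=y\}$ can have infinite $\mu$-measure, and your claim that ``$\mathrm{d}\mu$ restricted to such a support is finite'' then fails.

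\textbf{Comparison with the Fatou route.} The duality route is the more general one:
\begin{itemize}
\item It works for an arbitrary $N$-function, which is how the lemma is stated.
\item It gives topological, not just sequential, weak$^*$ lower semicontinuity.
\end{itemize}
The Fatou alternative needs the compact embedding of Proposition \ref{211}, and hence condition \eqref{2.12}, to turn weak$^*$ convergence into a.e.\ convergence of $u_n$ and then of $D_su_n$. So present the duality argument as the proof, and keep Fatou as a remark valid under \eqref{2.12}.
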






The proof of below result can be found in \cite{Salort} and \cite{Sabri1}.

\begin{prop}\label{prop2.3}(See \cite{Sabri1,Salort})
Assume $(\phi_1)$ and $(\phi_2)$ hold. Then:
$$[u]_{s,\Phi}\to +\infty\qquad\Rightarrow\qquad  I_1(u)\to+\infty,~~~~u \in W_0^s L^\Phi(\Omega).$$
If, in addition, $\Phi\in \Delta_2$ holds, then
\begin{equation*}\label{i}
\min\{{[u]^{\ell}_{s,\Phi}, [u]_{s,\Phi}^{m}}\} \leq I_1(u) \leq \max\{ [u]^{\ell}_{s,\Phi},[u]_{s,\Phi}^{m}\}, \ \forall u \in W_0^sL^\Phi(\Omega).
\end{equation*}
\end{prop}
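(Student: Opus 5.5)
The statement has two parts. The first is that $[u]_{s,\Phi}\to\infty$ forces $I_1(u)\to\infty$. The second, under $\Delta_2$, is the two-sided comparison $\min\{[u]^\ell_{s,\Phi},[u]^m_{s,\Phi}\}\le I_1(u)\le\max\{[u]^\ell_{s,\Phi},[u]^m_{s,\Phi}\}$. Both follow from elementary properties of $\Phi$ applied pointwise to $|D_su|$ and integrated against $(1-s)\,\mathrm{d}\mu$. The space and the singular problem play no role.

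\textbf{First part.} The key fact is that convexity of $\Phi$ together with $\Phi(0)=0$ gives
\[
\Phi(\lambda t)\ge \lambda\Phi(t)\quad\text{for } \lambda\ge 1 .
\]
Fix $u$ with $\lambda:=[u]_{s,\Phi}>1$ and take any $1<\lambda'<\lambda$. By the definition of the Luxemburg-type seminorm as an infimum,
\[
(1-s)\iint\Phi\bigl(|D_su|/\lambda'\bigr)\,\mathrm{d}\mu>1 .
\]
Writing $|D_su|=\lambda'\cdot(|D_su|/\lambda')$ and applying the convexity bound gives
\[
I_1(u)\ge \lambda'(1-s)\iint\Phi\bigl(|D_su|/\lambda'\bigr)\,\mathrm{d}\mu>\lambda' .
\]
Letting $\lambda'\uparrow\lambda$ yields $I_1(u)\ge[u]_{s,\Phi}$ whenever $[u]_{s,\Phi}>1$. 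Hence $I_1(u)\to\infty$ as the seminorm blows up.

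\textbf{Second part.} Here I use the constants $\ell=\inf_{t>0} t^2\phi(t)/\Phi(t)$ and $m=\sup_{t>0} t^2\phi(t)/\Phi(t)$. Under $\Delta_2$ and the standing hypotheses, in the global form used in the paper, these satisfy $1\le\ell\le m<\infty$ (Proposition \ref{thm21}). Fix $t>0$ and consider $g(\sigma)=\log\Phi(\sigma t)$. Its derivative is
\[
g'(\sigma)=\frac{(\sigma t)^2\phi(\sigma t)}{\sigma\,\Phi(\sigma t)}\in\Bigl[\frac{\ell}{\sigma},\frac{m}{\sigma}\Bigr].
\]
Integrating from $1$ to $\sigma$ gives the standard bound
\[
\min\{\sigma^\ell,\sigma^m\}\Phi(t)\le\Phi(\sigma t)\le\max\{\sigma^\ell,\sigma^m\}\Phi(t),\qquad \sigma,t>0 .
\]
Let $\lambda=[u]_{s,\Phi}>0$; the case $u=0$ is trivial. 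Under $\Delta_2$ the modular is finite and continuous in the scaling parameter, so the infimum is attained with equality:
\[
(1-s)\iint\Phi\bigl(|D_su|/\lambda\bigr)\,\mathrm{d}\mu=1 .
\]
This step needs justification. By dominated convergence, using $\Delta_2$ to guarantee integrability for every scale, the map $\mu\mapsto(1-s)\iint\Phi(|D_su|/\mu)\,\mathrm{d}\mu$ is continuous and strictly decreasing from $\infty$ to $0$ on $(0,\infty)$.

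Now apply the pointwise bound with $\sigma=\lambda$ and $t=|D_su|/\lambda$, and integrate against $(1-s)\,\mathrm{d}\mu$:
\[
\min\{\lambda^\ell,\lambda^m\}\cdot 1\le I_1(u)\le\max\{\lambda^\ell,\lambda^m\}\cdot 1 .
\]
This is exactly the claim.

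\textbf{Main obstacle.} The main difficulty is justifying the equality in the modular at $\lambda=[u]_{s,\Phi}$. It needs finiteness of $\iint\Phi(|D_su|/\mu)\,\mathrm{d}\mu$ for all $\mu>0$, which follows from $u\in W^s_0L^\Phi$ together with the $\Delta_2$ iteration $\Phi(2^kt)\le K^k\Phi(t)$. It also needs the monotone and dominated convergence arguments for continuity. A secondary point is checking that $m<\infty$ holds globally rather than only near infinity, which is what the global $\Delta_2$ assumption provides. Without $\Delta_2$ only the first part survives, since $m$ may be infinite, as in the example $\Phi(t)=(e^{t^2}-1)/2$.
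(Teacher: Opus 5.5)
Your proof is correct. The paper gives no argument of its own and defers to the cited works, and yours is the standard one used there. Convexity with $\Phi(0)=0$ gives $\Phi(\lambda t)\ge\lambda\Phi(t)$ for $\lambda\ge 1$, hence $I_1(u)\ge[u]_{s,\Phi}$ whenever $[u]_{s,\Phi}>1$. Under $\Delta_2$, the scaling bounds $\min\{\sigma^\ell,\sigma^m\}\Phi(t)\le\Phi(\sigma t)\le\max\{\sigma^\ell,\sigma^m\}\Phi(t)$, together with the modular being exactly $1$ at $\lambda=[u]_{s,\Phi}$, give the two-sided estimate.

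The one point you only cited, global finiteness of $m$, is not covered by Proposition \ref{thm21}, which is stated only near infinity. It follows in one line from $(\phi_1)$ and the global $\Delta_2$ condition: $t^2\phi(t)\le\int_t^{2t}\tau\phi(\tau)\,\mathrm{d}\tau\le\Phi(2t)\le K\Phi(t)$, so $m\le K$.
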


{

\subsection{Auxiliary Lemmas} 

Below, let us state and prove two important results in our strategy of proving Theorem \ref{pricipal-1}. The first result is important to prove that the unique weak solution of \eqref{Pa} is positive, while the second one generalizes a classical result for Sobolev spaces to our workspace.

\begin{lem}\label{Phi}
Let $\Phi$ be an N-function, and $a,b \geq 0$ with $a\not=b$. Then:
\begin{enumerate}
    \item[(i)] one has 
    $$\displaystyle {\Phi(a)-\Phi(b)} \leq \big(\phi(a)a +\phi(b)b\big)({a-b}).$$
    
    \item[(ii)] one has 
    $$\phi\left((1-t )a + tb \right)\left((1-t )a + tb \right)\leq \phi(a)a + \phi(b)b, \ \forall\  0\leq t\leq1.$$
\end{enumerate}
\end{lem}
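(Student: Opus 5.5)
Both items follow from the monotonicity of $t\mapsto \phi(t)t=\Phi'(t)$ on $(0,\infty)$ given by $(\phi_1)$, together with $\phi(t)t\ge 0$. Throughout we use $\Phi(t)=\int_0^{t}\phi(\tau)\tau\,\mathrm{d}\tau$ for $t\ge 0$, and we interpret $\phi(0)\cdot 0=0$, which is consistent with $(\phi_2)$.

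For item (i), we write
$$\Phi(a)-\Phi(b)=\int_b^a \phi(\tau)\tau\,\mathrm{d}\tau$$
and split into two cases.
\begin{itemize}
\item If $a>b$, then for every $\tau\in[b,a]$ monotonicity gives $\phi(\tau)\tau\le \phi(a)a$. Hence $\Phi(a)-\Phi(b)\le \phi(a)a\,(a-b)$. Since $\phi(b)b\,(a-b)\ge 0$ in this case, we obtain $\Phi(a)-\Phi(b)\le(\phi(a)a+\phi(b)b)(a-b)$.
\item If $a<b$, then $\Phi(a)-\Phi(b)=-\int_a^b\phi(\tau)\tau\,\mathrm{d}\tau$. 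For $\tau\in[a,b]$ we have $\phi(\tau)\tau\ge\phi(a)a$, so $\Phi(a)-\Phi(b)\le -\phi(a)a\,(b-a)=\phi(a)a\,(a-b)$. In fact we want to compare with $(\phi(a)a+\phi(b)b)(a-b)$, which is nonpositive here, so a finer bound is needed. Using $\phi(\tau)\tau\le\phi(b)b$ on $[a,b]$ only gives a lower bound $\Phi(a)-\Phi(b)\ge -\phi(b)b(b-a)$, which goes the wrong way.
\end{itemize}

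So in the case $a<b$ the inequality as stated would require
$$\int_a^b\phi(\tau)\tau\,\mathrm{d}\tau\ \ge\ (\phi(a)a+\phi(b)b)(b-a).$$
This fails in general: for $\Phi(t)=t^2/2$, $a=0$, $b=1$, the left side is $1/2$ and the right side is $1$. Hence the intended reading is presumably a bound on the absolute value, $|\Phi(a)-\Phi(b)|\le(\phi(a)a+\phi(b)b)|a-b|$. That version follows immediately from the integral representation: on the interval between $a$ and $b$ we have $\phi(\tau)\tau\le\max\{\phi(a)a,\phi(b)b\}\le\phi(a)a+\phi(b)b$. I would prove this symmetric version and check in the later application that only the case $a\ge b$, or the absolute-value form, is actually used.

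Item (ii) is the easy part. The point $c=(1-t)a+tb$ lies between $\min\{a,b\}$ and $\max\{a,b\}$. By monotonicity, $\phi(c)c\le \phi(\max\{a,b\})\max\{a,b\}$. This quantity equals one of $\phi(a)a$, $\phi(b)b$, and since both terms are nonnegative it is at most their sum. The endpoint cases $c=0$ (only possible when $a=0$ or $b=0$) are covered by the convention $\phi(0)\cdot0=0$.

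The main obstacle is the sign issue in (i) when $a<b$. My plan there is to state and prove the absolute-value form of the inequality, or to restrict to $a>b$, which is exactly what the integral-monotonicity argument yields.
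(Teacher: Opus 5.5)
Your proposal is correct, and your diagnosis of (i) is right. In fact (i) fails for every N-function and every $0\le a<b$. Indeed, $\Phi(b)-\Phi(a)=\int_a^b\phi(\tau)\tau\,\mathrm{d}\tau<(\phi(a)a+\phi(b)b)(b-a)$: the integrand is at most $\phi(b)b$, and the inequality is strict because $\phi(a)a>0$ if $a>0$, or because $\tau\phi(\tau)\to0$ if $a=0$.

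The paper's proof is your monotonicity argument in mean-value form: $\frac{\Phi(a)-\Phi(b)}{a-b}=\phi(c)c\le\phi(\max\{a,b\})\max\{a,b\}\le\phi(a)a+\phi(b)b$ with $c$ between $a$ and $b$. It contains exactly the sign slip you found, since multiplying back by $a-b$ is only legitimate when $a>b$. What that argument genuinely proves is $|\Phi(a)-\Phi(b)|\le(\phi(a)a+\phi(b)b)|a-b|$. This symmetric form is also what the paper actually uses in \eqref{0.3}, combined with $\bigl||D_sv_t|-|D_su|\bigr|\le|D_sv_t-D_su|$. Both orderings of $|D_sv_t|$ and $|D_su|$ occur there, so proving the absolute-value version, as you propose, is the right repair.

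Your proof of (ii) coincides with the paper's.
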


\begin{proof} Spite of the proofs follow from classical arguments, let us present them for the convenience
 of the readers. 
\begin{enumerate}
%
%
%
\item[(i)] As above and using  that $s\mapsto s\phi(s)$ is increasing  
\begin{eqnarray*}
\begin{array}{lll}
    \displaystyle\frac{\Phi(a)-\Phi(b)}{a-b} & \leq &   \phi((1-\theta)a +\theta b)[(1-\theta)a+\theta b] \\\\
     & \leq & \displaystyle \phi((1-\theta)\max\{a,b\} +\theta \max\{a,b\})[(1-\theta)\max\{a,b\}+\theta \max\{a,b\}]\\ \\
     &= & \displaystyle \phi(\max\{a,b\})\max\{a,b\} \leq \phi(a)a+\phi(b)b,
\end{array}
\end{eqnarray*}
for some $\theta \in (0,1)$.
\item[(ii)] Here, we can use the same ideas of item i).
\end{enumerate} This ends the proof.
\end{proof}

The next result guarantee that the positive and negative parts of $u \in W_0^sL^\Phi(\Omega)$ belongs to $W_0^s L^\Phi(\Omega).$

\begin{lem}\label{u+}
Given $u \in W_0^sL^\Phi(\Omega)$, then $u^+,u^- \in W_0^sL^\Phi(\Omega),$ where

\begin{equation*}
\displaystyle \left\{\begin{array}{lcr}   \displaystyle u^+=\max\{u,0 \};\\ 
\displaystyle u^-=\max\{-u,0 \};   \\ 
\displaystyle u=u^+ - u^-.
\end{array}
\right.
\end{equation*}

\end{lem}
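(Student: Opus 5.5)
The plan is to prove the statement for $u^+$ first; the case of $u^-$ follows from $u^-=(-u)^+$ and $-u\in W_0^sL^\Phi(\Omega)$. The argument has two parts: membership of $u^+$ in $W^sL^\Phi(\Omega)$, and then membership in the weak$^*$ closure of $C_0^\infty(\Omega)$.

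\emph{Membership in $W^sL^\Phi(\Omega)$.} We have pointwise $|u^+(x)-u^+(y)|\le |u(x)-u(y)|$, since $t\mapsto t^+$ is $1$-Lipschitz. Hence $|D_su^+|\le |D_su|$ $\mu$-a.e. Because $\Phi$ is increasing on $[0,\infty)$, for every $\lambda>0$ we get
$$(1-s)\iint_{\mathbb{R}^N\times\mathbb{R}^N}\Phi\Big(\frac{|D_su^+|}{\lambda}\Big)d\mu\le (1-s)\iint_{\mathbb{R}^N\times\mathbb{R}^N}\Phi\Big(\frac{|D_su|}{\lambda}\Big)d\mu .$$
It follows that $[u^+]_{s,\Phi}\le [u]_{s,\Phi}$. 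Similarly, $\|u^+\|_\Phi\le\|u\|_\Phi$, and $u^+=0$ outside $\Omega$. So $u^+\in W^sL^\Phi(\Omega)$ with $\|u^+\|_{s,\Phi}\le \|u\|_{s,\Phi}$.

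\emph{Membership in $W_0^sL^\Phi(\Omega)$.} Since $u\in W^s_0L^\Phi(\Omega)$ is a weak$^*$ limit of elements of $C_0^\infty(\Omega)$, I would extract a sequence $\varphi_n\in C_0^\infty(\Omega)$ with $\varphi_n\stackrel{*}{\rightharpoonup}u$. This is legitimate because the predual $E^{\widetilde\Phi}\times E^{\widetilde\Phi}(d\mu)$ is separable, so bounded sets are weak$^*$ metrizable. To guarantee boundedness of the approximating family, I would replace a general net by a sequence taken from a bounded set. One option is mollification and cut-off, which gives bounded approximants converging in modular. Alternatively, one can invoke the Krein–Šmulian type fact that a convex set in the dual of a separable space is weak$^*$ closed iff its intersection with balls is.

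Next, $\varphi_n^+$ is Lipschitz with compact support in $\Omega$. Mollifying it slightly, $\psi_n=\varphi_n^+*\eta_{\varepsilon_n}$, produces functions in $C_0^\infty(\Omega)$. These satisfy $\|\psi_n\|_{s,\Phi}\le \|\varphi_n^+\|_{s,\Phi}\le\|\varphi_n\|_{s,\Phi}$, because convolution with a probability kernel does not increase the modulars, by Jensen's inequality. Moreover $\psi_n-\varphi_n^+\to0$ uniformly. So $(\psi_n)$ is bounded, and Lemma~\ref{Estrela} (or Banach–Alaoglu applied directly) gives a subsequence with $\psi_n\stackrel{*}{\rightharpoonup}v$ in $W^s_0L^\Phi(\Omega)$.

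It remains to identify $v=u^+$. By Proposition~\ref{211} the embedding into $L^\Phi(\Omega)$ is compact. The compact embedding is stated for the Sobolev-type hypothesis; otherwise one uses local compactness in $L^1$ from the fractional Rellich argument. Hence $\varphi_n\to u$ in $L^1(\Omega)$, so $\varphi_n^+\to u^+$ in $L^1(\Omega)$, and therefore $\psi_n\to u^+$ in $L^1(\Omega)$. Weak$^*$ convergence in $L^\Phi(\Omega)$ and strong $L^1$ convergence have compatible limits: test against $L^\infty\subset E^{\widetilde\Phi}$. This gives $v=u^+$, and then the second component is $D_sv=D_su^+$, so $u^+\in W_0^sL^\Phi(\Omega)$.

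\emph{Main obstacle.} The delicate step is the first one of the second part: producing a \emph{bounded} sequence in $C_0^\infty(\Omega)$ converging weak$^*$ to $u$, since the weak$^*$ closure is a priori defined via nets. I would handle this with the separability and metrizability argument above. Failing that, I would show directly, by truncation, cut-off and mollification, that $C_0^\infty(\Omega)$ approximates any $u\in W^s_0L^\Phi(\Omega)$ in the modular sense $I_1((u-\varphi_n)/\lambda)\to0$ for some $\lambda$. Modular convergence implies boundedness and weak$^*$ convergence.
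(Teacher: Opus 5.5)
Your first part matches the paper's. Your second part follows the paper's skeleton: approximate $u$ by a weak$^*$ convergent sequence from $C_0^\infty(\Omega)$, then pass to positive parts. You carry out the passage to positive parts correctly, via Jensen/modular bounds for $\varphi_n^+$ and its mollifications, Banach--Alaoglu, and identification of the limit through $L^1$-compactness. The paper instead appeals to dominated convergence, which weak$^*$ convergence alone does not justify. Your compactness step also does not need \eqref{2.12}: since $\Phi(t)\ge\Phi(1)t$ for $t\ge 1$, a modular bound becomes a $W^{\sigma,1}$ bound, for any $\sigma<s$, on functions vanishing outside $\Omega$.

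The genuine gap is the step you flag yourself: that every $u$ in the weak$^*$ closure of $Y=C_0^\infty(\Omega)$ is the weak$^*$ limit of a \emph{sequence} from $Y$. Separability of the predual only makes the weak$^*$ topology metrizable on \emph{bounded} sets. A net converging weak$^*$ to $u$ need not be bounded, so it cannot simply be replaced by a sequence. Krein--\v{S}mulian does not supply this either. It characterizes weak$^*$ closed convex sets by their intersections with balls. But the set $Y^{(1)}$ of weak$^*$ sequential limits of $Y$ equals $\bigcup_{r>0}\overline{Y\cap rB}^{\,w^*}$ ($B$ the closed unit ball). Showing $Y^{(1)}\cap rB$ closed would need a diagonal argument in which the approximants of $z_k\in Y^{(1)}\cap rB$ may have norms $R_k\to\infty$. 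In fact, for subspaces of duals of separable spaces, $Y^{(1)}$ need not be weak$^*$ closed; this is Banach's phenomenon of transfinite weak$^*$ sequential closures. So, as written, you prove $u^+\in W_0^sL^\Phi(\Omega)$ only for $u\in Y^{(1)}$. The paper's proof asserts the same sequence without justification.

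Your fallback, modular density of $C_0^\infty(\Omega)$ in the spirit of Gossez, would close the gap, but it is a substantial theorem you only gesture at. A cheaper repair uses only what you have already checked.
\begin{itemize}
\item Put $S=\{w\in W_0^sL^\Phi(\Omega):\ w^+\in W_0^sL^\Phi(\Omega)\}$. Your mollification step gives $Y\subset S$.
\item Run your boundedness/Banach--Alaoglu/$L^1$-compactness step with arbitrary $u_n\in S$ in place of $\varphi_n$; no mollification is needed, since $W_0^sL^\Phi(\Omega)$ is weak$^*$ closed. This shows $S$ is closed under weak$^*$ limits of sequences.
\item Let $Y^{(0)}=Y$, let $Y^{(\alpha+1)}$ be the set of weak$^*$ limits of sequences from $Y^{(\alpha)}$, and take unions at limit ordinals. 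Transfinite induction gives $Y^{(\alpha)}\subset S$ for every $\alpha$.
\item Every sequence in $Y^{(\omega_1)}$ lies in a single $Y^{(\beta)}$ with $\beta<\omega_1$, so $Y^{(\omega_1)}$ is a weak$^*$ sequentially closed subspace.
\item By metrizability of balls, its intersection with each ball is weak$^*$ closed. Krein--\v{S}mulian then makes $Y^{(\omega_1)}$ weak$^*$ closed, i.e.\ $Y^{(\omega_1)}=W_0^sL^\Phi(\Omega)\subset S$.
\end{itemize}
Equivalently: $u\mapsto u^+$ maps balls into balls and is weak$^*$ continuous on them, hence continuous for the bounded weak$^*$ topology. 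Since that topology gives subspaces the same closure as the weak$^*$ topology, the inclusion $\{\varphi^+:\varphi\in Y\}\subset W_0^sL^\Phi(\Omega)$ passes to the closure.
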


\begin{proof}
Given $u \in W_0^sL^\Phi(\Omega)$, we have 
$$\displaystyle\iint\limits_{\mathbb{R}^N \times \mathbb{R}^N}\Phi\left(\frac{|D_s u|}{\lambda}\right)\mathrm{d}\mu<\infty~\mbox{for some } \lambda>0,$$ 
which implies by the facts 
$$|D_su^+|=\frac{|u^+(y)-u^+(x)|}{|x-y|^s}\leq |D_su|$$
and $\Phi$ be increasing that
$$\displaystyle \iint\limits_{\mathbb{R}^N\times \mathbb{R}^N}\Phi\left(\frac{|D_s u^+|}{\lambda}\right)\mathrm{d}\mu \leq \iint\limits_{\mathbb{R}^N \times \mathbb{R}^N}\Phi\left(\frac{|D_s u|}{\lambda}\right)\mathrm{d}\mu\;\;<\;\;\infty.$$
Hence, $u^+ \in W_0^s L^\Phi (\Omega).$ To conclude that $u^+\in W_0^sL^\Phi(\Omega)$, it remains to justify that there exists a $(v_n) \subset C^{\infty}_0(\Omega)$ such that $v_n\stackrel{*}{\rightharpoonup}u^+$. In fact, since $u\in W_0^sL^\Phi(\Phi)$, exists a $(u_n)\subset W_0^sL^\Phi(\Omega)$ such that $u_n\stackrel{*}{\rightharpoonup}u$. So, due to the dominated convergence theorem, we have  
$$\int\limits_\Omega u_n^+v dx\to \int\limits_\Omega u^+v dx\qquad\mbox{and}\qquad\iint\limits_{\mathbb{R^N}\times\mathbb{R^N}}\phi(|D_s u_n^+|)D_s u_n^+D_s vd\mu\to \iint\limits_{\mathbb{R^N}\times\mathbb{R^N}}\phi(|D_s u^+|)D_s u^+D_s vd\mu$$
for all $v\in E^{\widetilde{\Phi}}(\Omega)$. The Proof for $u^-$ is similar. This ends the proof.
\end{proof}

\section{Proof of Theorems \ref{pricipal-1} and \ref{pricipal-2}}\label{cap-existence}

Consider the associated functional to Problem \eqref{Pa} $I:W_0^s L^\Phi (\Omega)\to \mathbb{R}\cup\{+\infty\}$, defined by
$$I(u)=(1-s)\iint\limits_{\mathbb{R}^N \times \mathbb{R}^N}\Phi(|D_su|)\mathrm{d}\mu-\frac{1}{1-\gamma}\int\limits_{\Omega} |u|^{1-\gamma}\mathrm{d}x:=I_1(u)+I_2(u),~u\in W_0^s L^\Phi(\Omega),$$
where $I_1: W_0^s L^\Phi(\Omega)\to \mathbb{R}\cup \{+\infty\}$ and $I_2: L^\Phi(\Omega) \to \mathbb{R}$ are defined by
$$I_1(u):=(1-s)\iint\limits_{\mathbb{R}^N \times \mathbb{R}^N}\Phi(|D_su|)\mathrm{d}\mu~\mbox{and}~I_2(u):=-\frac{1}{1-\gamma}\int\limits_{\Omega} |u|^{1-\gamma}\mathrm{d}x$$
with $0<s,\gamma<1$.

\begin{prop}\label{I2continuous}
$I_2$ is well-defined on $L^\Phi(\Omega)$ and a continuous functional.
\end{prop}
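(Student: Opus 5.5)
The plan is to reduce everything to the continuous embedding $L^\Phi(\Omega)\hookrightarrow L^1(\Omega)$ of Proposition \ref{LPhicontinuousL1} and to the elementary inequality
$$\big||a|^{1-\gamma}-|b|^{1-\gamma}\big|\le |a-b|^{1-\gamma},\qquad a,b\in\mathbb{R},$$
which holds because $t\mapsto t^{1-\gamma}$ is concave on $[0,\infty)$, vanishes at $0$, and is therefore subadditive.

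\emph{Well-definedness.} For $u\in L^\Phi(\Omega)$, Proposition \ref{LPhicontinuousL1} gives $u\in L^1(\Omega)$ with $\|u\|_1\le C\|u\|_\Phi$. Since $\Omega$ is bounded and $0<1-\gamma<1$, Hölder's inequality with exponents $\frac{1}{1-\gamma}$ and $\frac{1}{\gamma}$ (equivalently, Jensen's inequality for the concave map $t\mapsto t^{1-\gamma}$) yields
$$\int_\Omega |u|^{1-\gamma}\,\mathrm{d}x\le |\Omega|^{\gamma}\Big(\int_\Omega|u|\,\mathrm{d}x\Big)^{1-\gamma}\le |\Omega|^{\gamma}C^{1-\gamma}\|u\|_\Phi^{1-\gamma}<\infty .$$
Hence $I_2(u)$ is a finite real number.

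\emph{Continuity.} Let $u_n\to u$ in $L^\Phi(\Omega)$. Combining the pointwise inequality above with the same Hölder/embedding estimate, now applied to $u_n-u$,
$$|I_2(u_n)-I_2(u)|\le \frac{1}{1-\gamma}\int_\Omega |u_n-u|^{1-\gamma}\,\mathrm{d}x\le \frac{|\Omega|^{\gamma}C^{1-\gamma}}{1-\gamma}\,\|u_n-u\|_\Phi^{1-\gamma}\longrightarrow 0 .$$
This estimate also shows that $I_2$ is Hölder continuous of order $1-\gamma$ on $L^\Phi(\Omega)$.

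No step is a real obstacle. The only points to state carefully are the subadditivity of $t^{1-\gamma}$ and the fact that the embedding constant into $L^1$ is uniform, which Proposition \ref{LPhicontinuousL1} supplies since $|\Omega|<\infty$. Note that no $\Delta_2$ assumption on $\Phi$ is needed anywhere.
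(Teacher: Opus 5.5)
Your proof is correct, and it reaches continuity by a different and more efficient route than the paper. The well-definedness step is identical to the paper's: Hölder with exponents $1/(1-\gamma)$ and $1/\gamma$, combined with the embedding $L^\Phi(\Omega)\hookrightarrow L^1(\Omega)$, gives $\int_\Omega|u|^{1-\gamma}\,\mathrm{d}x\le \tilde C\|u\|_\Phi^{1-\gamma}$.

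For continuity, the paper argues qualitatively:
\begin{itemize}
\item from $u_n\to u$ in $L^\Phi(\Omega)$ it passes to modular convergence $\int_\Omega\Phi(|u_n-u|)\,\mathrm{d}x\to 0$;
\item it invokes the converse of the dominated convergence theorem to get $|u_n-u|\le\Phi^{-1}(h)$ with $h\in L^1(\Omega)$;
\item it uses $\Phi^{-1}(t)\le t$ for large $t$, together with $|u_n|^{1-\gamma}\le|u_n-u|^{1-\gamma}+|u|^{1-\gamma}$, to build an integrable majorant;
\item it concludes by dominated convergence.
\end{itemize}
As written, that argument has three implicit points. It must pass to a subsequence, since both the domination and the a.e. convergence $u_n\to u$ hold only along one. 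It then needs a subsequence-of-subsequence step to return to the full sequence. Finally, $\Phi^{-1}(h)$ is only bounded by $\max\{h,\Phi^{-1}(T)\}$, not by $h$ itself.

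You use the same subadditivity of $t\mapsto t^{1-\gamma}$, but symmetrically, in the form $\bigl||a|^{1-\gamma}-|b|^{1-\gamma}\bigr|\le|a-b|^{1-\gamma}$. You then feed $u_n-u$ straight into the Hölder/embedding bound. This avoids all three points above. It also gives a stronger, quantitative conclusion: $I_2$ is Hölder continuous of order $1-\gamma$ on $L^\Phi(\Omega)$, with no $\Delta_2$ assumption.

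The paper's route does offer something in exchange: generality. It is the standard Nemytskii-operator argument, so, once the subsequence details are filled in, it adapts to any continuous integrand of at most linear growth. Your argument instead exploits the specific concave power structure of $t^{1-\gamma}$.
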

\begin{proof}
First, we note that $I_2$ is well defined. In fact, it follows from  Holder's inequality  and Proposition \ref{LPhicontinuousL1} that  
\begin{eqnarray}\label{L1}
\begin{array}{lcl}
    \displaystyle \int\limits_{\Omega}|u|^{1-\gamma}\mathrm{d}x &  \leq  & \displaystyle  \left[\int\limits_\Omega 1^{\frac{1}{\gamma}}\mathrm{d}x \right]^\gamma \left[\int\limits_\Omega |u|^{1-\gamma \frac{1}{1-\gamma}}\mathrm{d}x \right]^{1-\gamma} \\
     &  =  & \displaystyle  |\Omega|^\gamma ||u||_{L^1(\Omega)}^{1-\gamma} 
       \leq   \displaystyle \tilde{C}  ||u||_\Phi^{1-\gamma} \\
\end{array}
\end{eqnarray}  
for all $u \in L^\Phi(\Omega)$, where $0<\tilde{C}=|\Omega|^\gamma k< + \infty$ with $k>0$ being Poincaré's constant.

Furthermore, we have that $I_2$ is continuous. In fact, let $u_n \rightarrow u$ in $L^\Phi(\Omega)$. Then 
$$\displaystyle \int\limits_\Omega \Phi \left(\frac{u_n -u}{\lambda} \right)\mathrm{d}x \longrightarrow 0, \forall \lambda>0,$$
that is,
 $\displaystyle \Phi \left({u_n -u} \right) \longrightarrow 0$ in $L^1(\Omega)$. So, by 
inverse of the Dominated Convergence Theorem
there exists $h \in L^1(\Omega)$ such that 
\begin{equation}\label{3.1}
|u_n - u| \leq \Phi^{-1}(h).    
\end{equation}

Now, since $\displaystyle \lim_{t \rightarrow +\infty}{\Phi(t)}/t=+\infty$, there exists $T>0$ such that $t \leq \Phi(t), \forall t>T$, or equivalently 
\begin{equation}\label{3.2}
\Phi^{-1}(t) \leq t, \forall t >T, 
\end{equation}
which implies together with \eqref{3.1}, \eqref{3.2} and   $t \mapsto t^{1-\gamma}$ be concave for $t>0$, that
$$|u_n|^{1-\gamma}\leq |u_n-u|^{1-\gamma}+|u|^{1-\gamma} \leq [\Phi^{-1}(h)]^{1-\gamma}+|u|^{1-\gamma} \leq h^{1-\gamma}+|u|^{1-\gamma},$$
whence follows that $|u|^{1-\gamma} \in L^1(\Omega)$ due to 
 $h \in L^1(\Omega)$
and \eqref{L1}. So, we are ready to use the Dominated Convergence Theorem to obtain 
$$\displaystyle \lim_{n \rightarrow +\infty}\int\limits_\Omega |u_n|^{1-\gamma}\mathrm{d}x =\int\limits_\Omega  \lim_{n \rightarrow +\infty}|u_n|^{1-\gamma}\mathrm{d}x = \int\limits_\Omega |u|^{1-\gamma}\mathrm{d}x,$$
that is, $I_2(u_n) \rightarrow I_2(u)$ in $\mathbb{R}$. This ends the proof.
\end{proof}

Below, we point out some consequences on the energy functional from the fact that $\Phi$ may not belongs to $\Delta_2$. First, we note that under our hypotheses, it is possible 
$$m=\displaystyle \sup_{t>0}\frac{\phi(t)t^2}{\Phi(t)}=\infty$$ 
to occur so that $I_1(u)<\infty$ for all $u \in W_0^s L^\Phi(\Omega)$ does not occur anymore. From this, we need to define the effective domain 
$$D_\Phi:=\{u\in W_0^s L^\Phi(\Omega):~I_1(u)<\infty\}$$
to work variationally on it.  Just to remind,  we have $D_\Phi=  W_0^s L^\Phi (\Omega)$ if, and only if, $\Phi\in \Delta_2$.

Second, our assumptions admits the situation 
$${\tilde{\ell}}=\inf\limits_{t>0}\frac{\tilde{\Phi}'(t)t}{\tilde{\Phi}(t)} = 1$$
as well, whence is not possible to guarantee the finitude of the second integral below 
$$\iint\limits_{\mathbb{R}^N \times \mathbb{R}^N}\tilde{\Phi}(\phi(|D_s u||D_s u|)\mathrm{d}\mu \leq \iint\limits_{\mathbb{R}^N \times \mathbb{R}^N}\Phi(2|D_s u|)\mathrm{d}\mu $$
anymore. From now on, let us denote by 
 $D_{\Tilde{\Phi}} \subset  W^s_0 L^\Phi(\Omega)$ the set
$$
\begin{array}{ccl}
     D_{\Tilde{\Phi}} &=& \displaystyle  \left\{ u \in  W^s_0 L^\Phi(\Omega)\,:\, (1-s) \iint\limits_{\mathbb{R}^N \times \mathbb{R}^N}\widetilde{\Phi}(\phi(|D_s u|)|D_su |)\,d\mu<\infty \right\}\\\\
     & = & \displaystyle \left\{ u \in   W^s_0 L^\Phi(\Omega)\,:\, \phi(|D_s u|)|D_s u|  \in \mathcal{L}^{\tilde{\Phi}}(\Omega) \right\}.
\end{array}
$$
It is interesting to point out that if $\Phi \in \Delta_2(\infty)$ then $D_{\Tilde{\Phi}}=W_0^sL^\Phi(\Omega)$.

\begin{rmk} One has:
\begin{enumerate}
    \item[i)] $C_0^{\infty}(\Omega) \subset D_{\Tilde{\Phi}}$,
    \item[ii)] $ D_\Phi$ is a convex subset of $W^s_0 L^\Phi(\Omega)$.
\end{enumerate}
\end{rmk}

The below lemmas will ease the proof of the Theorem \ref{pricipal-1}.

\begin{lem}\label{coercphi3}Assume the assumptions of Theorem \ref{pricipal-1}.
Then $I$ is coercive and so bounded from below.
\end{lem}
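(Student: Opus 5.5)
The plan is to compare the growth of $I_1$, which is superlinear in the seminorm, with the sublinear growth of $I_2$. First, by Proposition \ref{I2continuous} (estimate \eqref{L1}) together with the continuous embedding of $W_0^sL^\Phi(\Omega)$ into $L^\Phi(\Omega)$ (Poincar\'e, Proposition \ref{Poincare}, and Proposition \ref{211}), we get
$$|I_2(u)|\leq \frac{\tilde C}{1-\gamma}\|u\|_\Phi^{1-\gamma}\leq C_1[u]_{s,\Phi}^{1-\gamma},\qquad u\in W_0^sL^\Phi(\Omega).$$
Second, we obtain a linear lower bound for $I_1$ when $[u]_{s,\Phi}>1$. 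This uses only convexity and $\Phi(0)=0$, with no $\Delta_2$ assumption. For $\lambda=[u]_{s,\Phi}>1$, convexity gives $\Phi(t/\lambda)\le \Phi(t)/\lambda$. If $I_1(u)<\lambda$ held, then $(1-s)\iint\Phi(|D_su|/\lambda')\,d\mu\le I_1(u)/\lambda'<1$ for some $\lambda'<\lambda$ close to $\lambda$, contradicting the definition of the infimum. Hence
$$I_1(u)\geq [u]_{s,\Phi}\quad\text{whenever }[u]_{s,\Phi}>1,$$
and this holds trivially when $I_1(u)=+\infty$.

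Combining the two bounds gives
$$I(u)\ge [u]_{s,\Phi}-C_1[u]_{s,\Phi}^{1-\gamma}\qquad\text{for }[u]_{s,\Phi}>1.$$
Since $0<1-\gamma<1$, the right-hand side tends to $+\infty$ as $[u]_{s,\Phi}\to\infty$, so $I$ is coercive. This is consistent with Proposition \ref{prop2.3}, but it supplies the explicit rate needed to beat the $(1-\gamma)$-power.

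For boundedness from below, take $[u]_{s,\Phi}\le1$. Then $I_1\ge0$ and $|I_2(u)|\le C_1$, so $I(u)\ge -C_1$. For $[u]_{s,\Phi}>1$, the function $t\mapsto t-C_1t^{1-\gamma}$ is bounded from below on $[1,\infty)$. Hence $\inf I>-\infty$.

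The only delicate point is that $I$ may take the value $+\infty$ outside $D_\Phi$. These values only help the lower bound, so no extra care is needed beyond noting that the inequalities hold in $\mathbb{R}\cup\{+\infty\}$. The main step is the linear lower bound on $I_1$; if one prefers, it can be replaced by citing Proposition \ref{prop2.3} together with the superlinearity $\Phi(t)/t\to\infty$.
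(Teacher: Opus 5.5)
Your proof is correct, but it takes a different route from the paper's.

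The paper works entirely with modulars. It uses the superlinearity $\Phi(t/C)\ge t$ for $t\ge T$, H\"older, and the modular Poincar\'e inequality of Proposition \ref{Poincare} to get $\int_\Omega |u|^{1-\gamma}\,dx\le c\,I_1(u)^{1-\gamma}+c'$. Hence $I(u)\ge I_1(u)-c\,I_1(u)^{1-\gamma}-c'$. It then needs only the qualitative statement of Proposition \ref{prop2.3}, namely $[u]_{s,\Phi}\to\infty\Rightarrow I_1(u)\to\infty$, to conclude.

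You instead work with norms. You bound $I_2$ by $|I_2(u)|\le C_1[u]_{s,\Phi}^{1-\gamma}$, via \eqref{L1} and the norm form of Poincar\'e. You then bound $I_1$ from below by the elementary Luxemburg fact $I_1(u)\ge [u]_{s,\Phi}$ for $[u]_{s,\Phi}>1$. For the norm form of Poincar\'e, Proposition \ref{211} and its hypothesis \eqref{2.12} are not needed. The estimate $\|u\|_\Phi\le \frac{C}{1-s}[u]_{s,\Phi}$ already follows from Proposition \ref{Poincare} and convexity alone.

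What each route buys:
\begin{itemize}
\item Yours is self-contained: your Step 2 is in effect a quantitative proof of the first half of Proposition \ref{prop2.3}, which the paper only cites. It also gives an explicit coercivity rate in the norm.
\item The paper's gives a lower bound in terms of $I_1$ itself, avoids norm embeddings, and is the same mechanism it reuses later in \eqref{desig}.
\end{itemize}

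One caveat concerns your closing remark that Step 2 could be replaced by citing Proposition \ref{prop2.3}. With your norm bound on $I_2$, that swap does not work. The divergence $I_1(u)\to\infty$ in Proposition \ref{prop2.3} carries no rate, so by itself it does not beat $C_1[u]_{s,\Phi}^{1-\gamma}$. The substitution is valid only if you also switch to the paper's modular bound of $I_2$ by $I_1^{1-\gamma}$.
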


\begin{proof}
Since $\displaystyle\lim_{t\to \infty}\frac{\Phi(\lambda t)}{t}=\infty$, for all $\lambda>0$,  there exists $T_\lambda>0$ such that     
$$\Phi(\lambda t)\geq  t,~t\geq T_\lambda.$$
From this, Holder's Inequality and Proposition \ref{Poincare} we deduce
\begin{eqnarray}
    \int\limits_\Omega |u|^{1-\gamma}dx &\leq& \int\limits_{[|u|\geq T_\lambda]} \Phi(\lambda |u|)^{1-\gamma}dx+|\Omega|T_\lambda^{1-\gamma}\nonumber\\
    &\leq& {|\Omega|^\gamma}\left(\int\limits_{\Omega} \Phi(\lambda |u|)dx\right)^{1-\gamma}+|\Omega|T_\lambda^{1-\gamma} \nonumber\\
    &\leq& {|\Omega|^\gamma}\left( \ \ 
 \iint\limits_{\mathbb{R}^N \times \mathbb{R}^N}\Phi(\lambda C|D_s u|)\mathrm{d}\mu\right)^{1-\gamma}+|\Omega|T_\lambda^{1-\gamma} \nonumber.
\end{eqnarray}
Making $\lambda=\frac{1}{C}$, we conclude that
\begin{eqnarray}
    \int\limits_\Omega |u|^{1-\gamma}dx\leq \frac{|\Omega|^\gamma}{(1-s)^{1-\gamma}}I_1(u)^{1-\gamma}+|\Omega|T_\lambda^{1-\gamma}\nonumber.
\end{eqnarray}
This implies that 
\begin{eqnarray}\label{des-coer}
    \displaystyle I(u) &\geq& I_1(u)
-\frac{|\Omega|^\gamma}{(1-\gamma)(1-s)^{1-\gamma}}I_1(u)^{1-\gamma}-\frac{|\Omega|T_\lambda^{1-\gamma}}{1-\gamma},
\end{eqnarray}
The inequality \eqref{des-coer} and Proposition \ref{prop2.3} lead to the coercivity of $I$, because $ 1 > 1-\gamma$.
\end{proof}

%
%

After this, we have well-defined the infimum of $I$ over $ W^s_0  L^\Phi(\Omega)$.

\begin{lem}\label{negative-sing}
Assume the assumptions of Theorem \ref{pricipal-1}. Then:
\begin{enumerate}
    \item[i)] the $\inf_{ W^s_0  L^\Phi(\Omega)}I$ is negative,
    \item[ii)] there exists a non-null global minimizer $0\leq u \in W^s_0 L^\Phi(\Omega)$ of $I$. In particular, $u \in D_\Phi$.
\end{enumerate}
\end{lem}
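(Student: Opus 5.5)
For item i), fix a nonnegative, nontrivial $\varphi\in C_0^\infty(\Omega)$. By Proposition \ref{teo2.5} we have $\varphi\in W^s_0L^\Phi(\Omega)$, and for $t\in(0,1]$ we get $I_1(t\varphi)<\infty$.

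The key is to compare the two terms of $I(t\varphi)$ as $t\to0^+$. By $(\phi_1)$ the map $\tau\mapsto\Phi(\tau)/\tau$ is nondecreasing. Hence $\Phi(t\tau)\le t\,\Phi(\tau)$ for $0<t\le1$, which gives $I_1(t\varphi)\le t\,I_1(\varphi)$. Since $\Phi$ is an $N$-function we even have $\Phi(t\tau)/t\to0$ for each fixed $\tau$, though linear decay is already enough. On the other hand,
$$I_2(t\varphi)=-\frac{t^{1-\gamma}}{1-\gamma}\int_\Omega\varphi^{1-\gamma}\,dx .$$
Combining,
$$I(t\varphi)\le t\,I_1(\varphi)-\frac{t^{1-\gamma}}{1-\gamma}\int_\Omega\varphi^{1-\gamma}dx<0$$
for $t$ small, because $1-\gamma<1$. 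Therefore $\inf I<0$. The infimum is finite by Lemma \ref{coercphi3}.

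For item ii), I will use the direct method with the weak$^*$ topology. Let $(u_n)$ be a minimizing sequence. Since $I(|u|)\le I(u)$ (because $|D_s|u||\le|D_su|$, as in Lemma \ref{u+}, and $I_2$ depends only on $|u|$), we may assume $u_n\ge0$. Coercivity (Lemma \ref{coercphi3}) together with Proposition \ref{prop2.3} bounds $I_1(u_n)$ and hence $[u_n]_{s,\Phi}$. Lemma \ref{Estrela} then gives a subsequence with $u_n\stackrel{*}{\rightharpoonup}u$ in $W^s_0L^\Phi(\Omega)$.

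Lemma \ref{I1-coer} gives $I_1(u)\le\liminf I_1(u_n)$. For $I_2$, I will use the compact embedding of Proposition \ref{211} to get $u_n\to u$ in $L^\Phi(\Omega)$, and then Proposition \ref{I2continuous} gives $I_2(u_n)\to I_2(u)$. To apply this, I first need that weak$^*$ convergence plus boundedness yields strong $L^\Phi$ convergence of a subsequence. I would justify this through the compact embedding applied to the bounded sequence, identifying the limit with $u$ via $L^1$ convergence and duality against $E^{\widetilde\Phi}$.

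It follows that $I(u)\le\liminf I(u_n)=\inf I$, so $u$ is a global minimizer. Moreover $u\ge0$ a.e., since the cone of nonnegative functions is closed under a.e. or $L^1$ limits. Finally, $I(u)=\inf I<0=I(0)$ forces $u\not\equiv0$. Also $I_1(u)=I(u)-I_2(u)<\infty$, so $u\in D_\Phi$.

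The main obstacle is the passage from weak$^*$ convergence to convergence of $I_2(u_n)$ in the nonreflexive setting. This depends on Proposition \ref{211}, whose growth hypothesis \eqref{2.12} is implicitly assumed. If it were unavailable, I would instead use a.e. convergence along a subsequence together with uniform integrability of $u_n^{1-\gamma}$. That uniform integrability follows from the bound $\int\Phi(\lambda|u_n|)\le C$ obtained via Proposition \ref{Poincare} and the superlinearity of $\Phi$ (de la Vallée-Poussin). Vitali's theorem then gives $I_2(u_n)\to I_2(u)$.
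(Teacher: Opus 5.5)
Your proposal is correct and follows the paper's route. For i) you use the scaling $t\varphi$ with $t\to0^+$ and $1-\gamma<1$; for ii) you use the direct method via coercivity, Lemma \ref{Estrela}, weak$^*$ lower semicontinuity of $I_1$, the inequality $I(|u|)\le I(u)$ for nonnegativity, and $I(u)<0$ to get $u\not\equiv0$ and $I_1(u)<\infty$. You are also more careful than the paper on two points: taking $\varphi\in C_0^\infty(\Omega)$ so that $I_1(\varphi)<\infty$ (the paper takes an arbitrary $u\in W^s_0L^\Phi(\Omega)$, for which $I_1(u)$ may be $+\infty$ when $\Phi$ is not in $\Delta_2$), and explicitly justifying $I_2(u_n)\to I_2(u)$ via the compact embedding of Proposition \ref{211}, which the paper uses only tacitly.
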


\begin{proof} 
Let {$0 \neq u \in  W^s_0 L^\Phi (\Omega)$}. By using the fact that $\Phi$ is convex and $\gamma, t \in (0,1)$, we obtain
\begin{eqnarray*}
\begin{array}{lcl}
    \displaystyle \lim_{t \rightarrow 0}I(tu)     & = & \displaystyle \lim_{t \rightarrow 0} \left[ \ (1-s)\iint\limits_{\mathbb{R}^N \times \mathbb{R}^N}\Phi\left(\frac{tu(x)-tu(y)}{|x-y|^s} \right) \mathrm{d}\mu - \frac{1}{1-\gamma} \int\limits_\Omega|tu|^{1-\gamma} \mathrm{d}x \right] \\
    
     & \leq  & \displaystyle \lim_{t \rightarrow 0} \left[t(1-s) \iint\limits_{\mathbb{R}^N \times \mathbb{R}^N}\Phi\left(\frac{u(x)-u(y)}{|x-y|^s} \right) \mathrm{d}\mu - \frac{t^{1-\gamma}}{1-\gamma} \int\limits_\Omega |u|^{1-\gamma}\mathrm{d}x \right] <0,
\end{array}
\end{eqnarray*}
so that $I(tu)<0$ for $t>0$ small enough. This proves the item i).

Due to Lemmas \ref{coercphi3} and the item $i)$ just proved, there exists a minimizer sequence  $(u_n) \subset W^s_0 L^\Phi(\Omega)$ to the functional $I$, which implies by the coercivity of $I$, that 
$(u_n)$ is bounded in $W^s_0 L^\Phi(\Omega)$. Therefore, by Lemma \ref{Estrela}, for some subsequence denoted by itself, we obtain
$$
u_n \stackrel{*}{\rightharpoonup} u \quad \mbox{in} \quad  W^s_0 L^\Phi(\Omega),$$ 
that leads to 
$$
\inf_{ W^s_0  L^\Phi(\Omega)}I=\liminf_{n \to +\infty}I(u_n) \geq I(u)
$$
due to the fact that $I_1$ is weak$^*$ lower semicontinuous, as proved in \cite{Salort}. That is, $u$ is a non-null global minimizer that can be considered non-negative due to the parity of the energy functional $I$. 
Proposition \ref{211} together with \eqref{L1} imply $$I_1(u)<I_2(u) \leq \tilde{C}\Vert u \Vert_{\Phi}^{1-\gamma} \leq C[u ]_{s,\Phi}^{1-\gamma}$$
showing that $u \in D_{\Phi}$. This ends the proof.
\end{proof}

Below, we inspired on ideas from \cite[Lemma 3.4]{AlvesCarvalho} to prove, in particular, that the functional $I$ is finite at $u \in D_{\Phi}$ given above.

\begin{lem}\label{Domin}
Assume the assumptions of Theorem \ref{pricipal-1}. Let
$u\in D_\Phi$ be that given by Lemma  \ref{negative-sing}. Then
\begin{eqnarray}\label{des1-u1}
    (1-s)\iint\limits_{\mathbb{R}^N \times \mathbb{R}^N}\phi(|D_s u|)|D_s u|^2\mathrm{d}\mu \leq \int\limits_\Omega u^{1-\gamma}\mathrm{d}x.
\end{eqnarray}
  In particular, $u \in D_{\Tilde{\Phi}}$, that is, $\phi(|D_s u|)|D_s u| \in L^{\tilde{\Phi}}(\Omega).$
\end{lem}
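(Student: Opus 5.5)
We test the minimality of $u$ only along the ray $\{tu: t>0\}$. This ray stays inside $D_\Phi$ for $0<t\le 1$, by convexity of $\Phi$ and $\Phi(0)=0$. For $t\in(0,1)$ set
$$
g(t):=I(tu)=(1-s)\iint\limits_{\mathbb{R}^N \times \mathbb{R}^N}\Phi(t|D_s u|)\mathrm{d}\mu-\frac{t^{1-\gamma}}{1-\gamma}\int\limits_\Omega u^{1-\gamma}\mathrm{d}x .
$$
Since $u$ is a global minimizer (Lemma \ref{negative-sing}), $g(t)\ge g(1)$. Hence for $t\in(0,1)$,
$$
(1-s)\iint\limits_{\mathbb{R}^N \times \mathbb{R}^N}\frac{\Phi(|D_s u|)-\Phi(t|D_s u|)}{1-t}\,\mathrm{d}\mu\le \frac{1}{1-\gamma}\,\frac{1-t^{1-\gamma}}{1-t}\int\limits_\Omega u^{1-\gamma}\mathrm{d}x .
$$
All quantities here are finite because $u\in D_\Phi$.

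We then let $t\uparrow 1$. The right-hand side converges to $\int_\Omega u^{1-\gamma}\,\mathrm{d}x$. For the left-hand side, the mean value theorem and monotonicity of $\tau\mapsto \phi(\tau)\tau$ (hypothesis $(\phi_1)$) give
$$
\frac{\Phi(a)-\Phi(ta)}{1-t}=\phi(\xi)\xi\, a\ge \phi(ta)ta\cdot a\ge 0 \quad\text{for some } \xi\in(ta,a).
$$
Moreover the integrand converges pointwise to $\phi(|D_su|)|D_su|^2$. Fatou's lemma then yields \eqref{des1-u1}. This step is where care is needed. We must not use dominated convergence, since we have no integrable majorant without $\Delta_2$. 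Fatou only needs nonnegativity, which the difference quotient has.

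For the membership $u\in D_{\tilde\Phi}$ we use Young's equality: $\tilde\Phi(\phi(a)a)+\Phi(a)=\phi(a)a^2$. This gives
$$
(1-s)\iint\limits_{\mathbb{R}^N \times \mathbb{R}^N}\tilde\Phi(\phi(|D_su|)|D_su|)\,\mathrm{d}\mu\le (1-s)\iint\limits_{\mathbb{R}^N \times \mathbb{R}^N}\phi(|D_s u|)|D_s u|^2\mathrm{d}\mu\le\int\limits_\Omega u^{1-\gamma}\mathrm{d}x .
$$
The last integral is finite by \eqref{L1}, so $u\in D_{\tilde\Phi}$.

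The main obstacle is justifying the passage to the limit in the $I_1$ part without $\Delta_2$. Convexity settles it: the difference quotients are nonnegative, and Fatou applies directly, so no growth condition on $\Phi$ enters.
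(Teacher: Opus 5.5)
Your proof is correct and follows essentially the same route as the paper. The paper tests minimality with $v=(1-\frac1n)u$, uses the mean value theorem plus a monotonicity lower bound to get nonnegative integrands, passes to the limit with Fatou's lemma, and then obtains $u\in D_{\tilde\Phi}$ from Young's equality together with \eqref{L1}. Your continuous parameter $t\uparrow 1$ along $tu$, and your use of $\Phi\ge 0$ (instead of $u\in D_\Phi$) in the final Young step, are only cosmetic differences.
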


\begin{proof} Since $u$ is a global minimizer of $I$, that is,
 $$\displaystyle  n[{I_1(u)-I_1(v)}] \leq n[{I_2(v)-I_2(u)}]~\mbox{for all }v  \in W^s_0 L^\Phi(\Omega),~n\in \mathbb{N},$$
whence follows, together with $D_\Phi$ be convex, that
\begin{equation}\label{desi1}
\displaystyle n(1-s) \left( \ \iint\limits_{\mathbb{R}^N \times \mathbb{R}^N} \Phi(|D_s u|) \mathrm{d}\mu - \iint\limits_{\mathbb{R}^N \times \mathbb{R}^N}\Phi\left(\left| \left(1-\frac{1}{n} \right) D_s u \right| \right) \mathrm{d}\mu \right) \leq \frac{-1}{1-\gamma} \left[\frac{\left( 1-\frac{1}{n}\right)^{1-\gamma}-1}{\frac{1}{n}}\right]\int\limits_\Omega |u|^{1-\gamma}\mathrm{d}x
\end{equation}
by taking then $\displaystyle v = \left(1-{1}/{n} \right)u \in D_\Phi$ for any $n \in \mathbb{N}$. 

By using that  $\Phi \in C^1$, there exists a $\beta_n \in [0,1]$ such that
\begin{eqnarray*}
\begin{array}{lcl}
\displaystyle n (1-s)\left[\Phi(|D_s u|)-\Phi \left(\left| \left(1-\frac{1}{n} \right)D_s u\right| \right)\right] &  =  & 

\displaystyle (1-s)\Phi'\left(\left|\left(1-\frac{\beta_n}{n}\right)D_s u\right| \right)|D_s u| \\

&  =  & \displaystyle (1-s)\phi \left(\left|\left(1-\frac{\beta_n}{n}\right)D_s u\right| \right)\left|1-\frac{\beta_n}{n} \right||D_s u|^2,
\end{array}
\end{eqnarray*}
whence follows that 
\begin{equation}\label{desi2}
\displaystyle (1-s)\phi \left(\left(1-\frac{\beta_n}{n} \right)|D_s u| \right)\left|1-\frac{\beta_n}{n} \right||D_s u|^2 \geq (1-s)\phi \left(\left(1-\frac{\beta_n}{n} \right)|D_s u| \right)\left|\left(1-\frac{\beta_n}{n}\right)D_s u \ \right|^2
\end{equation}
due to the fact that  $\displaystyle 1-{\beta_n}/{n} \geq \left( 1-\beta_n/{n} \right)^2$. 

So, it is a consequence of \eqref{desi1} and \eqref{desi2} that
$$(1-s)\iint\limits_{\mathbb{R}^N \times \mathbb{R}^N} \phi\left(\left(1-\frac{\beta_n}{n} \right)|D_s u| \right)\left|\left(1-\frac{\beta_n}{n} \right)D_s u \right|^2 \mathrm{d}\mu \leq \frac{-1}{1-\gamma} \left[\frac{\left( 1-\frac{1}{n}\right)^{1-\gamma}-1}{\frac{1}{n}}\right]\int\limits_\Omega |u|^{1-\gamma}\mathrm{d}x,$$
which implies 
\begin{eqnarray}\label{dom}
    (1-s)\iint\limits_{\mathbb{R}^N \times \mathbb{R}^N}\phi(|D_s u|)|D_s u|^2\mathrm{d}\mu \leq 
    \int\limits_\Omega |u|^{1-\gamma}\mathrm{d}x,
\end{eqnarray}
after using Fatou's Lemma.

To complete the proof, just remains to prove that $u \in D_{\Tilde{\Phi}}$. To do this, we obtain from Young's equality that
$$\phi(|D_s u|)D_s u D_s u =\Phi(|D_s u|) + \tilde{\Phi}(\phi(|D_s u|)|D_s u|)$$
so that
$$\displaystyle (1-s)\iint\limits_{\mathbb{R}^N \times \mathbb{R}^N}\phi(|D_s u|)|D_s u|^2 \mathrm{d}\mu = I_1(u) + (1-s)\iint\limits_{\mathbb{R}^N \times\mathbb{R}^N}\tilde{\Phi}(\phi(|D_s u|)|D_s u|)\mathrm{d}\mu,$$
that implies
$$\displaystyle (1-s)\iint\limits_{\mathbb{R}^N \times \mathbb{R}^N} \tilde{\Phi}(\phi(|D_s u|)|D_s u|)\mathrm{d}\mu < +\infty,$$
after using $u \in D_\Phi$, \eqref{dom} and \eqref{L1}. This shows that $u \in D_{\Tilde{\Phi}}$, and ends the proof.
\end{proof}


The next result guarantees that the global minimizer $u\in D_\Phi \cap D_{\Tilde{\Phi}}$ of $I$ is almost everywhere (a.e.) positive and satisfies the inequality \eqref{14c} below that is very important to show that $u$ is a weak solution to Problem \eqref{Pa}. 
 
 To prove the below Lemma, we carefully build a test function $v_t$ that goes to $u$ as $t\to 0$ inside the effective domain of the non-differentiable functional of $I$. This kind of idea has been used in different contexts for functionals and workspaces with less restrictions, here we do note have the guarantee that $\Phi \in \Delta_2$. See for instance \cite{EGS,Mustonen,Yijing}.
 
\begin{lem}\label{solpos}
Assume the assumptions of Theorem \ref{pricipal-1}. Then $u(x)>0$ a.e. in $\Omega$,  $u^{-\gamma} v\in L^1(\Omega),~\forall v\in W_0^sL^\Phi(\Omega)$ and
\begin{eqnarray}\label{inequality-fund}
\begin{array}{lcl}\label{14c}
    \displaystyle (1-s)\iint\limits_{\mathbb{R}^N \times \mathbb{R}^N}\phi(|D_s u|)D_s u D_s\left(-2u+(v-u)^+ \right) \mathrm{d}\mu & \geq & \displaystyle \int_\Omega u^{-\gamma}\left(-2u + (v-u)^+ \right)\mathrm{d}x, 
\end{array}    
\end{eqnarray}
for all $0\leq v \in W_0^sL^\Phi(\Omega)$.
\end{lem}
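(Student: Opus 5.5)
The plan is to compare $u$ with the perturbed functions
$$v_t:=u+t\big(-2u+(v-u)^+\big)=(1-2t)u+t(v-u)^+,\qquad 0<t<\tfrac12,$$
and use the minimality of $u$ (Lemma \ref{negative-sing}) along this path. Throughout, $0\le v\in W_0^sL^\Phi(\Omega)$ is fixed. First I reduce to test functions in the effective domain. Since $D_\Phi$ need not be a linear space, I first treat $v\in D_\Phi$. Then $(v-u)^+\in D_\Phi$: indeed $|D_s(v-u)^+|\le|D_s(v-u)|$ (as in Lemma \ref{u+}), and $\frac12(v-u)\in D_\Phi$ by convexity. Writing
$$v_t=(1-2t)u+2t\cdot\tfrac12(v-u)^+,$$
$v_t$ is a convex combination of elements of $D_\Phi$, at least after a harmless rescaling of the test function by $\frac12$; all terms in the final inequality are linear in the direction, so this costs nothing. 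Hence $I(v_t)<\infty$. Moreover $v_t\ge(1-2t)u\ge0$.

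Next comes the difference-quotient argument. Minimality gives
$$\frac{I_1(v_t)-I_1(u)}{t}\ \ge\ \frac{1}{1-\gamma}\int_\Omega\frac{v_t^{1-\gamma}-u^{1-\gamma}}{t}\,dx .$$
\begin{itemize}
\item[(a)] \emph{Left side.} By convexity of $\Phi$, the quotient $\frac{\Phi(|D_sv_t|)-\Phi(|D_su|)}{t}$ is monotone in $t$. It is bounded above by its value at the fixed $t=\frac12$, which is integrable, and it converges pointwise to $\phi(|D_su|)D_su\,D_s w$, where $w=-2u+(v-u)^+$. Monotone convergence then passes to the limit, with the limit integral finite thanks to $u\in D_{\tilde\Phi}$ (Lemma \ref{Domin}) and Young's inequality.
\item[(b)] \emph{Right side.} Split $\Omega$ into $\{v>u\}$ and $\{v\le u\}$. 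On $\{v\le u\}$ we have $v_t=(1-2t)u$, and the quotient equals $u^{1-\gamma}\frac{(1-2t)^{1-\gamma}-1}{t}\to-2(1-\gamma)u^{1-\gamma}$, which is dominated. On $\{v>u\}$ the integrand is $\ge0$ for small $t$ once $(v-u)>2u$. In general I will use concavity of $r\mapsto r^{1-\gamma}$, which gives the lower bound $(1-\gamma)v_t^{-\gamma}w$ for the quotient, and apply Fatou's lemma on the set where $w\ge0$, with dominated convergence on the complement.
\end{itemize}
This yields \eqref{14c}, with the right side understood as $\int u^{-\gamma}w$, which a priori is in $(-\infty,+\infty]$.

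Positivity and integrability then follow. Taking $v=u+\varphi$ with $0\le\varphi\in C_0^\infty(\Omega)$, or more simply any $0\le\varphi\in D_\Phi$, the left side of \eqref{14c} is finite. Hence $\int u^{-\gamma}(v-u)^+<\infty$. In particular $u^{-\gamma}\varphi\in L^1$ whenever $\varphi>0$ a.e. is admissible, which forces $u>0$ a.e. For $u>0$ a.e. I would also use the first variation in the direction $\varphi\ge0$: if $u=0$ on a set $A$ of positive measure, then the singular term $\frac{1}{1-\gamma}\int(t\varphi)^{1-\gamma}$ on $A$ grows like $t^{1-\gamma}\gg t$. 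This contradicts minimality, since the $I_1$ increment is $O(t)$ by convexity.

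Extending $u^{-\gamma}v\in L^1$ to all $v\in W_0^sL^\Phi(\Omega)$ is done via $|v|$ and the weak$^*$ density of $C_0^\infty(\Omega)$ together with Fatou. Extending \eqref{14c} from $v\in D_\Phi$ to general $0\le v$ is done by replacing $v$ with $\varepsilon v\in D_\Phi$ for small $\varepsilon$, or by truncations $\min\{v,k\}$.

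The main obstacle is exactly this last extension, and more generally keeping every perturbation inside $D_\Phi$ without the $\Delta_2$-condition. Scaling and truncation do not obviously preserve the structure $(v-u)^+$, so I would first try to prove \eqref{14c} for $v$ with $I_1(\lambda v)<\infty$ for some $\lambda>1$. I would then pass to the limit using weak$^*$ convergence and the fact that $\phi(|D_su|)D_su\in L^{\tilde\Phi}$ defines a functional continuous along modular convergence.
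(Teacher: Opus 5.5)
Your argument for test functions that keep $v_t$ inside $D_\Phi$ is correct, and in one respect more robust than the paper's. Since $t\mapsto\Phi(|D_su+tD_sw|)$ is convex, the difference quotients decrease as $t\downarrow0$ and are dominated by their value at $t=\frac12$. This gives \eqref{14c} for every $0\le v$ with $\frac12(v-u)^+\in D_\Phi$. The paper's majorant $h_1$ contains $\phi(|D_sv|)|D_sv|^2$ and so needs $v\in C_0^\infty(\Omega)$. Your concavity bound, with Fatou on $\{w\ge0\}$ and dominated convergence on $\{w<0\}$ (where $|w|\le 2u$), likewise replaces the paper's three-region split. Three small slips:
\begin{enumerate}
\item[(a)] You proved $\frac12(v-u)^+\in D_\Phi$, not $(v-u)^+\in D_\Phi$; the former is all you use.
\item[(b)] The function $u+\varphi$ need not lie in $D_\Phi$. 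Only $\frac12(v-u)^+=\frac12\varphi\in D_\Phi$ matters, or you can take $v=\varphi$ as the paper does.
\item[(c)] The $t^{1-\gamma}$ positivity argument should be run along $(1-t)u+t\varphi$, not $u+t\varphi$.
\end{enumerate}
Your extension of $u^{-\gamma}v\in L^1(\Omega)$ also works. Applying \eqref{14c} to the approximants, together with Young's inequality with a large parameter and the boundedness of weak$^*$ convergent sequences, bounds $\int_\Omega u^{-\gamma}(v_n-u)^+\,dx$ uniformly. Fatou then needs only a.e.\ convergence.

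\textbf{The gap} is the extension of \eqref{14c} to all $0\le v\in W_0^sL^\Phi(\Omega)$. The lemma asserts this, and Corollary \ref{igual} uses it with $v=4u$. In that case $v_t=(1+t)u$, and without $\Delta_2$ you have no reason for $I_1((1+t)u)<\infty$ for any $t>0$, so minimality alone says nothing. An approximation argument is unavoidable, and the devices you list do not provide one:
\begin{enumerate}
\item[(i)] Replacing $v$ by $\varepsilon v$ turns $(v-u)^+$ into $(\varepsilon v-u)^+$. The inequality is not homogeneous in $v$, so nothing about $v$ is recovered.
\item[(ii)] Truncation only gives $|D_s\min\{v,k\}|\le|D_sv|$. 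This does not in general put $\frac12(\min\{v,k\}-u)^+$ into $D_\Phi$.
\item[(iii)] Any $v$ with $I_1(\lambda v)<\infty$ for some $\lambda>1$ already lies in $D_\Phi$, so that class leaves the difficulty untouched.
\end{enumerate}

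What you need is a sequence of admissible $v_n$, say $0\le v_n\in C_0^\infty(\Omega)$, with $v_n\to v$ a.e.\ (for Fatou on the right) and
$$\iint\phi(|D_su|)D_su\,D_s(v_n-u)^+\,d\mu\;\to\;\iint\phi(|D_su|)D_su\,D_s(v-u)^+\,d\mu.$$
Weak$^*$ convergence does not give this limit, for two reasons. It only tests against $E^{\tilde\Phi}(d\mu)$, while Lemma \ref{Domin} only places $\phi(|D_su|)D_su$ in the Orlicz class of $\tilde\Phi$. Also, $(\cdot)^+$ is not weak$^*$ continuous without a compactness argument. Modular convergence $D_sv_n\to D_sv$ in $L^\Phi(d\mu)$ would suffice: combined with a.e.\ convergence, Young's inequality and Vitali's theorem give convergence against every element of $L^{\tilde\Phi}(d\mu)$. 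But that requires a Gossez-type modular density theorem for $W_0^sL^\Phi(\Omega)$, which you neither prove nor cite.

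For comparison, the paper handles this step by weak$^*$-approximating $v$ with nonnegative $C_0^\infty$ functions, passing to the limit on the left of \eqref{vgeq0}, and using Fatou on the right. If you follow that route, the left-hand limit is exactly what must be justified. It is straightforward when, for example, $\tilde\Phi\in\Delta_2$ (so that $L^{\tilde\Phi}=E^{\tilde\Phi}$), combined with the compact embedding.
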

\begin{proof}
Let $v \in C_0^\infty(\Omega), v\geq 0 $. For each ${0<t<{1}/{3}}$, the function $$\displaystyle v_t = (1-2t)u+t(v-u)^+$$
satisfies
$$0<v_t\leq (1-2t)u+t|v-u|\leq (1-2t)u+tv+tu=(1-t)u+tv\in \mathcal{L}_\Phi (\Omega),$$
due to the convexity of $\Phi$. Besides this, it follows from Lemma \ref{u+} that $|D_s(u-v)^+|\leq  |D_su|+|D_sv|.$ Then, using this fact and the monotonicity of $\phi(|\cdot|)|\cdot|$ and  we have that
$$\left|D_sv_t\right|\leq\left|D_s\left((1-2t)u\right)\right|+\left|D_s\left(t(v-u)^+\right)\right| \leq (1-2t)\left|D_su\right|+t\left|D_sv\right|+t\left|D_su\right| \leq (1-t)\left|D_su\right|+t\left|D_sv\right|,$$  
whence follows
$$I_1(v_t) \leq (1-t)\iint\limits_{\mathbb{R}^N \times \mathbb{R}^N}\Phi(|D_s u|)\mathrm{d}\mu + t\iint\limits_{\mathbb{R}^N \times \mathbb{R}^N}\Phi(|D_s v|)\mathrm{d}\mu < +\infty,$$
due to the convexity of $\Phi$, that is, $v_t \in D_\Phi.$

As a consequence of $v_t \in D_\Phi$ and $u \in D_\Phi$ be a global minimizer of $I$, we have
\begin{eqnarray}\label{0.1}
\begin{array}{lcl}
    \displaystyle \frac{1}{t}  \int\limits_\Omega \frac{v_{t}^{1-\gamma}-u^{1-\gamma}}{1-\gamma}\mathrm{d}x \leq \displaystyle (1-s)  \iint\limits_{\mathbb{R}^N \times \mathbb{R}^N}\frac{\Phi(|D_s v_t|)-\Phi(|D_s u|)}{t}\mathrm{d}\mu
\end{array}
\end{eqnarray}
for any $t>0$. 

Below, we will show that there  exist and calculate the above limits via  Dominated Convergence Theorem.  We begin proving that
\begin{equation}
    \label{17a}
    \displaystyle \left|\frac{\Phi(|D_s v_t|)-\Phi(|D_s u|)}{t} \right| \leq h_1,
\end{equation}
for some $h_1 \in L^1(\mathbb{R}^N \times \mathbb{R}^N, \mathrm{d}\mu)$. In fact, it follows from Lemma \ref{Phi}-(i) that
\begin{eqnarray}\label{0.3}
\begin{array}{ll}
  \displaystyle \left|\frac{\Phi(|D_s v_t|)-\Phi(|D_s u|)}{t} \right|   & \leq [\phi(|D_s v_t|)|D_s v_t|+\phi(|D_s u|)|D_s u|]\frac{|D_s v_t - D_s u|}{t} \\
  \\
     & := A + B  \phi(|D_s u|)|D_s u|,
\end{array}
\end{eqnarray}
where 
$$A=\phi(|D_s v_t|)|D_s v_t|~\mbox{and}~ B= \frac{|D_s v_t - D_s u|}{t}.$$

Now, let us estimate $A$ and $B$. It follows from the monotonicity of $\phi(|\cdot|)|\cdot|$ and the fact $|D_s(u-v)^+|\leq  |D_su|+|D_sv|,$ that
$$\left|D_sv_t\right|\leq\left|D_s\left((1-2t)u\right)\right|+\left|D_s\left(t(v-u)^+\right)\right| \leq (1-2t)\left|D_su\right|+t\left|D_sv\right|+t\left|D_su\right| \leq (1-t)\left|D_su\right|+t\left|D_sv\right|,$$  
whence follows by $\varphi(|\cdot|)|\cdot|$ be increasing and Lemma \ref{Phi}-(ii), that
\begin{eqnarray}\label{0.4}
\begin{array}{lcl}
\displaystyle A &\leq &\ \displaystyle \phi\left(\left|D_su\right|\right)\left|D_su\right|+ \phi\left(\left|D_sv\right|\right)\left|D_sv\right|.
\end{array}
\end{eqnarray}

About $B$, it is immediate that
\begin{eqnarray}\label{0.5}
\begin{array}{lcccl}
\displaystyle B:=\frac{\left|D_sv_{t}-D_su_{1}\right|}{t}&=&\displaystyle \frac{\left|-2tD_su_{1}+tD_s(v-u)^+\right|}{t}&\leq& 3\left|D_su_{1}\right|+\left|D_sv\right|.
\end{array}
\end{eqnarray}

So, by using \eqref{0.4} and \eqref{0.5} in \eqref{0.3}, we obtain 
\begin{eqnarray*}\label{h_1}
\begin{array}{lcl}
    \displaystyle \left|\frac{\Phi(|D_s v_t|)-\Phi(|D_s u|)}{t} \right| & \leq & \displaystyle ( 2\phi(|D_su|)|D_su|+ \phi(|D_sv|)|D_sv|)(3|D_su_{1}|+|D_sv|) \\
    
     & =  & \displaystyle 6\phi(|D_s u|)|D_s u|^2 + 
     \phi(|D_s v|)(|D_s v|)^2 \\ 
     
     &+& \displaystyle 2\phi(|D_s u|)|D_s u||D_s v| + 3\phi(|D_s v|)|D_s v||D_s u| =:h_1  .
\end{array}
\end{eqnarray*}

As $u \in W^sL^\Phi(\Omega)$ and $v \in C_0^\infty (\Omega),$ the first two terms of $h_1$ belong to $L^1(\mathbb{R}^N \times \mathbb{R}^N, \mathrm{d}\mu)$, while the conclusion of the last two terms of $h_1$ to belong to $L^1(\mathbb{R}^N \times \mathbb{R}^N, \mathrm{d}\mu)$ is a consequence of Young's inequality and that $u \in D_{\tilde{\Phi}}$, that is, $h_1 \in L^1(\mathbb{R}^N \times \mathbb{R}^N,\mathrm{d}\mu)$, as claimed. 


After these, we able to apply Dominated Convergence Theorem in the last term of \eqref{0.1} to obtain
\begin{eqnarray}\label{0.6}
\begin{array}{lcl}
    \displaystyle (1-s)\lim_{t \rightarrow 0} \iint\limits_{\mathbb{R}^N \times \mathbb{R}^N}\frac{\Phi(|D_s v_t|)-\Phi(|D_s u|)}{t} \mathrm{d}\mu & = &  \displaystyle (1-s)\iint\limits_{\mathbb{R}^N \times \mathbb{R}^N}\lim_{t \rightarrow 0} \frac{\Phi(|D_s v_t|)-\Phi(|D_s u|)}{t} \mathrm{d}\mu \\
     & = & \displaystyle (1-s)\iint\limits_{\mathbb{R}^N \times \mathbb{R}^N} \lim_{t \rightarrow 0}\frac{g(t)-g(0)}{t} \mathrm{d}\mu,
\end{array}
\end{eqnarray}
where $g(t)=\Phi(|D_s v_t|), t \geq 0,$ for each $x,y \in \mathbb{R}^N$.  Since,  
$$ \lim_{t \rightarrow 0}\frac{g(t)-g(0)}{t} =\displaystyle \phi(|D_s u|)D_s u D_s \left(-2u+(v-u)^+ \right)~\mbox{for each}~x,y \in \mathbb{R}^N,$$
we obtain
\begin{eqnarray}\label{finite}
\begin{array}{lcl}
     \displaystyle (1-s)\lim_{t \rightarrow 0} \iint\limits_{\mathbb{R}^N \times \mathbb{R}^N}\frac{\Phi(|D_s v_t|)-\Phi(|D_s u|)}{t} \mathrm{d}\mu
    & =  & \displaystyle (1-s)\iint\limits_{\mathbb{R}^N \times \mathbb{R}^N}\phi(|D_s u|)D_s u D_s\left(-2u + (v-u)^+ \right) \mathrm{d}\mu.
\end{array}
\end{eqnarray}


Besides this, it follows from  $|u^+(x)-u^+(y)| \leq |u(x)-u(y)|$, $u \in D_{\Tilde{\Phi}}$, and Young's inequality, that
\begin{eqnarray}\label{3.17}
\begin{array}{lcl}
    \displaystyle (1-s)\iint\limits_{\mathbb{R}^N \times \mathbb{R}^N}\phi(|D_s u|)D_s u D_s\left(-2u + (v-u)^+ \right) \mathrm{d}\mu &  \leq  & \displaystyle(1-s) \left(3 \iint\limits_{\mathbb{R}^N \times \mathbb{R}^N}\phi(|D_s u|)|D_s u|^2 \mathrm{d}\mu \right.\\
    
    & + & \displaystyle \left.\iint\limits_{\mathbb{R}^N \times \mathbb{R}^N}\phi(|D_s u|)|D_s u||D_s v| \mathrm{d}\mu\right) <+\infty,
\end{array}
\end{eqnarray}
for all $v \in C_0^\infty(\Omega)$ with $ v \geq 0$, that is,
$$\displaystyle \iint\limits_{\mathbb{R}^N \times \mathbb{R}^N}\phi(|D_s u|)|D_s u|D_s v \mathrm{d}\mu \leq \iint\limits_{\mathbb{R}^N \times \mathbb{R}^N} \tilde{\Phi}(\phi(|D_s u|)|D_s u|)\mathrm{d}\mu + \iint\limits_{\mathbb{R}^N \times \mathbb{R}^N} \Phi(|D_s v|)\mathrm{d}\mu < +\infty.$$ 

Below, let us estimate the left side of \eqref{0.1}. We will do this, by splitting our integration domain as follows 
\begin{equation}
    \label{24a}
    \displaystyle \frac{1}{t} \int\limits_\Omega \frac{v_{t}^{1-\gamma}-u^{1-\gamma}}{1-\gamma}\mathrm{d}x = \frac{1}{t} \left[ \int\limits_{[v \leq u]} + \int\limits_{[u < v< 2u]} + \int\limits_{[v > 2u]} \right]\frac{v_{t}^{1-\gamma}-u^{1-\gamma}}{1-\gamma}\mathrm{d}x, 
\end{equation}
and evaluating each integral.

First for $[v \leq u]$, we have straightforward from the definition of $v_t$, that 
\begin{equation}\label{vmenoru1}
\displaystyle \lim_{t \rightarrow 0}\frac{1}{1-\gamma}\int\limits_{[v \leq u]}\frac{v_t^{1-\gamma}-u^{1-\gamma}}{t}\mathrm{d}t    =\lim_{t \rightarrow 0}\frac{1}{1-\gamma}\frac{(1-2t)^{1-\gamma}-1}{t}\int\limits_{[v \leq u]}u^{1-\gamma}\mathrm{d}x = -2  \int\limits_{[v \leq u]} u^{1-\gamma}\mathrm{d}x,
\end{equation}
while for $[u < v < 2u]$, we note that there exists  a $\bar{t}=\bar{t}(x) \in (0,1)$ such that 
$$\displaystyle \left|\frac{1}{1-\gamma} \frac{g_2(t)-g_2(0)}{t}\right| = \frac{|g'_2 (\bar{t})|}{1-\gamma}\leq Ku^{1-\gamma} \in L^1(\Omega),$$
for some $K>0$, where $g_2(t):=[(1-3t)u+tv]^{1-\gamma}$  for $t \geq 0$. So, it follows by the Dominated Convergence Theorem that
\begin{eqnarray}\label{u1menorvmenor2u1}
\begin{array}{lcl}
    \displaystyle \lim_{t \rightarrow 0}\frac{1}{1-\gamma}\int\limits_{[u < v < 2u]}\frac{v_t^{1-\gamma}-u^{1-\gamma}}{t}\mathrm{d}t   &  = & \displaystyle \int\limits_{[u < v < 2u]}\lim_{t \rightarrow 0}((1-3t)u + tv)^{-\gamma}(-3u + v) \mathrm{d}x \\
    
    & = & \displaystyle \int\limits_{[u < v < 2u]} (-3u^{1-\gamma}+ u^{-\gamma}v) \mathrm{d}x.
\end{array}
\end{eqnarray}

Finally, let us consider $[v > 2u]$. By noting that  
$$\displaystyle \frac{1}{1-\gamma} \frac{g_2(t)-g_2(0)}{t} = \frac{g'_2 (\bar{t})}{1-\gamma}= ((1-3t)u + tv)^{-\gamma}(-3u + v)>-u^{1-\gamma} \in L^1(\Omega),$$ 
one has  by Fatou's Lemma 
\begin{eqnarray}\label{vmaioru1}
\begin{array}{lcl}
    \displaystyle \liminf_{t \rightarrow 0}\frac{1}{1-\gamma} \int\limits_{[v > 2u]}\frac{v_t^{1-\gamma}-u^{1-\gamma}}{t}\mathrm{d}x &  \geq & \displaystyle \int\limits_{[v >2u]}\liminf_{t \rightarrow 0} ((1-3t)u +tv)^{-\gamma}(-3u+v)\mathrm{d}x \\
    
    & = & \displaystyle  \int\limits_{[v > 2u]} (-3u^{1-\gamma}+  u^{-\gamma}v) \mathrm{d}x.
\end{array}
\end{eqnarray}

So, by using \eqref{vmenoru1}-\eqref{vmaioru1} in \eqref{24a}, we get to 
\begin{eqnarray}\label{0.9}
\begin{array}{lcl}
    \displaystyle \limsup_{t \rightarrow 0}\frac{1}{t}\int\limits_\Omega \frac{v_t^{1-\gamma} - u^{1-\gamma}}{1-\gamma}\mathrm{d}x &\geq &  \displaystyle  \int\limits_{[v \leq u]} -2u^{1-\gamma} \mathrm{d}x + \left[ \int\limits_{[u < v< 2u]} +\int\limits_{[v > 2u]} \right] (-3u^{1-\gamma}+u^{-\gamma}v) \mathrm{d}x\\
    &=&\displaystyle \int\limits_\Omega u^{-\gamma}(-2u+(v-u)^+)\mathrm{d}x ,
    
\end{array}     
\end{eqnarray}
where the equality is just a re-written of the former terms.

So, it is a consequence of \eqref{0.1}, \eqref{finite},  \eqref{3.17} and \eqref{0.9} that
\begin{eqnarray}\label{vgeq0}
\begin{array}{lcl}
    \displaystyle+\infty > (1-s)\iint\limits_{\mathbb{R}^N \times \mathbb{R}^N}\phi(|D_s u|)D_s u D_s\left(-2u+(v-u)^+ \right) \mathrm{d}\mu & \geq & \displaystyle \int_\Omega u^{-\gamma}\left(-2u + (v-u)^+ \right)\mathrm{d}x
\end{array}    
\end{eqnarray}
for all $v\in C^\infty_0(\Omega)$ with $v\geq 0$.

Below, let us prove that $u(x) >0$ a.e. in $\Omega$. First, we note that follows from \eqref{L1} that
\begin{equation}\label{0.10}
\displaystyle 0 \leq \int\limits_{[v \leq u]}u^{-\gamma}v\mathrm{d}x \leq \int\limits_{[v \leq u]}u^{1-\gamma}\mathrm{d}x < +\infty.
\end{equation}

In addition, it is a consequence of \eqref{3.17}, \eqref{0.9} and \eqref{vgeq0},  that

\begin{equation}\label{0.12}
\displaystyle +\infty >  (1-s)\iint\limits_{\mathbb{R}^N \times \mathbb{R}^N}\phi(|D_s u|)D_s u D_s\left(-2u+(v-u)^+ \right) \mathrm{d}\mu+2 \int\limits_{[v \leq u]} u^{1-\gamma} \mathrm{d}x + 3\int\limits_{[v>u]}u^{1-\gamma}\mathrm{d}x \geq \int\limits_{[v>u]}u^{-\gamma}v \mathrm{d}x.    
\end{equation}

Therefore, by \eqref{0.10} and \eqref{0.12} we have 
\begin{equation}\label{singularfinite}
\int\limits_\Omega u^{-\gamma} v\mathrm{d}x = \left[ \  \int\limits_{[v \leq u]} + \int\limits_{[v>u]}  \right]u^{-\gamma}v\mathrm{d}x < +\infty, \ \forall v \in C_0^\infty (\Omega), v \geq 0.   
\end{equation}

That is, there exists $M_v >0$ such that 
\begin{equation}\label{0.11}
\displaystyle 0 \leq \int\limits_\Omega \frac{v}{u^\gamma}\mathrm{d}x < M_v.    
\end{equation}

Now, assume by contradiction that the set $\Omega_0 = \{x \in \Omega; u (x) \equiv 0 \}$ has no null Lebesgue's measure. So,
$$\displaystyle \int\limits_\Omega \frac{v}{u^\gamma}\mathrm{d}x = \left[\int\limits_{[u < \varepsilon ]} + \int\limits_{[u \geq \varepsilon ]} \right] \frac{v}{u^\gamma}\mathrm{d}x > \int\limits_{[u < \varepsilon ]}\frac{v}{u^\gamma}\mathrm{d}x > \frac{1}{\varepsilon^\gamma}\int\limits_{\Omega_0} v \mathrm{d}x \longrightarrow +\infty,$$ as $\varepsilon \rightarrow 0,$
what contradicts \eqref{0.11}. Hence, we have $u(x) >0$ a.e. in $\Omega.$
Moreover, it follows from \eqref{0.11} that
$$\displaystyle \left|\int\limits_\Omega u^{-\gamma}v \mathrm{d}x \right|= \left|\int\limits_\Omega u^{-\gamma}(v^+ - v^-) \mathrm{d}x \right| \leq \int\limits_\Omega u^{-\gamma}v^+ \mathrm{d}x + \int\limits_\Omega u^{-\gamma}v^- \mathrm{d}x < +\infty,$$
that is, $u^{-\gamma}v \in L^1(\Omega), \ \ \forall v \in C_0^\infty(\Omega)$. 

Finally, let us show \eqref{14c}. Given $v \in W_0^sL^\Phi(\Omega)$ such that $v \geq 0$, we know from Proposition \ref{impontant} that there exists $(v_n) \in C_0^\infty (\Omega), v_n \geq 0$ such that $v_n \stackrel{*}{\rightharpoonup} v.$ From this convergence and \eqref{vgeq0}, we obtain
\begin{eqnarray}\label{desig0}
    \displaystyle+\infty > \iint\limits_{\mathbb{R}^N \times \mathbb{R}^N}\phi(|D_s u|)D_s u D_s\left(-2u+(v-u)^+ \right) \mathrm{d}\mu & = & \lim_{n\to+\infty}\iint\limits_{\mathbb{R}^N \times \mathbb{R}^N}\phi(|D_s u|)D_s u D_s\left(-2u+(v_n-u)^+ \right) \mathrm{d}\mu\nonumber\\
    &\geq & \frac{1}{1-s}\liminf_{n\to+\infty}\int_\Omega u^{-\gamma}\left(-2u + (v_n-u)^+ \right)\mathrm{d}x\nonumber\\
    &\geq & \displaystyle \int\limits_\Omega u^{-\gamma}(-2u+(v-u)^+)\mathrm{d}x,
\end{eqnarray}
where the last inequality follows from Fatou's Lemma. That is, the equation \eqref{inequality-fund} holds for all $ v \in W_0^sL^\Phi(\Omega).$ This ends the proof.
\end{proof}

As a corollary of the above lemma we have the following result. Such corollary will be an important tool to prove the Proposition \ref{propoprincipal}.

\begin{cor}\label{igual}
Assume the assumptions of Theorem \ref{pricipal-1}. Then $u \in W_0^s L^\Phi (\Omega)$ satisfies the following
$$(1-s) \iint\limits_{\mathbb{R}^N \times \mathbb{R}^N}\phi(|D_s u|)|D_s u|^2\, \mathrm{d}\mu =\int\limits_{\Omega}u^{1-\gamma}\,dx.$$
\end{cor}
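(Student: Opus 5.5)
The identity $(1-s)\iint\phi(|D_su|)|D_su|^2\,\mathrm{d}\mu=\int_\Omega u^{1-\gamma}\,\mathrm{d}x$ will come from two opposite inequalities. The inequality ``$\leq$'' is already inequality \eqref{des1-u1} of Lemma \ref{Domin}, since $u\ge 0$ gives $|u|^{1-\gamma}=u^{1-\gamma}$. So everything reduces to the reverse inequality
$$(1-s)\iint\limits_{\mathbb{R}^N \times \mathbb{R}^N}\phi(|D_s u|)|D_s u|^2\,\mathrm{d}\mu \;\geq\; \int\limits_\Omega u^{1-\gamma}\,\mathrm{d}x .$$

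\textbf{Option 1: test \eqref{14c} with $v=0$.} This is admissible, since $0\le v\in W_0^sL^\Phi(\Omega)$. Then $(v-u)^+=(-u)^+=0$ because $u\ge 0$. Inequality \eqref{14c} becomes
$$-2(1-s)\iint\phi(|D_su|)|D_su|^2\,\mathrm{d}\mu\ \ge\ -2\int_\Omega u^{1-\gamma}\,\mathrm{d}x.$$
Dividing by $-2$ gives the opposite of what we want, namely ``$\leq$'' again. So $v=0$ is useless, and we need a test function that produces the reverse direction.

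\textbf{Option 2: test \eqref{14c} with $v=3u$.} Here $(v-u)^+=2u$, so the test direction is $-2u+2u=0$ and \eqref{14c} reads $0\ge 0$, which is trivial. More generally, $v=\lambda u$ with $\lambda>1$ gives the direction $(\lambda-3)u$, and \eqref{14c} yields
$$(\lambda-3)\Big[(1-s)\iint\phi(|D_su|)|D_su|^2\,\mathrm{d}\mu-\int_\Omega u^{1-\gamma}\,\mathrm{d}x\Big]\ \ge 0.$$
Choosing $\lambda=4$ gives exactly ``$\geq$''. Every step is legitimate: $4u\in W_0^sL^\Phi(\Omega)$ is nonnegative, $u^{-\gamma}u=u^{1-\gamma}\in L^1(\Omega)$ by \eqref{L1}, and $D_s(4u-u)^+=3D_su$ because $u\ge0$. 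Combined with \eqref{des1-u1}, this gives equality.

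\textbf{Option 3: one-sided derivative of the fibre map (backup).} As a robustness check, one can argue with $t\mapsto I(tu)$. This map has a minimum at $t=1$, and $u\in D_\Phi$. For $t$ slightly above $1$, however, $tu$ may leave $D_\Phi$ when $\Phi\notin\Delta_2$. That is precisely why the one-sided limit used in Lemma \ref{Domin} only gave ``$\leq$''.

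The main obstacle is therefore this direction issue: enlarging $u$ may exit the effective domain. The route through \eqref{14c} avoids it, because Lemma \ref{solpos} built the test function $v_t=(1-2t)u+t(v-u)^+$, which stays in $D_\Phi$. The one point I would double-check is that Lemma \ref{solpos} does cover $v\in W_0^sL^\Phi(\Omega)$ that are not smooth, such as $v=4u$. Its final step extends the inequality by weak$^*$ approximation and Fatou, so this should hold. If that extension were doubtful, I would instead insert $v=4u$ directly into the construction of $v_t=(1-2t)u+3tu=(1+t)u$. For $\Phi\notin\Delta_2$, though, $(1+t)u$ need not lie in $D_\Phi$, so this fallback is not available in general, and I would rely on the stated lemma.
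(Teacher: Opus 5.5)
Your proposal is correct and is essentially the paper's own proof: the paper also substitutes $v=4u$ into \eqref{inequality-fund} to get the reverse inequality, and combines it with \eqref{des1-u1} to conclude equality. As you note, both arguments rely on Lemma \ref{solpos} holding for every nonnegative $v\in W_0^sL^\Phi(\Omega)$, not only for smooth $v$.
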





\begin{proof}
First, we note that doing  $v=4u$ in \eqref{inequality-fund}, we obtain
$$(1-s)\iint\limits_{\mathbb{R}^N \times \mathbb{R}^N}\phi(|D_s u|)|D_s u|^2  \mathrm{d}\mu\geq \int\limits_{\Omega}u^{1-\gamma} \mathrm{d}x,$$
which implies the equality after using the \eqref{des1-u1}, that is, the corollary is proved.
\end{proof}

The next result guarantees the existence of a weak solution to the problem \eqref{Pa}.

\begin{prop} \label{propoprincipal}
Assume the assumptions of Theorem \ref{pricipal-1}. Then $u \in W_0^s L^\Phi (\Omega)$ is a weak solution for \eqref{Pa}, that is,
$$ (1-s)\iint\limits_{\mathbb{R}^N \times \mathbb{R}^N}\phi(|D_s u|)D_s u D_s v\, \mathrm{d}\mu =\int\limits_{\Omega}u^{-\gamma}v\,dx,\quad \forall v \in W^s_0 L^\Phi(\Omega).$$	
\end{prop}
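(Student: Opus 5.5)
The plan is the classical trick from singular problems, combining the variational inequality \eqref{inequality-fund} of Lemma \ref{solpos} with the identity of Corollary \ref{igual}. Fix $\varphi\in W_0^sL^\Phi(\Omega)$ and $\varepsilon>0$. Plug $v=u+\varepsilon\varphi$ into \eqref{inequality-fund}; more precisely, take the nonnegative function $v=(u+\varepsilon\varphi)^+ + u$, so that $(v-u)^+=(u+\varepsilon\varphi)^+$. This $v$ is admissible because $(u+\varepsilon\varphi)^+\in W_0^sL^\Phi(\Omega)$ by Lemma \ref{u+}. Writing $\Psi_\varepsilon:=(u+\varepsilon\varphi)^+$, \eqref{inequality-fund} gives
$$(1-s)\iint\phi(|D_su|)D_su\,D_s(-2u+\Psi_\varepsilon)\,d\mu\ \ge\ \int_\Omega u^{-\gamma}(-2u+\Psi_\varepsilon)\,dx.$$
Adding Corollary \ref{igual} (multiplied by $2$) to cancel the $-2u$ terms, we obtain
$$(1-s)\iint\phi(|D_su|)D_su\,D_s\Psi_\varepsilon\,d\mu\ \ge\ \int_\Omega u^{-\gamma}\Psi_\varepsilon\,dx.$$
All integrals here are finite, since $u\in D_{\tilde\Phi}$ (Lemma \ref{Domin}) and Young's inequality control the left side, and Lemma \ref{solpos} gives $u^{-\gamma}w\in L^1$.

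Next, write $\Psi_\varepsilon=(u+\varepsilon\varphi)+(u+\varepsilon\varphi)^-$. Using Corollary \ref{igual} again to cancel the $u$ contribution, we get
$$\varepsilon\Big[(1-s)\iint\phi(|D_su|)D_su\,D_s\varphi\,d\mu-\int_\Omega u^{-\gamma}\varphi\,dx\Big]\ \ge\ \int_\Omega u^{-\gamma}(u+\varepsilon\varphi)^-dx-(1-s)\iint\phi(|D_su|)D_su\,D_s(u+\varepsilon\varphi)^-\,d\mu.$$
The first term on the right is nonnegative and can be dropped. We divide by $\varepsilon$ and show that
$$\frac{1}{\varepsilon}(1-s)\iint\phi(|D_su|)D_su\,D_s(u+\varepsilon\varphi)^-\,d\mu\ \longrightarrow\ 0\quad(\varepsilon\to0^+).$$
If this holds, we get $\langle(-\Delta_\Phi)^su,\varphi\rangle\ge\int u^{-\gamma}\varphi$, and replacing $\varphi$ by $-\varphi$ gives equality.

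The main obstacle is this last limit in the nonlocal setting. Set $w_\varepsilon=(u+\varepsilon\varphi)^-$, supported on $\Omega_\varepsilon=\{u+\varepsilon\varphi<0\}$, whose measure tends to $0$ since $u>0$ a.e. In the local case one uses $\nabla w_\varepsilon=-(\nabla u+\varepsilon\nabla\varphi)\chi_{\Omega_\varepsilon}$, so the $u$-part has a good sign and the rest is $\varepsilon\int_{\Omega_\varepsilon}\phi|\nabla u||\nabla\varphi|=o(\varepsilon)$. For the fractional version I would use the pointwise inequality $D_su\,D_s w_\varepsilon\le \varepsilon|D_s\varphi|\,|D_su|\,\chi_{\{(x,y):x\in\Omega_\varepsilon\text{ or }y\in\Omega_\varepsilon\}}$. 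It follows from $-w_\varepsilon=\min(u+\varepsilon\varphi,0)$ being a monotone (1-Lipschitz, nondecreasing) function of $u+\varepsilon\varphi$: the sign of $(a-b)\cdot(g(a+\varepsilon c)-g(b+\varepsilon d))$ with $g(t)=-t^-$ is controlled up to an $\varepsilon|c-d|$ error. This gives
$$\frac{1}{\varepsilon}\iint\phi(|D_su|)D_su\,D_sw_\varepsilon\,d\mu\le\iint_{E_\varepsilon}\phi(|D_su|)|D_su||D_s\varphi|\,d\mu,$$
where $E_\varepsilon=(\Omega_\varepsilon\times\mathbb{R}^N)\cup(\mathbb{R}^N\times\Omega_\varepsilon)$. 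Because $\phi(|D_su|)|D_su||D_s\varphi|\in L^1(d\mu)$ by Young's inequality and $u\in D_{\tilde\Phi}$, and $\chi_{E_\varepsilon}\to0$ a.e. as $|\Omega_\varepsilon|\to0$, dominated convergence closes the argument. One caveat: for general $\varphi\in W_0^sL^\Phi$ we need $I_1$-type integrability of $\varphi$ for Young's inequality, i.e.\ $\iint\Phi(|D_s\varphi|)<\infty$. This is handled by scaling $\varphi$, since the identity is linear in $\varphi$ and some multiple $\varphi/\lambda$ has finite modular.
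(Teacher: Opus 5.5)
Your proof is correct, and its skeleton matches the paper's. You test with the positive part of a perturbation of $u$; the paper's $[(1-3t)u+tv]^+$ is just $(1-3t)\bigl(u+\tfrac{t}{1-3t}v\bigr)^+$. You then cancel the $u$-terms with Corollary \ref{igual} and show that the part living on $\{u+t\varphi<0\}$ is $o(t)$.

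You differ from the paper in two steps, and both are in your favour.

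First, the paper derives the inequality \eqref{geq} for nonnegative test functions by rerunning the difference-quotient argument of Lemma \ref{solpos} with $\tilde v_t=(1-2t)u+t(v-u)$. Its majorant needs integrability of $\phi(|D_sv|)|D_sv|^2$, so a density step is implicitly required. You read the same inequality off directly from \eqref{inequality-fund} with $v=u+(u+\varepsilon\varphi)^+$, together with Corollary \ref{igual}.

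Second, the paper controls the error term by splitting $\mathbb{R}^N\times\mathbb{R}^N$ into products of $\Omega_t^+=[\omega_t>0]$ and $\Omega_t^-=[\omega_t\le 0]$, exactly as in the local case. Nonlocally this does not work as written. On $\Omega_t^+\times\Omega_t^-$ one has $D_s(\omega_t^+)=\omega_t(x)/|x-y|^s$, so the first equality in \eqref{trambolho1} drops nonzero mixed terms. Also, $\Omega_t^-$ contains $\mathbb{R}^N\setminus\Omega$, so those sets do not shrink as $t\to0$. Your pointwise bound $D_su\,D_sw_\varepsilon\le\varepsilon|D_su||D_s\varphi|\chi_{E_\varepsilon}$ comes from $t\mapsto t^-$ being nonincreasing and $1$-Lipschitz. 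It treats all regions at once, and it is what actually makes the nonlocal case go through.

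Three small points on your write-up:
\begin{enumerate}
\item Dominated convergence needs $\chi_{E_\varepsilon}\to 0$ pointwise a.e. This holds because $\Omega_\varepsilon\subset\Omega$ decreases, as $\varepsilon\downarrow 0$, to the null set $\{u=0,\ \varphi<0\}$. The fact that $|\Omega_\varepsilon|\to0$ is not by itself the right justification.
\item You only need, and only prove, $\limsup_{\varepsilon\to0^+}\varepsilon^{-1}(1-s)\iint\phi(|D_su|)D_su\,D_sw_\varepsilon\,d\mu\le 0$, not that this quantity tends to $0$.
\item The scaling caveat is unnecessary. The inequality $ab\le\lambda\tilde\Phi(a)+\lambda\Phi(b/\lambda)$ already gives $\phi(|D_su|)|D_su||D_s\varphi|\in L^1(d\mu)$ for every $\varphi\in W_0^sL^\Phi(\Omega)$.
\end{enumerate}
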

\begin{proof}

For each $t>0$ and $v \in W_0^sL^\Phi(\Omega), v \geq 0$, define the function $\displaystyle \tilde{v}_t=(1-2t)u +t(v-u) \in W_0^s L^\Phi(\Omega)$. In a similar way to those done to obtain \eqref{finite} and \eqref{3.17}, we are able  to show that
\begin{eqnarray}\label{A}
\displaystyle \limsup_{t \rightarrow 0}\frac{1}{t}  \int\limits_\Omega \frac{\tilde{v}_{t}^{1-\gamma}-u^{1-\gamma}}{1-\gamma}\mathrm{d}x  
&\leq & \displaystyle(1-s)\iint\limits_{\mathbb{R}^N \times \mathbb{R}^N}\phi(|D_s u|)D_s uD_s(-3u+v )\mathrm{d}\mu\nonumber \\
&\leq& \displaystyle\lambda\iint\limits_{\mathbb{R}^N \times \mathbb{R}^N}\widetilde{\Phi}(\phi(|D_s u|)|D_s u|)\mathrm{d}\mu+ \iint\limits_{\mathbb{R}^N \times \mathbb{R}^N}\Phi\left(\frac{|D_s(-3u+v )|}{\lambda}\right)\mathrm{d}\mu< +\infty,\nonumber\\
\end{eqnarray}
where the last inequality follows from the use of Young's inequality, $\phi(|D_s u|)|D_s u|\in D_{\tilde{\Phi}}$ and the fact that 
$$\displaystyle \iint\limits_{\mathbb{R}^N \times \mathbb{R}^N}\Phi\left(\frac{|D_s(-3u+v )|}{\lambda}\right)\mathrm{d}\mu<\infty$$
for $\lambda>1$ big enough due to $-3u+v\in W_0^sL^\Phi(\Omega)$.

 Reminding that $0<\tilde{v}_t=(1-2t)u +t(v-u) = u-t(3u-v) \in W_0^sL^\Phi(\Omega)$ for
$ t<{1}/{3}$, $u^{-\gamma}\tilde{v}_t \in L^1(\Omega)$ by Lemma \ref{solpos}, and following similar arguments as done to prove  \eqref{vmaioru1},  we have 
\begin{eqnarray*}\label{B}
\begin{array}{lclll}
    \displaystyle\limsup_{t \rightarrow 0}\frac{1}{1-\gamma}\int\limits_\Omega \frac{\tilde{v}_t^{1-\gamma} -u^{1-\gamma}}{t}\mathrm{d}x 
    & \geq &  \displaystyle  \int\limits_\Omega (-3u^{1-\gamma}+  u^{-\gamma}v) \mathrm{d}x
\end{array}    
\end{eqnarray*}
holds so that 
$$\displaystyle +\infty > (1-s)\iint\limits_{\mathbb{R}^N \times \mathbb{R}^N}\phi(|D_s u|)D_s u D_sv \mathrm{d}\mu - 3(1-s) \iint\limits_{\mathbb{R}^N \times \mathbb{R}^N}\phi(|D_s u|)|D_s u|^2 \mathrm{d}\mu \geq \int\limits_\Omega u^{-\gamma}v \mathrm{d}x - 3\int\limits_\Omega u^{1-\gamma}\mathrm{d}x,$$
after using \eqref{A}. Just using this inequality combined with  Corollary \ref{igual} and Proposition \ref{impontant}, we obtain
\begin{equation}\label{geq}
    (1-s)\iint\limits_{\mathbb{R}^N \times \mathbb{R}^N}\phi(|D_s u|)D_s u D_s v \mathrm{d}\mu \geq \int\limits_{\Omega}u^{-\gamma}v \mathrm{d}x, \ \ \ \forall v \in W_0^sL^\Phi(\Omega), v \geq 0.
\end{equation}

For any $v \in W_0^sL^\Phi(\Omega)$ given, define  $\bar{v}_t= [(1-2t)u +t(v-u)]^+, \  t > 0$. So, it follows from  Lemma \ref{u+}  that $\bar{v}_t \in W_0^s L^\Phi(\Omega)$, which enable us to test \eqref{geq} against $\bar{v}_t$. Before doing this, let us define $\omega_t:=(1-3t)u +tv$, $t>0$, and denote by $\Omega_t^+:=[\omega_t >0]$, and $\Omega_t^-:=[\omega_t \leq 0]$ to obtain
\begin{eqnarray}\label{trambolho1}
\begin{array}{lcl}
    0 & \leq & \displaystyle (1-s)\iint\limits_{\mathbb{R}^N \times \mathbb{R}^N}\phi(|D_s u|)D_s u D_s([(1-3t)u +tv]^+)\mathrm{d}\mu -\displaystyle \int\limits_{\Omega}u^{-\gamma}([(1-3t)u +tv]^+)\mathrm{d}x \\
    
     & = & \displaystyle  (1-s)\iint\limits_{\Omega_t^+ \times \Omega_t^+}\phi(|D_s u|)D_s u D_s \omega_t \mathrm{d}\mu -  \int\limits_{\Omega_t^+}u^{-\gamma} \omega_t \mathrm{d}x \\
     
     & = & \displaystyle (1-s) \left[ \ \displaystyle \iint\limits_{\mathbb{R}^N \times \mathbb{R}^N} - \iint\limits_{\Omega_t^- \times \Omega_t^-} -\iint\limits_{\Omega_t^+ \times \Omega_t^-} - \iint\limits_{\Omega_t^- \times \Omega_t^+}\right]\phi(|D_s u|)D_s u D_s \omega_t \mathrm{d}\mu -  \left[\int\limits_\Omega - \ \int\limits_{\Omega_t^-} \right]u^{-\gamma}\omega_t \mathrm{d}x \\

     & = & \displaystyle  (1-s)\left[ \ \iint\limits_{\mathbb{R}^N \times \mathbb{R}^N}\phi(|D_s u|)|D_s u|^2 \mathrm{d}\mu + t \iint\limits_{\mathbb{R}^N \times \mathbb{R}^N} \phi(|D_s u|)D_s uD_s v \mathrm{d}\mu \right.\\
     
     &-& \displaystyle \left.3t \iint\limits_{\mathbb{R}^N \times \mathbb{R}^N}\phi(|D_s u|)|D_s u|^2 \mathrm{d}\mu \right] -\displaystyle  \int\limits_{\Omega}u^{1-\gamma}\mathrm{d}x - t \int\limits_{\Omega}u^{-\gamma}v\mathrm{d}x + 3t \int\limits_\Omega u^{1-\gamma}\mathrm{d}x \\
     
     & + & \displaystyle  (3t-1)(1-s)\left[ \ \iint\limits_{\Omega_t^- \times \Omega_t^-} +\iint\limits_{\Omega_t^+ \times \Omega_t^-} +\iint\limits_{\Omega_t^- \times \Omega_t^+} \right] \phi(|D_s u|)|D_s u|^2\mathrm{d}\mu\\
     
     &-& \displaystyle  t(1-s) \left[ \ \iint\limits_{\Omega_t^- \times \Omega_t^-} +\iint\limits_{\Omega_t^+ \times \Omega_t^-} +\iint\limits_{\Omega_t^- \times \Omega_t^+}  \right] \phi(|D_s u|)D_s u D_s v\mathrm{d}\mu +   \int\limits_{\Omega_t^-}u^{-\gamma}\omega_t\mathrm{d}x.
\end{array}
\end{eqnarray}

So, by taking $t<{1}/{3}$ and using Corollary \ref{igual}, we have in \eqref{trambolho1}
\begin{eqnarray}\label{trambolhinho}
\begin{array}{lcl}
    0 & \leq &  \displaystyle   t \left[(1-s) \iint\limits_{\mathbb{R}^N \times\mathbb{R}^N}\phi(|D_s u|)D_s u D_s v \mathrm{d}\mu - \int\limits_\Omega u^{-\gamma}\mathrm{d}x  \right.  \\
    
    & - & \displaystyle \left. (1-s) \Big[ \ \iint\limits_{\Omega_t^- \times \Omega_t^-}+ \iint\limits_{\Omega_t^+ \times \Omega_t^-}+ \iint\limits_{\Omega_t^- \times \Omega_t^+} \Big]\phi(|D_s u|)D_s u D_s v \mathrm{d}\mu\right]~\mbox{for all }t>0.
\end{array}
\end{eqnarray}

By noting that 
$$\lim_{t \rightarrow 0}\chi_{\Omega_t^+}(x)=1, \ \text{a.e. in} \ \Omega \ \text{and} \ \lim_{t \rightarrow 0}\chi_{\Omega_t^-}(y)=0, \ \text{a.e. in} \ \Omega,$$ we have from  Dominated Convergence Theorem that 
$$\displaystyle (1-s)\left[ \ \iint\limits_{\Omega_t^- \times \Omega_t^-} +\iint\limits_{\Omega_t^+ \times \Omega_t^-} + \iint\limits_{\Omega_t^- \times \Omega_t^+}\right]\phi(|D_s u|)D_s u D_s v \mathrm{d}\mu \longrightarrow 0, \ \ \text{as} \ \ t \rightarrow 0, \ \forall v \in  W_0^s L^\Phi(\Omega),$$
whence follows, combined with \eqref{trambolhinho}, that
\begin{equation*}\label{3.20}
0 \leq (1-s) \iint\limits_{\mathbb{R}^N \times \mathbb{R}^N}\phi(|D_s u|)D_s u D_s v \mathrm{d}\mu - \int\limits_{\Omega}u^{-\gamma}v\mathrm{d}x, \ \ \forall v \in  W_0^s L^\Phi(\Omega)
\end{equation*}
holds. As $v \in W_0^sL^\Phi(\Omega)$ was taken arbitrary, we have
\begin{equation*}\label{3.14}
\displaystyle  (1-s)\iint\limits_{\mathbb{R}^N \times \mathbb{R}^N} \phi(|D_s u|)D_s u D_s v \mathrm{d}\mu -\int\limits_\Omega u^{-\gamma}v \mathrm{d}x = 0, \ \ \forall v \in  W_0^s L^\Phi(\Omega),  
\end{equation*}
that is, $u \in W_0^s L^\Phi (\Omega)$ is an weak solution for the problem \eqref{Pa}. This ends the proof.
\end{proof}

In fact, we have uniqueness of the weak solutions to the problem \eqref{Pa}.
\begin{prop}\label{unicidadesol} Assume the assumptions of Theorem \ref{pricipal-1}. Then
$u \in W_0^s L^\Phi(\Omega)$ is the
unique weak solution to problem \eqref{Pa}.
\end{prop}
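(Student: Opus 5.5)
We argue by contradiction with the standard monotonicity argument for singular problems: suppose $u_1,u_2\in W_0^sL^\Phi(\Omega)$ are two positive weak solutions of \eqref{Pa}, and test both weak formulations with $w:=(u_1-u_2)^+$. By Lemma \ref{u+}, $w\in W_0^sL^\Phi(\Omega)$, so it is an admissible test function in the definition of weak solution. Both right-hand sides are finite: $u_i^{-\gamma}w\in L^1(\Omega)$ because $w\le u_1$ gives $u_1^{-\gamma}w\le u_1^{1-\gamma}\in L^1$, while on $[u_1>u_2]$ we have $u_2^{-\gamma}w\le u_2^{-\gamma}u_1$, which is integrable by the weak formulation for $u_2$ tested with $u_1$ (or by the analogue of Lemma \ref{solpos}).

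Subtracting the two identities gives
$$\frac{1-s}{2}\iint\limits_{\mathbb{R}^N\times\mathbb{R}^N}\Big[\phi(|D_su_1|)D_su_1-\phi(|D_su_2|)D_su_2\Big]D_sw\,\mathrm{d}\mu=\int\limits_{[u_1>u_2]}(u_1^{-\gamma}-u_2^{-\gamma})(u_1-u_2)\,\mathrm{d}x\le 0,$$
since $t\mapsto t^{-\gamma}$ is decreasing. For the left side we use the pointwise inequality: with $a=D_su_1$, $b=D_su_2$ and $g(t)=\phi(|t|)t$ increasing by $(\phi_1)$, we have $(g(a)-g(b))\,D_s(u_1-u_2)^+\ge 0$. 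This is the standard fact that $(\xi-\eta)^+$ applied to $x,y$ satisfies $(X^+-Y^+)(X-Y)\ge (X^+-Y^+)^2\ge0$ with the same sign structure. To be careful, we write $D_s w=\theta\,D_s(u_1-u_2)$ with $\theta(x,y)\in[0,1]$, which holds because $r\mapsto r^+$ is $1$-Lipschitz and nondecreasing. Then the integrand equals $\theta\,(g(a)-g(b))(a-b)\ge0$.

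Both sides therefore vanish, and the right side forces $(u_1^{-\gamma}-u_2^{-\gamma})(u_1-u_2)=0$ a.e. on $[u_1>u_2]$. Strict monotonicity of $t^{-\gamma}$ then gives $|[u_1>u_2]|=0$, so $u_1\le u_2$; exchanging the roles yields $u_1=u_2$.

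The main obstacle is justifying integrability of each term separately when $\Phi\notin\Delta_2$. The products $\phi(|D_su_i|)D_su_i\,D_sw$ must lie in $L^1(\mathrm{d}\mu)$, which requires $\phi(|D_su_i|)|D_su_i|\in\mathcal{L}_{\tilde\Phi}$ (that is, $u_i\in D_{\tilde\Phi}$) together with Young's inequality against a suitable multiple of $D_sw$. For the constructed solution this is Lemma \ref{Domin}. For an arbitrary weak solution, the first thing to try is to test its equation with $u_i$ itself, giving $(1-s)\iint\phi(|D_su_i|)|D_su_i|^2\mathrm{d}\mu=\int u_i^{1-\gamma}<\infty$, and then to repeat the Young-equality argument from Lemma \ref{Domin} to get $u_i\in D_\Phi\cap D_{\tilde\Phi}$. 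If this integrability can only be recovered in $L^\Phi$ rather than the class $\mathcal L_{\tilde\Phi}$, we instead take $\lambda$-scaled Young inequalities, as in \eqref{A}.
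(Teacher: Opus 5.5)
Your argument is correct and is essentially the paper's. The paper tests both weak formulations directly with $u-v$: monotonicity of $t\mapsto\phi(|t|)t$ makes the left side nonnegative, and monotonicity of $t\mapsto t^{-\gamma}$ makes the right side nonpositive. That gives $u=v$ in one step, with no need for Lemma \ref{u+} or the exchange of roles. Your integrability discussion is extra care that the paper leaves implicit in the definition of weak solution, and your remedy does close it: test with $u_i$, use Young's equality to get $u_i\in D_\Phi\cap D_{\tilde\Phi}$, then apply $xy\le\lambda\tilde\Phi(x)+\lambda\Phi(y/\lambda)$.
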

\begin{proof}
Suppose that $v \in W^s_0 L^\Phi(\Omega)$ is a weak solution for the problem \eqref{Pa} as well. So,
\begin{equation*}\label{u1-u2}
\displaystyle (1-s)\iint\limits_{\mathbb{R}^N \times \mathbb{R}^N}\left[\phi(|D_s u|)D_s u - \phi(|D_s v|)D_s v \right]D_s(u - v) \mathrm{d}\mu = \int\limits_{\Omega} (u^{-\gamma}-v^{-\gamma})(u-v)\mathrm{d}x \leq 0, 
\end{equation*}
where the inequality follows from $\gamma>0$.

On the other hand, as  $\Phi$ is  convex, due to $\phi(|t|)t$ be increasing, we have 
$$\displaystyle 0 \leq (1-s)\iint\limits_{\mathbb{R}^N \times \mathbb{R}^N}\left[\phi(|D_s u|)D_s u - \phi(|D_s v|)D_s v \right]D_s(u - v) \mathrm{d}\mu$$
so that
\begin{equation*}\label{contradic}
\displaystyle \int\limits_{\Omega}(u^{-\gamma}-u_2^{-\gamma})(u-u_2)\mathrm{d}x  = 0 ,  
\end{equation*}
which implies that $u=v$. this finishes the proof.\
\end{proof}

\textbf{Proof of Theorem \ref{pricipal-1}-Conclusion:}
The proof of the Theorem \ref{pricipal-1} follows directly from the Propositions \ref{propoprincipal} and \ref{unicidadesol}.

\textbf{Proof of Theorem \ref{pricipal-2}-Conclusion:} The proof follow the same strategy and arguments as used to prove the Theorem \ref{pricipal-1}.

\section{Proof of Theorem \ref{gammaconvergence}}

The arguments of this section are inspired on those ones found in \cite{FBS}. First, we note that the $N$-function 
\begin{equation}\label{Psi2}
\Psi(t)=\lim_{s \uparrow 1}\int_0^t \int_{\mathbb{S}^{N-1}} \Phi(\rho|z_N|)\mathrm{d}S_z \frac{\mathrm{d}\rho}{\rho}=\int_0^t \int_{\mathbb{S}^{N-1}} \Phi(\rho|z_N|)\mathrm{d}S_z \frac{\mathrm{d}\rho}{\rho},~t\in \mathbb{R},
\end{equation}
defined in \eqref{Psi1}, also satisfies
$$\begin{array}{c}\displaystyle 0\;<\;\Psi^{\prime}(t)\;\leq\;\frac{\Phi(t)}{t}\;|\mathcal{S}^{N-1}|,~t>0
\end{array} 
$$
and
$$\begin{array}{c}\displaystyle\;\;\frac{(\ell-1)}{t^2}\int_{\mathcal{S}^{N-1}}\Phi(t|z_{N}|)d\mathcal{S}_{N}\;\;\leq\;\Psi^{\prime\prime}(t)\;\leq\;(m-1)\left|\mathcal{S}^{N-1}\right|\;\frac{\Phi(t)}{t^2}\;
\end{array}~t>0, $$ 
whence follows, in particular, that $\Psi$ is strictly convex.

We already know from \cite[Proposition 2.16]{FBS} the below Proposition.
\begin{prop}
 $\Psi$ is an $N$-function. 
Furthermore, there exist $k_1, k_2 >0$ such that $$k_1\Phi(t) \leq \Psi(t) \leq k_2\Phi(t), \ \forall t>0,$$
that is, the N-function $\Psi$ is equivalent to $N$-function $\Phi$.
\end{prop}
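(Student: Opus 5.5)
The statement to prove is the final Proposition: $\Psi$ is an $N$-function and there are $k_1,k_2>0$ with $k_1\Phi(t)\le\Psi(t)\le k_2\Phi(t)$ for all $t>0$. The plan is to work directly from the representation \eqref{Psi2}, $\Psi(t)=\int_0^t g(\rho)\,\frac{d\rho}{\rho}$ with $g(\rho):=\int_{\mathbb{S}^{N-1}}\Phi(\rho|z_N|)\,dS_z$. The basic tool is convexity of $\Phi$ together with $\Phi(0)=0$: for $0\le\lambda\le1$ this gives $\Phi(\lambda t)\le\lambda\Phi(t)$, and for $\lambda\ge1$ it gives $\Phi(\lambda t)\ge\lambda\Phi(t)$.

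\textbf{Upper bound.} Since $|z_N|\le1$ and $\Phi$ is increasing, $g(\rho)\le|\mathbb{S}^{N-1}|\Phi(\rho)$. Also $\Phi(\rho)\le(\rho/t)\Phi(t)$ for $\rho\le t$. Hence
$$\Psi(t)\le|\mathbb{S}^{N-1}|\int_0^t\frac{\Phi(\rho)}{\rho}\,d\rho\le|\mathbb{S}^{N-1}|\,\Phi(t),$$
so we may take $k_2=|\mathbb{S}^{N-1}|$. The integral near $0$ converges because $\Phi(\rho)/\rho\to0$ there.

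\textbf{Lower bound.} Restrict the sphere integral to $A:=\{z\in\mathbb{S}^{N-1}:|z_N|\ge1/2\}$, which has positive measure $|A|$, and restrict $\rho$ to $[t/2,t]$. On this region $\rho|z_N|\ge t/4$, so
$$\Psi(t)\ge\int_{t/2}^t|A|\,\Phi(t/4)\,\frac{d\rho}{\rho}=|A|\log 2\;\Phi(t/4).$$
Convexity only gives $\Phi(t/4)\le\Phi(t)/4$, which points the wrong way. Here is the main obstacle: without $\Delta_2$ one cannot in general bound $\Phi(t)$ by a multiple of $\Phi(t/4)$. For example, with $\Phi(t)=e^{t^2}-1$ there is no constant $k_1$ with $k_1\Phi(t)\le\Phi(t/4)$ for large $t$.

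I would therefore prove the lower bound under the hypothesis $\Phi\in\Delta_2$, which is exactly the standing assumption of Theorem~\ref{gammaconvergence} and of \cite{FBS}. Iterating $\Phi(2t)\le K\Phi(t)$ twice gives $\Phi(t)\le K^2\Phi(t/4)$, so $k_1=|A|\log 2/K^2$ works. An alternative route uses the growth exponent $m$ from Proposition~\ref{thm21}: the bound $\Phi(\lambda t)\ge\lambda^m\Phi(t)$ for $\lambda\le1$ gives $g(\rho)\ge\Phi(\rho)\int|z_N|^m\,dS_z$, and then $\int_0^t\Phi(\rho)\rho^{-1}\,d\rho\ge\Phi(t)/m$. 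Without $\Delta_2$ the lower inequality can fail at infinity, and I would add a remark noting that the equivalence is meant under $\Delta_2$.

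\textbf{$N$-function property.} Differentiating gives $\Psi'(t)=g(t)/t$, and $g(t)/t=\int\Phi(t|z_N|)/t\,dS_z$. This quantity is positive for $t>0$, tends to $0$ as $t\to0$, and tends to $\infty$ as $t\to\infty$; the last two follow by monotone convergence from $\Phi(\tau)/\tau\to0$ and $\Phi(\tau)/\tau\to\infty$. It is also increasing in $t$, since $\Phi(\tau)/\tau$ is increasing by convexity. Hence $\Psi$ is convex, even, and satisfies $\Psi(t)/t\to0$ as $t\to0$ and $\Psi(t)/t\to\infty$ as $t\to\infty$, which makes it an $N$-function.
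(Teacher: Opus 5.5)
Your proof is correct, and in substance it is the argument the paper relies on. The paper gives no proof of its own; it only cites Proposition~2.16 of \cite{FBS}. That result is proved there under the standing growth condition $1<\ell\le m<\infty$, essentially by integrating the bounds $\lambda^{m}\Phi(t)\le\Phi(\lambda t)\le\lambda^{\ell}\Phi(t)$, $0\le\lambda\le 1$, over $\mathbb{S}^{N-1}$ and in $\rho$. That is your ``alternative route''. Your cap-plus-$[t/2,t]$ estimate is the same idea, with the doubling constant $K$ in place of $m$. If you keep the alternative route, note that Proposition~\ref{thm21} gives $t^2\phi(t)\le m\Phi(t)$ only for $t\ge t_0$. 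The bound for all $t>0$ follows from global $\Delta_2$ via $t\Phi'(t)\le\Phi(2t)-\Phi(t)\le(K-1)\Phi(t)$.

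Your restriction to $\Delta_2$ is a needed correction, not a weakness: the proposition is printed without hypotheses, and its lower bound is false in general. Your stated reason, however, only shows that your particular estimate cannot close. The failure itself follows from your own upper bound: for $\Phi(t)=e^{t^2}-1$,
\[
\Psi(t)\le|\mathbb{S}^{N-1}|\int_0^t\frac{e^{\rho^2}-1}{\rho}\,d\rho\sim|\mathbb{S}^{N-1}|\,\frac{e^{t^2}}{2t^2}\quad(t\to\infty),
\]
so $\Psi(t)/\Phi(t)\to 0$. Similarly, an $N$-function equal to $e^{-1/t}$ near $0$ gives $\Psi(t)\le|\mathbb{S}^{N-1}|\,t\,\Phi(t)$ for small $t$. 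So ``for all $t>0$'' requires the global $\Delta_2$ of Theorem~\ref{gammaconvergence}, not merely $\Delta_2(\infty)$.

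Conversely, with no $\Delta_2$ at all, your cap estimate and convexity still give the argument-scaled equivalence $\Phi(ct)\le\Psi(t)\le\Phi(Ct)$, with $c=\tfrac14\min\{1,|A|\log 2\}$ and $C=\max\{1,|\mathbb{S}^{N-1}|\}$. This is already enough for $L^{\Psi}(\Omega)=L^{\Phi}(\Omega)$ with equivalent norms.
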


It is worth mentioning that as in \cite[Theorem 4.1]{FBS}, we have the following result that explicitly shows the limit of the operator of $(-\Delta_\Phi)^s$ as $s \uparrow 1$. The validity of this theorem  for the case $\ell = 1$ is also proved in  \cite[Theorem 1.1 and Remark 1.2]{Cianchi}.

\begin{thm}\label{limitoperator}
Let $\Phi$ be an $N$-function such that $\Phi\in \Delta_2$. Then, given $u \in L^\Phi(\Omega)$ and $0<s<1$ it holds that 
$$\lim_{s \uparrow 1}I_1(u)=\int\limits_{\Omega}\Psi(|\nabla u|)\mathrm{d}x,$$
where $\Psi$ is defined in \eqref{Psi1}.
\end{thm}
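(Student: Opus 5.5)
The plan is the Bourgain–Brezis–Mironescu scheme, following \cite[Theorem 4.1]{FBS}: first prove the limit for smooth compactly supported $u$, then pass to $u\in W^1_0L^\Phi(\Omega)$ by density, and finally show that $\lim_{s\uparrow1}I_1(u)=+\infty$ when $u\in L^\Phi(\Omega)\setminus W^1_0L^\Phi(\Omega)$. This last case is consistent with the convention $\int_\Omega\Psi(|\nabla u|)\,dx=+\infty$ for such $u$. The hypothesis $\Phi\in\Delta_2$ gives two things we need: smooth functions are dense in modular, and the perturbation estimates below are uniform.

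\textbf{Step 1: $u\in C_c^2(\mathbb{R}^N)$.} Write $I_1(u)=(1-s)\int_{\mathbb{R}^N}\int_{\mathbb{R}^N}\Phi\big(|u(x+h)-u(x)|/|h|^s\big)\,|h|^{-N}\,dh\,dx$ and split the $h$-integral into $|h|\ge1$ and $|h|<1$.

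On $|h|\ge1$ the integrand is at most $\Phi(2\|u\|_\infty|h|^{-s})\,|h|^{-N}$, and the $x$-integration runs over a set of measure at most $2|\mathrm{supp}\,u|$. Convexity gives $\Phi(ct)\le c\,\Phi(1)\,t$ for $ct\le1$, so this piece is bounded uniformly for $s$ near $1$. It therefore vanishes after multiplication by $(1-s)$.

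On $|h|<1$, Taylor's formula gives $u(x+h)-u(x)=\nabla u(x)\cdot h+R(x,h)$ with $|R|\le C|h|^2$. Let $a=|\nabla u(x)\cdot h|\,|h|^{-s}$. By the mean value theorem together with Lemma \ref{Phi} and $\Delta_2$ (used through $\phi(t)t^2\le m\Phi(t)$),
\[
\big|\Phi(a+b)-\Phi(a)\big|\le C\,b,\qquad b\le C|h|^{2-s}.
\]
This error integrates to $O\big((1-s)\int_0^1 r^{1-s}\,dr\big)=O(1-s)$. For the main term, pass to polar coordinates $h=rz$ and use rotation invariance to replace $|\nabla u(x)\cdot z|$ by $|\nabla u(x)|\,|z_N|$. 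Then
\[
(1-s)\int_0^1\int_{\mathbb{S}^{N-1}}\Phi\big(|\nabla u(x)|\,|z_N|\,r^{1-s}\big)\,dS_z\,\frac{dr}{r}\longrightarrow\Psi(|\nabla u(x)|)
\]
by \eqref{Psi1}. The convergence is dominated by $|\mathbb{S}^{N-1}|\int_0^{|\nabla u|}\Phi(\rho)\rho^{-1}\,d\rho\le C\,\Phi(\|\nabla u\|_\infty)$, so integrating in $x$ gives the claim for smooth $u$.

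\textbf{Step 2: $u\in W_0^1L^\Phi(\Omega)$.} Choose $u_k\in C_c^\infty$ with $\int_\Omega\Phi(\lambda|\nabla(u-u_k)|)\,dx\to0$ for every $\lambda>0$; this is possible because $\Phi\in\Delta_2$. Next use the uniform bound from \cite{FBS}:
\[
I_1(w)\le C\int\big(\Phi(|\nabla w|)+\Phi(|w|)\big)\,dx,
\]
with $C$ independent of $s$ near $1$. Combine it with the convexity splitting
\[
\Phi(A)\le(1-\varepsilon)\,\Phi\Big(\frac{B}{1-\varepsilon}\Big)+\varepsilon\,\Phi\Big(\frac{A-B}{\varepsilon}\Big),
\]
applied in both directions, and with $\Phi(t/(1-\varepsilon))\le(1-\varepsilon)^{-m}\Phi(t)$, which follows from $\Delta_2$. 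Applying Step 1 to $u_k$ then yields $\limsup_{s\uparrow1}I_1(u)$ and $\liminf_{s\uparrow1}I_1(u)$ within $O(\varepsilon)+o_k(1)$ of $\int\Psi(|\nabla u|)\,dx$. Letting $k\to\infty$ and then $\varepsilon\to0$ concludes this case.

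\textbf{Step 3: $u\notin W_0^1L^\Phi(\Omega)$.} Mollify: $u_\delta=u*\rho_\delta$. Jensen's inequality gives $I_1(u_\delta)\le I_1(u)$, and Step 1 applies to $u_\delta$, so
\[
\liminf_{s\uparrow1}I_1(u)\ge\int\Psi(|\nabla u_\delta|)\,dx.
\]
If the liminf were finite, these quantities would be bounded in $\delta$. Then $\nabla u_\delta$ would be bounded in $L^\Phi$, and weak$^*$ compactness together with lower semicontinuity would place $u$ in $W^1L^\Phi$, a contradiction.

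\textbf{Main obstacle.} I expect the hardest part to be making the Step 1 error estimate and the Step 2 density argument uniform in $s$. Both rest on $\Delta_2$, which is why this is the one place in the paper where that hypothesis is imposed. I would first try to extract everything from the single inequality $\phi(t)t^2\le m\Phi(t)$.
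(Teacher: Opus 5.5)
Your proposal is correct and is essentially the Bourgain--Brezis--Mironescu argument of \cite[Theorem 4.1]{FBS}, which the paper cites rather than reproduces: Taylor expansion for $C^2_c$ functions, density via the $s$-uniform bound and a $\Delta_2$ convexity splitting, and mollification plus Jensen for the non-Sobolev case. Your version uses only $m<\infty$ and never $\ell>1$, so it also covers the case $\ell=1$, for which the paper appeals separately to \cite{Cianchi}. Two small remarks: the main term in Step 1 is in fact an identity for every $s$, since the substitution $\rho=tr^{1-s}$ gives $(1-s)\int_0^1\Phi(t|z_N|r^{1-s})\,dr/r=\int_0^t\Phi(\rho|z_N|)\,d\rho/\rho$; and the contradiction in Step 3 tacitly uses that, on the Lipschitz domain $\Omega$, a function whose zero extension lies in $W^1L^\Phi(\mathbb{R}^N)$ belongs to $W^1_0L^\Phi(\Omega)$ (alternatively, split cases according to the zero extension and run Step 2 with approximants in $C_c^\infty(\mathbb{R}^N)$).
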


Now, note that as a consequence of  $\displaystyle \lim_{t \rightarrow +\infty}{\Phi(t)}/{t}=+\infty$, there  exists a $T>1$ big enough such that $$\displaystyle \frac{1}{2C}\Phi(t) > t, \ \forall ~t \geq T,$$ where $C>0$ is the Poincaré's  constant. So, by using  $I(u_s)<0$ (see Lemma \ref{negative-sing}), we have 
\begin{eqnarray}\label{desig}
    \begin{array}{lclll}
    \displaystyle (1-s) \iint\limits_{\mathbb{R}^N \times \mathbb{R}^N} \Phi(|D_s u_s|) \mathrm{d}\mu  & \leq & \displaystyle \frac{1}{1-\gamma}\int\limits_\Omega |u_s|^{1-\gamma} \mathrm{d}x   & \leq & \displaystyle T|\Omega| + \int\limits_{[u_s > T]}|u_s| \mathrm{d}x \\
    
    & \leq & \displaystyle T|\Omega| + \frac{1}{2C}\int\limits_\Omega \Phi(|u_s|)\mathrm{d}x & \leq & \displaystyle T|\Omega| + \frac{(1-s)}{2} \iint\limits_{\mathbb{R}^N \times \mathbb{R}^N} \Phi(|D_s u_s|)\mathrm{d}\mu,
    \end{array}
\end{eqnarray}
where, the second inequality in the first line above is a consequence of Young's inequality. After  \eqref{desig}, we are able to prove the below result whose proof follows similar arguments as those done in \cite[Theorem 5.1]{FBS}.

\begin{thm}\label{caseofasequence}
Assume the hypotheses of Theorem \ref{gammaconvergence}. Let $0 < s_n \uparrow 1$ and $\{ u_n\}_{n \in \mathbb{N}} \subset L^\Phi(\mathbb{R}^N)$ such that 
$$\displaystyle \sup_{n \in \mathbb{N}}(1-s_n)\iint\limits_{\mathbb{R}^N \times \mathbb{R}^N}\Phi(|D_s u_n|)\mathrm{d}\mu < +\infty \ \ \ \text{and} \ \ \ \sup_{n \in \mathbb{N}}\int\limits_\Omega \Phi(u_n)\mathrm{d}x < +\infty.$$
Then there exists $u \in L^\Phi(\Omega)$ and a subsequence $\{ u_{n_j} \}_{j \in \mathbb{N}} \subset \{ u_n \}_{n \in \mathbb{N}}$ such that $u_{n_j} \rightarrow u$ in $L^\Phi_{loc}(\mathbb{R}^N)$. Moreover, $u \in W^1 L^\Phi(\mathbb{R}^N)$ and holds $$\displaystyle \rho_\Psi (\nabla u) \leq \liminf_{n \rightarrow +\infty} (1-s_n)\iint\limits_{\mathbb{R}^N \times \mathbb{R}^N}\Phi(|D_s u_n|)\mathrm{d} \mu.$$
\end{thm}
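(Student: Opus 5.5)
The plan follows the Bourgain--Brezis--Mironescu-type compactness scheme of \cite[Theorem 5.1]{FBS}, adapted to the Orlicz setting through mollification and a Fr\'echet--Kolmogorov argument. Set $M:=\sup_n(1-s_n)\iint\Phi(|D_{s_n}u_n|)\,d\mu$ and fix a standard mollifier $\eta_\varepsilon$.

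\textbf{Step 1: uniform control of translations.} We first bound the modular of $u_n(\cdot+h)-u_n$ uniformly in $n$. Passing to polar coordinates $y=x+r z$ and using the convexity of $\Phi$ along segments, as in \cite{FBS}, we aim for an estimate of the form
$$\int_{\mathbb{R}^N}\Phi\Big(\frac{|u_n(x+h)-u_n(x)|}{C|h|}\Big)dx\le C\,(1-s_n)\iint\Phi(|D_{s_n}u_n|)\,d\mu\le CM,\qquad |h|\le 1.$$
This estimate uses $\Phi\in\Delta_2$ to absorb the constants coming from averaging over a ball of radius $|h|$. The same computation shows that $\int\Phi(|u_n*\eta_\varepsilon-u_n|/\varepsilon)\le C M$. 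It also shows that $\nabla(u_n*\eta_\varepsilon)$ is bounded in $L^\Phi$ uniformly in $n$ for each fixed $\varepsilon$.

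\textbf{Step 2: compactness.} For each fixed $\varepsilon$, the family $\{u_n*\eta_\varepsilon\}$ is equibounded and equicontinuous on compact sets, so Arzel\`a--Ascoli applies. Combining this with $\|u_n*\eta_\varepsilon-u_n\|_\Phi\le C\varepsilon$, which follows from Step 1 and $\Delta_2$, a diagonal argument shows that $(u_n)$ is totally bounded in $L^\Phi_{loc}(\mathbb{R}^N)$. We then extract $u_{n_j}\to u$ in $L^\Phi_{loc}$, and hence a.e. The bound $\sup_n\int\Phi(u_n)<\infty$ together with Fatou gives $u\in L^\Phi$. Because $\Phi\in\Delta_2$, modular and norm convergence coincide.

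\textbf{Step 3: $u\in W^1L^\Phi$ and the liminf inequality.} Passing to the limit in Step 1 yields $\int\Phi(|u(\cdot+h)-u|/(C|h|))\le CM$ for all $h$. This difference-quotient bound gives $\nabla u\in L^\Phi$, by weak$^*$ compactness of difference quotients in $L^\Phi=(E^{\tilde\Phi})'$. For the sharp inequality we mollify: since $\Phi$ is convex, Jensen gives
$$(1-s)\iint\Phi(|D_s(u_n*\eta_\varepsilon)|)\,d\mu\le(1-s)\iint\Phi(|D_su_n|)\,d\mu.$$
Now fix $\varepsilon$ and let $n\to\infty$, using that $u_n*\eta_\varepsilon\to u*\eta_\varepsilon$ in $C^2_{loc}$ with uniform bounds. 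A second-order Taylor expansion of $u*\eta_\varepsilon$ then shows
$$(1-s_n)\iint\Phi(|D_{s_n}(u_n*\eta_\varepsilon)|)\,d\mu\to\int\Psi(|\nabla(u*\eta_\varepsilon)|)\,dx,$$
through the same polar-coordinate computation that defines $\Psi$ in \eqref{Psi1}, that is, Theorem \ref{limitoperator} uniformly along the sequence. Finally we let $\varepsilon\to0$. By Jensen again, $\int\Psi(|\nabla(u*\eta_\varepsilon)|)\le\int\Psi(|\nabla u|)$, while lower semicontinuity of $\rho_\Psi$ along $\nabla(u*\eta_\varepsilon)\to\nabla u$ a.e. gives the reverse inequality in the limit. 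Together these yield $\rho_\Psi(\nabla u)\le\liminf(1-s_n)\iint\Phi(|D_{s_n}u_n|)\,d\mu$.

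\textbf{Main obstacle.} The main obstacle is the limit step for fixed $\varepsilon$, which needs a \emph{moving} sequence $u_n*\eta_\varepsilon$ rather than a fixed function. The difficulty is to split the integral into $|x-y|<\delta$ and $|x-y|\ge\delta$. The far part is $O(1-s_n)$ and vanishes, because the uniform $L^\Phi$ bound makes it controlled by $\Delta_2$. In the near part the $C^2$ convergence replaces $D_{s_n}(u_n*\eta_\varepsilon)$ by $\nabla(u*\eta_\varepsilon)\cdot z\,r^{1-s_n}$ up to an error $O(r^{2-s_n})$, and $\Delta_2$ again absorbs this error.
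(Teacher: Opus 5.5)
Your overall plan is the Bourgain--Brezis--Mironescu scheme of \cite[Theorem 5.1]{FBS} that the paper simply cites: mollify; use Arzel\`a--Ascoli plus a uniform approximation estimate for compactness; apply Jensen to pass the modular to $u_n*\eta_\varepsilon$; Taylor expand; let $\varepsilon\to0$. However, Step 1, on which your compactness rests, is false as stated.

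\textbf{The Step 1 estimate.} For fixed $n$, the bound $\int\Phi(|u_n(x+h)-u_n(x)|/(C|h|))\,dx\le CM$ for all $|h|\le1$ says the difference quotients of $u_n$ are bounded in $L^\Phi$, i.e.\ $u_n\in W^1L^\Phi$. The hypotheses only give $u_n\in W^{s_n}L^\Phi$; for $\Phi(t)=t^p$, $p>1$, take any normalized $u_n\in W^{s_n,p}\setminus W^{1,p}$. The same objection applies to $\int\Phi(|u_n*\eta_\varepsilon-u_n|/\varepsilon)\le CM$. The correct scale is $|h|^{s_n}$ (resp.\ $\varepsilon^{s_n}$). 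Even at that scale, the derivation you indicate (convexity plus averaging over a ball of radius $|h|$, with $\Delta_2$ absorbing constants) only yields
\[
F(h):=\int_{\mathbb{R}^N}\Phi\Big(\frac{|u(x+h)-u(x)|}{|h|^{s}}\Big)dx\le C\iint\limits_{\mathbb{R}^N\times\mathbb{R}^N}\Phi(|D_su|)\,\mathrm{d}\mu=\frac{C}{1-s}\,I_1(u).
\]
This degenerates like $1/(1-s_n)$ and gives no smallness of $\|u_n*\eta_\varepsilon-u_n\|_\Phi$ uniform in $n$. What is missing is the multiscale argument that recovers the factor $1-s$:
\begin{enumerate}
\item Averaging the triangle inequality over $z\in B(h/2,|h|/2)$, using $\Delta_2$, gives $F(h)\le C|h|^{-N}\int_{B_{|h|}}F(z)\,dz$.
\item Convexity together with $\Phi(\lambda t)\le\lambda^m\Phi(t)$ for $\lambda\ge1$ gives $F(2h)\le 2^{(1-s)m}F(h)$.
\item Iterating down to any scale $\rho\le|h|$ gives $(\rho/|h|)^{(1-s)m}F(h)\le C\rho^{-N}\int_{B_\rho}F$.
\item Integrating this against $d\rho/\rho$ over $(0,|h|)$ yields $F(h)\le C\,m\,(1-s)\iint\Phi(|D_su|)\,\mathrm{d}\mu=C\,m\,I_1(u)$ for every $h$.
\end{enumerate}

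\textbf{What survives once Step 1 is corrected.} Jensen gives $\int\Phi(|u_n*\eta_\varepsilon-u_n|/\varepsilon^{s_n})\le\sup_{|z|\le\varepsilon}F(z)\le CM$. Hence $\|u_n*\eta_\varepsilon-u_n\|_\Phi\le C\varepsilon^{s_n}\le C\varepsilon^{1/2}$ once $s_n\ge1/2$, which is all the total-boundedness argument of Step 2 needs. Also $u\in W^1L^\Phi$ follows by letting $n\to\infty$ at fixed $h$ (Fatou, with $|h|^{s_n}\to|h|$). Alternatively it follows directly from the mollified lower bound, which controls $\rho_\Psi(\nabla(u*\eta_\varepsilon))$ uniformly in $\varepsilon$.

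\textbf{A further correction in Step 3.} $C^2_{\mathrm{loc}}$ convergence of $u_n*\eta_\varepsilon$ does not give convergence of the full-space modulars to $\int\Psi(|\nabla(u*\eta_\varepsilon)|)\,dx$, because mass can escape to infinity. For example, translates of a fixed bump drifting to infinity tend to $0$ in $L^\Phi_{\mathrm{loc}}$. Only the lower bound is available, and it is all you need. Restrict to $x\in K$ compact and $|x-y|<\delta$, where the Taylor expansion is uniform, to get $\int_K\Psi(|\nabla(u*\eta_\varepsilon)|)\,dx\le\liminf_n(1-s_n)\iint\Phi(|D_{s_n}(u_n*\eta_\varepsilon)|)\,\mathrm{d}\mu$. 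Then let $K\uparrow\mathbb{R}^N$ and $\varepsilon\to0$, using lower semicontinuity of $\rho_\Psi$. With this, the far-field estimate in your last paragraph becomes unnecessary.
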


To complete our arguments, let us remember the following concepts and results.
\begin{defi} Let $X$ be a metric space and $F, F_{j}: X \longrightarrow \overline{\mathbb{R}}$. We say that \textbf{\textit{$F_{j}$ $\Gamma$-converges to $F$}} if for every $u \in X$ we have

\begin{enumerate}
    \item For every sequence $\left\{u_j \right\}_{j \in \mathbb{N}} \subset X$ such as $u_j\rightarrow u$ in $X$ $$F(u) \leq \liminf_{j \rightarrow \infty}F_{j}(u_j);$$
    \item For every $u \in X$, exists $\left\{u_j \right\}_{j \in \mathbb{N}} \subset X$ converging to $u$ such that
    $$F(u) \geq \limsup_{j \rightarrow \infty}F_{j}(u_j).$$
\end{enumerate}

The functional \textbf{\textit{$F$ is the $\Gamma$-limit of the sequence $\left\{F_j \right\}_{j \in \mathbb{N}}$}} and it is denoted by $F_j \stackrel{\Gamma}{\rightarrow} F$ and $$F = \Gamma- \lim_{j \rightarrow \infty} F_j.$$
\end{defi}

Is important to mention that the main feature of the $\Gamma$-convergence is the convergence of minima of suitable functionals, as  proved in \cite[Theorem 6.6]{FBS} and stated below.
\begin{thm}
Let $(X, d)$ be a metric space and let $F,Fj:X\longrightarrow \overline{\mathbb{R}}, \ j \in \mathbb{N}$, be such that $F_j\stackrel{\Gamma}{\rightarrow} F$. Assume that for each $j\in \mathbb{N}$ there exist $u_j \in X$ such that $F_j(u_j) =\inf_{X} F_j$ and suppose that the sequence $\left\{u_j\right\}_{j\in \mathbb{N}} \subset X$ is precompact.
Then every accumulation point of $\left\{u_j\right\}_{j\in \mathbb{N}}$ is a minimum of F and $$\inf_{X} F =  \lim_{j \rightarrow \infty} \inf_{X} F_j.$$
\end{thm}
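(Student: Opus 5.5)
The statement is the abstract ``convergence of minima'' property of $\Gamma$-convergence. The plan is to combine the two defining inequalities of $\Gamma$-convergence with the minimality of each $u_j$, and then pass to a subsequence along an accumulation point.

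\textbf{Step 1 (upper bound).} Fix an arbitrary $v\in X$. By item 2 of the definition there is a recovery sequence $v_j\to v$ with $\limsup_j F_j(v_j)\le F(v)$. Since $F_j(u_j)=\inf_X F_j\le F_j(v_j)$, we get
\[
\limsup_{j\to\infty}\inf_X F_j\le F(v).
\]
Taking the infimum over $v$ gives
\[
\limsup_{j\to\infty}\inf_X F_j\le \inf_X F.
\]

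\textbf{Step 2 (lower bound along accumulation points).} Let $u$ be an accumulation point of $\{u_j\}$; precompactness guarantees that one exists. Choose a subsequence $u_{j_k}\to u$. The liminf inequality in item 1 of the definition is stated for full sequences, so it has to be transferred to the subsequence. To do this, I will define an auxiliary sequence $w_j=u_{j_k}$ for $j_k\le j<j_{k+1}$. Then $w_j\to u$, and item 1 yields $F(u)\le\liminf_j F_j(w_j)$.

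This transfer is the step I expect to be the only real obstacle. Since $F_j(w_j)$ need not equal $\inf_X F_j$ for $j\neq j_k$, the padding above does not directly relate $F_j(w_j)$ to the minima. Instead I will use the standard fact that $\Gamma$-convergence passes to subsequences: $F_{j_k}\stackrel{\Gamma}{\rightarrow}F$. This follows from the sequential characterization of the $\Gamma$-$\liminf$ via neighbourhoods in the metric space $X$. Concretely, $\Gamma\text{-}\liminf_j F_j(u)=\sup_{\varepsilon>0}\liminf_j\inf_{B_\varepsilon(u)}F_j$, and passing to a subsequence can only increase a $\liminf$. Hence
\[
F(u)\le\liminf_{k\to\infty}F_{j_k}(u_{j_k})=\liminf_{k\to\infty}\inf_X F_{j_k}.
\]

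\textbf{Step 3 (conclusion).} Combining Steps 1 and 2 gives
\[
\inf_X F\le F(u)\le\liminf_k\inf_X F_{j_k}\le\limsup_j\inf_X F_j\le\inf_X F.
\]
So all these quantities coincide. In particular $F(u)=\inf_X F$, which says that $u$ is a minimum of $F$. To obtain the full limit $\inf_X F=\lim_j\inf_X F_j$, I will apply Step 2 to an arbitrary subsequence realizing $\liminf_j\inf_X F_j$. By precompactness that subsequence has a further convergent subsequence, and the chain above then gives $\liminf_j\inf_X F_j\ge\inf_X F$, which together with Step 1 yields the limit.
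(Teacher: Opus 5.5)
Your proof is correct. It is the standard argument for this fact, which the paper does not prove itself but simply quotes from Bonder and Salort. One simplification: the obstacle you anticipate in Step 2 is not real. With your padded sequence $w_j$, item 1 already gives $F(u)\le\liminf_j F_j(w_j)\le\liminf_k F_{j_k}(w_{j_k})=\liminf_k\inf_X F_{j_k}$, because a $\liminf$ along a subsequence dominates the full one. So nothing about $F_j(w_j)$ for $j\neq j_k$ is needed, and you can drop the detour through the neighbourhood characterization of the $\Gamma$-$\liminf$.
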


After these, let us define  
$$
\displaystyle \mathcal{J}_s(u) = \left\{\begin{array}{lll}I_1(u)  & u \in W_0^sL^\Phi(\Omega);&  \\
\displaystyle +\infty, &\text{otherwise}&
\end{array}
\right.
\qquad\mbox{and}\qquad \ \ \ 
\displaystyle \mathcal{J}(u) = \left\{\begin{array}{lll}\rho_\Psi(|\nabla u|)  & u \in W_0^1L^{\tilde{\Phi}}(\Omega);&  \\
\displaystyle +\infty, &\text{otherwise}&
\end{array}
\right.
$$
and point out that $\mathcal{J}_s\stackrel{\Gamma}{\to} \mathcal{J}$, as done in the \cite[Theorem 6.5]{FBS}. So, by using this convergence together with the fact that  $I_2$ is continuous in $L^\Phi(\Omega)$ (see Lemma \ref{I2continuous}), we have that
$\mathcal{F}_s\stackrel{\Gamma}{\to} \mathcal{F}$ as well, as done in \cite{FBS}, where 
 $$\displaystyle \mathcal{F}_s (u) = \mathcal{J}_s
(u) - I_2(u) \ \ \ \text{and} \ \ \   \mathcal{F} (u) = \mathcal{J}
(u) -I_2(u).$$

As proved in  Chapter \ref{cap-existence}, the weak solution of Problem \eqref{Pa} $u=u_s \in W_0^{1,\Phi}(\Omega)$ is the unique  global minimizer of $\mathcal{F}_s$, that is, $$\displaystyle \mathcal{F}_s (u_s) = \inf_{v \in L^\Phi(\Omega)} \mathcal{F}_s (v),$$
so that we are in the same context of \cite[Lemma 6.7]{FBS}.

As a consequence of the above structure and Theorem \ref{caseofasequence}, we have the following result.
\begin{thm}
Let $0 < s_j \uparrow 1$ and $\Omega \subset \mathbb{R}^N$ be a open bounded subset. Given $j \in \mathbb{N}$, let $u_j \in L^\Phi(\Omega)$ be the minimum of $\mathcal{F}_{s_j}$. Then $\{u_j \}_{j \in \mathbb{N}} \subset L^\Phi(\Omega)$ is precompact.
\end{thm}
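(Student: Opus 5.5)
The plan is to reduce the claim to Theorem \ref{caseofasequence}. That theorem needs two uniform bounds along the sequence: one on the fractional modular $(1-s_j)\iint\Phi(|D_{s_j}u_j|)\,\mathrm{d}\mu$ and one on $\int_\Omega \Phi(|u_j|)\,\mathrm{d}x$. Once both are in hand, it gives a subsequence converging in $L^\Phi_{loc}(\mathbb{R}^N)$. Since every $u_j$ vanishes outside the bounded set $\Omega$, this is convergence in $L^\Phi(\Omega)$, and that is precompactness.

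\textbf{Step 1: uniform bound on the fractional modular.} I use that $u_j$ minimizes $\mathcal{F}_{s_j}$. By Lemma \ref{negative-sing}, $\mathcal{F}_{s_j}(u_j)=I(u_j)<0$, so $u_j\in D_\Phi$ and
$$I_1(u_j)\leq \frac{1}{1-\gamma}\int_\Omega |u_j|^{1-\gamma}\,\mathrm{d}x .$$
Now repeat the chain \eqref{desig}. First bound $|u_j|^{1-\gamma}$ by $T+|u_j|\chi_{[|u_j|>T]}$. On $[|u_j|>T]$ use $t<\Phi(t)/(2C)$. Then apply the Poincaré inequality of Proposition \ref{Poincare}, which is available because $\Phi\in\Delta_2$. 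This gives
$$I_1(u_j)\leq c\,T|\Omega|+\tfrac12 I_1(u_j).$$
The main point to check is that $C$, and hence $T$, do not depend on $s$. Proposition \ref{Poincare} states this independence precisely under $\Delta_2$. The constant $1/(1-\gamma)$ is absorbed by enlarging $T$. Since $I_1(u_j)<\infty$, I can rearrange to get $\sup_j I_1(u_j)\leq 2cT|\Omega|$.

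\textbf{Step 2: uniform bound on $\rho_\Phi(u_j)$.} Apply Proposition \ref{Poincare} again: $\rho_\Phi(u_j)\leq C\,I_1(u_j)$ with $C$ independent of $s_j$, so $\rho_\Phi(u_j)$ is bounded uniformly in $j$.

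\textbf{Step 3: conclusion.} With both bounds, Theorem \ref{caseofasequence} yields $u\in L^\Phi(\Omega)$ and a subsequence $u_{j_k}\to u$ in $L^\Phi_{loc}(\mathbb{R}^N)$. All $u_{j_k}$ are supported in $\overline\Omega$, so $u$ is as well. Taking a ball containing $\Omega$, local convergence becomes $u_{j_k}\to u$ in $L^\Phi(\Omega)$. The same argument applies to any subsequence of $(u_j)$, so $\{u_j\}$ is precompact in $L^\Phi(\Omega)$.

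I expect the main obstacle to be the $s$-uniformity of the constants in Step 1, i.e.\ that the Poincaré constant does not deteriorate as $s\uparrow 1$. This is exactly why $\Delta_2$ is assumed in Theorem \ref{gammaconvergence}. A secondary point is matching modulars with norms: Theorem \ref{caseofasequence} is phrased in terms of modulars, while precompactness is in the Luxemburg norm. Under $\Delta_2$, modular convergence and norm convergence coincide, so I would invoke that equivalence if needed.
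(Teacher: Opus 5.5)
Your proposal is correct and follows the paper's route. The bound on $(1-s_j)\iint\Phi(|D_{s_j}u_j|)\,\mathrm{d}\mu$ is the chain \eqref{desig}, which rests on $I(u_j)<0$ and the $s$-independent Poincaré constant available under $\Delta_2$; Poincaré then bounds $\rho_\Phi(u_j)$, and Theorem \ref{caseofasequence} yields the convergent subsequence, with your explicit absorption of the factor $1/(1-\gamma)$ into $T$ being slightly more careful than the paper's own write-up.
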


As a corollary of the results above, we obtain our statement.
\begin{thm}\label{limit}
Let $\Phi$ in the hypotheses of Theorem \ref{gammaconvergence} and $u_s \in L^\Phi(\Omega)$ the minimun of $\mathcal{F}_{s_j}$. Then there exists $u \in L^\Phi(\Omega)$ such that $$u=\lim_{s \uparrow 1}u_s \ \text{in} \ L^\Phi(\Omega) \ \ \ \text{and} \ \ \ \mathcal{F}(u) = \min_{v \in L^\Phi(\Omega)} \mathcal{F}(v).$$
\end{thm}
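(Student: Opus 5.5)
The plan is to combine the three ingredients assembled just before the statement: the $\Gamma$-convergence $\mathcal{F}_{s}\stackrel{\Gamma}{\to}\mathcal{F}$ in $L^\Phi(\Omega)$, the precompactness of minimizers (the theorem immediately preceding), and the abstract convergence-of-minima theorem quoted from \cite[Theorem 6.6]{FBS}. First, take an arbitrary sequence $0<s_j\uparrow 1$ and let $u_j:=u_{s_j}$ be the global minimizer of $\mathcal{F}_{s_j}$ given by Section \ref{cap-existence}. Equivalently, $u_j$ is the unique weak solution of $(P_{s_j,\Phi})$. By the precompactness theorem, every subsequence of $(u_j)$ has a further subsequence converging in $L^\Phi(\Omega)$ to some $u$. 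By the abstract $\Gamma$-convergence theorem applied in $X=L^\Phi(\Omega)$, such a limit $u$ is a minimizer of $\mathcal{F}$, and $\inf\mathcal{F}=\lim_j\inf\mathcal{F}_{s_j}$. In particular $\mathcal{F}(u)<\infty$, so $u\in W^1_0L^\Phi(\Omega)$, since $\Psi$ is equivalent to $\Phi$ by the proposition from \cite[Proposition 2.16]{FBS}.

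Second, I will show that $\mathcal{F}$ has exactly one minimizer. The functional $\mathcal{F}(v)=\int_\Omega\Psi(|\nabla v|)\,dx-\frac{1}{1-\gamma}\int_\Omega|v|^{1-\gamma}dx$ is even, so I may restrict to $v\ge 0$. A minimizer can be replaced by $|u|$, which lies in $W_0^1L^\Phi(\Omega)$ by the local analogue of Lemma \ref{u+}. For nonnegative functions I would use the standard hidden-convexity trick. Given two nonnegative minimizers $u,w$, set $\eta=\big(\frac{u^{1-\gamma}+w^{1-\gamma}}{2}\big)^{1/(1-\gamma)}$. Concavity of $t\mapsto t^{1-\gamma}$ in the right variables makes the singular part affine in $u^{1-\gamma}$. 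For the gradient part, the convexity of $\Psi$ and the pointwise bound $|\nabla\eta|\le \frac12(|\nabla u|+|\nabla w|)$ (a Picone/Díaz–Saa type estimate) should give $\mathcal{F}(\eta)\le\frac12(\mathcal{F}(u)+\mathcal{F}(w))$.

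Since $\Psi$ is strictly convex, equality forces $\nabla u=\nabla w$ a.e., hence $u=w$. If that route becomes messy, the alternative is to run Section \ref{cap-existence} verbatim with $\Psi$ in place of $\Phi$, which is legitimate because $\Psi$ is an $N$-function equivalent to $\Phi$ with $\Psi'(t)=\Phi$-type monotone structure. By Theorem \ref{pricipal-2} this shows that every nonnegative minimizer of $\mathcal{F}$ is a weak solution of $(P_\Psi)$. Uniqueness of the weak solution, argued as in Proposition \ref{unicidadesol}, then identifies the minimizer.

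Finally, since every subsequence of $(u_{s_j})$ has a sub-subsequence converging in $L^\Phi(\Omega)$ to the same limit $u$, the whole family converges: $u_s\to u$ as $s\uparrow1$. Moreover $\mathcal{F}(u)=\min\mathcal{F}$. The remaining point is that the limit is nonnegative and nontrivial, so that it is the positive solution rather than an odd reflection. Nonnegativity follows since $u_s\ge0$ and $L^\Phi$ convergence implies a.e. convergence along subsequences. Nontriviality follows from $\inf\mathcal{F}=\lim\inf\mathcal{F}_{s_j}$ together with $\inf\mathcal{F}<0$, which holds by the same argument as Lemma \ref{negative-sing}(i).

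The main obstacle I expect is the uniqueness of the minimizer of $\mathcal{F}$, i.e., that all accumulation points coincide. This is the only place where strict convexity of $\Psi$ and the sign structure of the singular term must be used carefully in the possibly nonreflexive setting. I would try the route via Theorem \ref{pricipal-2} applied to $\Psi$ first.
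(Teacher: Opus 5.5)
Your skeleton is the paper's. The paper proves Theorem \ref{limit} in one line, as a corollary of $\mathcal{F}_s\stackrel{\Gamma}{\to}\mathcal{F}$ in $L^\Phi(\Omega)$, the precompactness of the minimizers, and the abstract convergence-of-minima theorem. Those ingredients only show that accumulation points of $(u_{s_j})$ minimize $\mathcal{F}$. Your sub-subsequence argument, together with uniqueness of the nonnegative minimizer, is exactly what is needed for convergence of the whole family $u_s$. The paper leaves this implicit and brings in uniqueness (Theorem \ref{pricipal-2} applied with $\Psi$) only in the conclusion of Theorem \ref{gammaconvergence}.

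Your second route is sound. Lemma \ref{Domin}, Lemma \ref{solpos}, Corollary \ref{igual} and Proposition \ref{propoprincipal} use only that $u$ is a nonnegative global minimizer of finite energy. So every such minimizer of $\mathcal{F}$ solves $(P_\Psi)$, and Proposition \ref{unicidadesol} then identifies them. What makes this legitimate is not the equivalence $\Psi\sim\Phi$, but that $\Psi$ itself satisfies $(\phi_1)$--$(\phi_2)$. This follows from $\Psi'(t)=t^{-1}\int_{\mathbb{S}^{N-1}}\Phi(t|z_N|)\,\mathrm{d}S_z$ and the fact that $\Phi(\tau)/\tau$ increases from $0$ to $\infty$.

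Your first route, however, rests on a false inequality. From $\eta^{1-\gamma}=\frac12(u^{1-\gamma}+w^{1-\gamma})$ one gets
\[
\nabla\eta=\frac12\Big(\frac{\eta}{u}\Big)^{\gamma}\nabla u+\frac12\Big(\frac{\eta}{w}\Big)^{\gamma}\nabla w .
\]
Wherever $0<u<w$, $\nabla w=0$ and $\nabla u\neq 0$, one has $\eta>u$. Hence $|\nabla\eta|=\frac12(\eta/u)^{\gamma}|\nabla u|>\frac12(|\nabla u|+|\nabla w|)$, and the ratio blows up as $u\to 0$ at fixed $w$. Hidden convexity of D\'iaz--Saa type uses the power mean of exponent $p$ for a $p$-homogeneous gradient term, not the exponent $1-\gamma$ of the singular term.

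No hidden convexity is needed anyway: on nonnegative functions $\mathcal{F}$ is already convex, because $t\mapsto-\frac{1}{1-\gamma}t^{1-\gamma}$ is convex on $[0,\infty)$. Suppose $u,w\ge 0$ both minimize $\mathcal{F}$ and set $z=\frac{u+w}{2}$. Then $\int_\Omega\Psi(|\nabla z|)\,\mathrm{d}x\le\frac12\int_\Omega\Psi(|\nabla u|)\,\mathrm{d}x+\frac12\int_\Omega\Psi(|\nabla w|)\,\mathrm{d}x$, and $z^{1-\gamma}>\frac12(u^{1-\gamma}+w^{1-\gamma})$ on $\{u\neq w\}$. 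Hence $\mathcal{F}(z)<\min\mathcal{F}$ unless $u=w$ a.e. The strictness comes from the singular term, not from strict convexity of $\Psi$, and this avoids the weak-solution machinery of Section \ref{cap-existence} altogether.
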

\textbf{Proof of Theorem \ref{gammaconvergence}-Conclusion:} By Theorem \ref{pricipal-2}, Problem \eqref{Psilaplacian} has a unique positive weak solution $u \in W_0^1  L^\Phi(\Omega)$, which is the critical point of the functional $\mathcal{F}$. Then, the proof of the Theorem \ref{gammaconvergence} follows by applying  Theorem \ref{limit}.

\section{Conclusion}
In this paper, we need to consider the hypotheses  $(\phi_1)$, $(\phi_2)$ and $\Phi\in \Delta_2$ to prove Theorem \ref{gammaconvergence}. We believe that Theorem \ref{gammaconvergence} will continue valid without assuming the hypothesis $\Phi\in \Delta_2$ by changing the strong convergence to the modular convergence. Spite of classical arguments do not work in our case anymore, we also believe that the solutions obtained by Theorems \ref{pricipal-1} and \ref{pricipal-2} are in  $L^\infty(\Omega)$ just assuming $(\phi_1)$, $(\phi_2)$ and $\Phi\in \Delta_2$.

\section{Declarations}
{\bf Author Contributions:} Marcos L. M. Carvalho, Luana C. M. Lima, Carlos A. P. Santos, and Maxwell L. Silva contributed equally to this work and should be considered co-first authors.
\\

{\bf Ethical Approval:} Not applicable.
\\

{\bf Competing interests:} The authors declare no competing interests.
\\

{\bf Funding:} The first author was also partially supported by CNPq with the grant 300411/2025-1.\\
The second author was also partially supported by CAPES with the grant 88882.386249/2019-01.\\
The third author was also partially supported by CNPq with the grant 311562/2020-5, and FAPDF under the grant 00193.00001133/2021-80.
\\

{\bf Data Availability:} No datasets were generated or analyzed during the current study

\end{document}